\documentclass[a4paper, reqno, 11pt]{amsart}

\usepackage{accents}
\usepackage{mathtools,amssymb,stmaryrd}

\usepackage[colorlinks]{hyperref}
\hypersetup{
  pdftitle   = {Fukaya categories of orbifold surfaces in representation theory},
  pdfauthor  = {Severin Barmeier, Zhengfang Wang},
  pdfcreator = {},
  linkcolor  = blue,
  citecolor  = blue,
  urlcolor   = blue
}

\usepackage{xcolor}
\definecolor{stopcolour}{HTML}{007A99}
\definecolor{arccolour}{HTML}{00AB57}
\definecolor{ribboncolour}{HTML}{F7971C}

\newcommand{\mucirc}{\accentset{\circ}{\mu}}
\newcommand{\mutimes}{\accentset{\times}{\mu}}
\newcommand{\muotimes}{\accentset{\otimes}{\mu}}
\newcommand{\mubar}{\bar\mu}

\usepackage{tikz}
\usetikzlibrary{backgrounds,matrix,arrows.meta,decorations.markings,decorations.pathmorphing,shapes,positioning,calc}
\tikzset{>=stealth}
\newcommand{\W}{\mathcal W \mkern-2mu}
\renewcommand{\epsilon}{\varepsilon}
\renewcommand{\to}{\mathrel{\tikz[baseline]\draw[ ->,line width=.4pt] (0ex,0.65ex) -- (3ex,0.65ex);}}
\renewcommand{\mapsto}{\mathrel{\tikz[baseline]\draw[|->,line width=.4pt] (0pt,0.65ex) -- (3ex,0.65ex);}}

\renewcommand{\rightrightarrows}{\mathrel{\tikz[baseline]{\draw[->,line width=.4pt] (0ex,0.25ex) -- (3ex,0.25ex); \draw[->] (0ex,1.05ex) -- (3ex,1.05ex);}}}

\renewcommand{\leftrightarrow}{\mathrel{\tikz[baseline]\draw[<->,line width=.4pt] (0ex,0.65ex) -- (3.5ex,0.65ex);}}

\newcommand{\toarg}[1]{\mathrel{\tikz[baseline]\path[->,line width=.4pt] (0ex,0.65ex) edge node[above=-.4ex, overlay, font=\scriptsize] {$#1$} (3.5ex,.65ex);}}

\usepackage{makecell}

\newtheorem{theorem}[equation]{Theorem}
\newtheorem*{theorem*}{Theorem}
\newtheorem{proposition}[equation]{Proposition}
\newtheorem{lemma}[equation]{Lemma}
\newtheorem*{lemma*}{Lemma}

\newtheorem{conjecture}[equation]{Conjecture}
\newtheorem*{corollary*}{Corollary}
\theoremstyle{definition}
\newtheorem{definition}[equation]{Definition}
\newtheorem{example}[equation]{Example}

\newtheorem{notation}[equation]{Notation}
\theoremstyle{remark}
\newtheorem{remark}[equation]{Remark}

\DeclareMathOperator{\End}{End}

\renewcommand{\H}{\mathrm{H}}

\DeclareMathOperator{\HH}{HH}
\DeclareMathOperator{\hocolim}{hocolim}
\DeclareMathOperator{\Hom}{Hom}

\newcommand{\id}{\mathrm{id}}

\DeclareMathOperator{\per}{per}

\newcommand{\s}{\mathrm{s}}
\DeclareMathOperator{\Sing}{Sing}

\DeclareMathOperator{\tw}{tw}

\begin{document}

\title[Fukaya categories of orbifolds in representation theory]{Fukaya categories of orbifold surfaces in representation theory}

\author{Severin Barmeier}
\address{Mathematical Institute, University of Cologne, Weyertal 86-90, 50931 Köln, Germany}
\email{s.barmeier@gmail.com}

\author{Zhengfang Wang}
\address{Nanjing University, School of Mathematics, Nanjing 210093, Jiangsu, China}
\email{zhengfangw@nju.edu.cn}

\subjclass{18G70, 53D37, 16E35}

\keywords{partially wrapped Fukaya categories, orbifold surfaces, orbit categories, skew-gentle algebras}

\begin{abstract}
We give an introduction to partially wrapped Fukaya categories of surfaces with orbifold singularities. Dissecting an orbifold surface $\mathbf S$ into polygons, certain dissections give rise to formal generators, inducing a triangulated equivalence between the derived Fukaya category of $\mathbf S$ and the perfect derived category of a graded associative algebra. This provides a geometric means for obtaining associative algebras --- conjecturally all --- which are derived equivalent to skew-gentle algebras.

We include a new perspective on the partially wrapped Fukaya category of an orbifold disk which serves as a local model for the Fukaya categories of general orbifold surfaces. This perspective yields an equivalence between the perfect derived category of a quiver of type $\mathrm D_{n+1}$ and the perfect derived category of a graded quiver of type $\widetilde{\mathrm A}_{n-1}$, the latter being equipped with quadratic zero relations and a nontrivial A$_\infty$ structure. This equivalence elucidates the relationship between skew-gentle algebras and orbifold surfaces, and the role of deformation theory in this relationship.
\end{abstract}

\maketitle

\section{Introduction}

\subsection{Gentle algebras and Fukaya categories of surfaces}

The study of gentle and skew-gentle algebras has a long and rich history in representation theory of algebras. Gentle algebras were introduced by Assem and Skowroński \cite{assemskowronski} for their tractable representation theory, with a generalization to skew-gentle algebras given by Geiss and de la Peña \cite{geisspena}. Gentle and skew-gentle algebras belong to the few known classes of algebras which are not only tame, but also derived tame. This makes it possible to classify the indecomposable objects in the bounded derived category of a gentle or skew-gentle algebra \cite{bekkertmerklen,burbandrozd}.

The trivial extension of a gentle algebra is a symmetric special biserial algebra and hence a Brauer graph algebra. Schroll \cite{schroll} showed how the corresponding Brauer graph can be constructed directly from the gentle algebra: the maximal paths in a gentle algebra $A$ (together with certain idempotents) give rise to vertices of its associated Brauer graph $G$, vertices of the underlying quiver of $A$ give rise to edges of $G$, and arrows of the quiver of $A$ give rise to a linear ordering of the half-edges at each vertex of $G$. Extending this linear order to a cyclic order, one obtains a ribbon graph which can be viewed as a deformation retract of a compact oriented surface with boundary. It turns out that much of the representation theory of a gentle algebra can be encoded geometrically on this surface. In particular, the surface serves as a geometric model of the bounded derived category of a gentle algebra \cite{opperplamondonschroll}. 

The study of gentle algebras received a new impetus through the work of Haiden, Katzarkov and Kontsevich \cite{haidenkatzarkovkontsevich} who showed that the partially wrapped Fukaya category of a {\it graded} surface admits a formal generator whose endomorphism algebra is a {\it graded} gentle algebra. The surface, including the boundary stops, coincides with that obtained via the Brauer graph \cite{schroll} but is additionally equipped with a {\it line field} which endows the Fukaya category -- and any associated gentle algebra -- with a $\mathbb Z$-grading. Conversely, for an arbitrary grading of a gentle algebra, there always exists a line field on the surface inducing the given grading \cite{lekilipolishchuk2}. We usually denote the data of the surface $S$, the set $\Sigma \subset \partial S$ of stops and line field $\eta$ by $\mathbf S = (S, \Sigma, \eta)$.

The general theory of Fukaya categories drastically simplifies in dimension $2$ and the symplectic viewpoint thus provides an accessible arsenal of geometric techniques that inform and illuminate the representation theory of gentle algebras. For example, Lekili and Polishchuk \cite{lekilipolishchuk2} show how the classical Avella-Alaminos--Geiss invariant of a gentle algebra \cite{avellaalaminosgeiss} can be obtained from the winding numbers of the line field along the boundary components of the surface. In fact, this invariant can be enhanced to a complete derived invariant \cite{lekilipolishchuk2,amiotplamondonschroll,jinschrollwang}. 

The perspective of Fukaya categories naturally leads to a paradigm shift in the study of gentle algebras and their derived categories. The relationship
\begin{equation}
\label{eq:correspondencegentle}
\begin{tikzpicture}[x=1em,y=1em,baseline=-2.25ex]
\node[font=\footnotesize, align=center, anchor=north] (L) at (0, 0) {\normalsize $\mathbf S$ \\ smooth graded surface \\ with stops in the boundary};
\node[font=\footnotesize, align=center, anchor=north] (R) at (10, 0) {\normalsize $A$ \\ graded gentle \\ algebra};
\path[<->, line width=.5pt] (4,-.75) edge (6.5,-.75);
\end{tikzpicture}
\end{equation}
is then witnessed by an equivalence
\begin{align}\label{align:equivalencegentle}
\W (\mathbf S) \simeq \tw (A)^\natural
\end{align}
of pretriangulated A$_\infty$ categories. Here $\W (\mathbf S)$ is the partially wrapped Fukaya category of $\mathbf S$ and $\tw (A)$ is the category of twisted complexes over $A$ (viewed as a DG category) and $^\natural$ denotes the idempotent completion. Twisted complexes play a central role in the study of DG enhancements of triangulated categories \cite{bondalkapranov,keller0} and in symplectic geometry \cite{kontsevich1,seidel2}. The equivalence \eqref{align:equivalencegentle} thus places the theory of gentle algebras in the framework of DG and A$_\infty$ categories and their homotopy theory.

Passing to homotopy categories, the equivalence \eqref{align:equivalencegentle} induces a triangulated equivalence
\begin{equation}
\label{eq:triangulatedequivalence}
\mathrm D \W (\mathbf S) \simeq \per (A)
\end{equation}
between the ``derived'' partially wrapped Fukaya category $\mathrm D \W (\mathbf S) := \H^0 \W (\mathbf S)$ of $\mathbf S$ and the perfect derived category $\per (A) \simeq \H^0 \tw (A)^\natural$ of $A$. Fukaya and Fukaya--Seidel categories are often constructed as twisted completions of a collection of generating Lagrangians \cite{kontsevich1,seidel1}. Extending the notion of (bounded) twisted complexes to unbounded twisted complexes \cite{annologvinenko}, it is possible to classify indecomposable objects and morphisms in the bounded derived category of any finite-dimensional graded gentle algebra \cite{opperplamondonschroll}.

There is a vast and rapidly growing literature on gentle (and skew-gentle) algebras and we apologize for all sins of omission that are bound to occur in our cursory account. We refer to the excellent recent survey \cite{schrollicm} for further references.

\subsection{Gluing properties of gentle algebras, ribbon graphs and Fukaya categories}
\label{subsection:gluingproperties}

It turns out that the equivalence $\W (\mathbf S) \simeq \tw (A)^\natural$ \eqref{align:equivalencegentle} can be assembled from simple pieces via ``gluing'' which manifests itself in several equivalent ways: 
\begin{description}
\item[{\bf algebraic}] gluing the quiver with relations for the gentle algebra $A$ by identifying pairs of vertices in the building blocks $\mathrm A_n$ and $\widetilde{\mathrm A}_{n-1}$ (for $n \geq 1$), both considered with full quadratic monomial relations
\item[{\bf topological}] gluing the ribbon graph of $A$ by identifying pairs of edges in the building blocks consisting of one vertex of valency $n$ and $n$ edges with either a linear order (corresponding to $\mathrm A_n$) or a cyclic order (corresponding to $\widetilde{\mathrm A}_{n-1}$)
\item[{\bf symplectic}] gluing the graded surface $\mathbf S$ out of graded ``sectors'' which are polygons with exactly one stop in the boundary or a ``puncture'' modelled by a boundary component with an entire boundary stop.
\end{description}
We now give some more details for each of these perspectives.

\paragraph{Gluing properties of gentle algebras}

Throughout we work over a field $\Bbbk$ of characteristic $0$. Recall that a (possibly infinite-dimensional) {\it gentle algebra} $A = \Bbbk Q / I$ is the path algebra of a finite quiver $Q$ modulo an ideal $I$ generated by quadratic monomial relations such that at each vertex of $Q$ there are at most two incoming and two outgoing arrows, and the relations are such that for every arrow $q \in Q_1$, there is at most one precomposable arrow $p$ such that $q p \in I$ and at most one precomposable arrow $p'$ such that $q p' \not\in I$, with an analogous condition for postcomposable arrows \cite{assemskowronski}. A {\it grading} on $A$ is simply given by a function $\lvert - \rvert \colon Q_1 \to \mathbb Z$ which assigns to an arrow $p \in Q_1$ its degree $\lvert p \rvert \in \mathbb Z$ and we may view $A$ as DG algebra with trivial differential.

It is a telling observation that any graded gentle algebra $A = \Bbbk Q / I$ can itself be viewed as the homotopy colimit of a diagram of graded quivers of types $\mathrm A_n$ and $\widetilde{\mathrm A}_{n-1}$ (for $n \geq 1$), linearly resp.\ cyclically oriented and both considered with full quadratic monomial relations \cite{haidenkatzarkovkontsevich,barmeierschrollwang}. In other words, any graded gentle algebra can be ``built'' out of the following building blocks:
\begin{equation}
\label{eq:buildingblocks}
\begin{tikzpicture}[x=2em,y=2em,decoration={markings,mark=at position 0.99 with {\arrow[black]{Stealth[length=4.8pt]}}}, baseline=-1.3em]
\begin{scope}[yshift=1.5em]
\node[font=\footnotesize] at (-1.25,0) {$n$\strut};
\end{scope}
\node[font=\footnotesize] at (-1.25,0) {$\mathrm A_n$};
\begin{scope} 
\begin{scope}[yshift=1.5em]
\node[font=\footnotesize] at (0,0) {$1$\strut};
\end{scope}
\draw[line width=.5pt, fill=black] (0,0) circle(.1em);
\end{scope}
\begin{scope}[xshift=2em] 
\begin{scope}[yshift=1.5em]
\node[font=\footnotesize] at (.375,0) {$2$\strut};
\end{scope}
\draw[line width=.5pt, fill=black] (0,0) circle(.1em);
\draw[line width=.5pt, fill=black] (.75,0) circle(.1em);
\node[circle, minimum size=.1em, outer sep=2pt, inner sep=0] (0) at (0,0) {};
\node[circle, minimum size=.1em, outer sep=2pt, inner sep=0] (1) at (.75,0) {};
\path[-stealth] (0) edge (1);
\end{scope}
\begin{scope}[xshift=5.25em] 
\begin{scope}[yshift=1.5em]
\node[font=\footnotesize] at (.75,0) {$3$\strut};
\end{scope}
\draw[line width=.5pt, fill=black] (0,0) circle(.1em);
\draw[line width=.5pt, fill=black] (.75,0) circle(.1em);
\draw[line width=.5pt, fill=black] (1.5,0) circle(.1em);
\node[circle, minimum size=.1em, outer sep=2pt, inner sep=0] (0) at (0,0) {};
\node[circle, minimum size=.1em, outer sep=2pt, inner sep=0] (1) at (.75,0) {};
\node[circle, minimum size=.1em, outer sep=2pt, inner sep=0] (2) at (1.5,0) {};
\path[-stealth] (0) edge (1) (1) edge (2);
\draw[line cap=round, dash pattern=on 0pt off 1.2pt, line width=.6pt] (.75,0) +(-.65em,.12em) to[bend left=62, looseness=1.22] +(.55em,.12em);
\end{scope}
\begin{scope}[xshift=10em] 
\begin{scope}[yshift=1.5em]
\node[font=\footnotesize] at (.75+.375,0) {$4$\strut};
\end{scope}
\draw[line width=.5pt, fill=black] (0,0) circle(.1em);
\draw[line width=.5pt, fill=black] (.75,0) circle(.1em);
\draw[line width=.5pt, fill=black] (1.5,0) circle(.1em);
\draw[line width=.5pt, fill=black] (2.25,0) circle(.1em);
\node[circle, minimum size=.1em, outer sep=2pt, inner sep=0] (0) at (0,0) {};
\node[circle, minimum size=.1em, outer sep=2pt, inner sep=0] (1) at (.75,0) {};
\node[circle, minimum size=.1em, outer sep=2pt, inner sep=0] (2) at (1.5,0) {};
\node[circle, minimum size=.1em, outer sep=2pt, inner sep=0] (3) at (2.25,0) {};
\path[-stealth] (0) edge (1) (1) edge (2) (2) edge (3);
\draw[line cap=round, dash pattern=on 0pt off 1.2pt, line width=.6pt] (.75,0) +(-.65em,.12em) to[bend left=62, looseness=1.22] +(.55em,.12em) (1.5,0) +(-.65em,.12em) to[bend left=62, looseness=1.22] +(.55em,.12em);
\end{scope}
\begin{scope}[xshift=16.25em] 
\begin{scope}[yshift=1.5em]
\node[font=\footnotesize] at (1.5,0) {$5$\strut};
\end{scope}
\draw[line width=.5pt, fill=black] (0,0) circle(.1em);
\draw[line width=.5pt, fill=black] (.75,0) circle(.1em);
\draw[line width=.5pt, fill=black] (1.5,0) circle(.1em);
\draw[line width=.5pt, fill=black] (2.25,0) circle(.1em);
\draw[line width=.5pt, fill=black] (3,0) circle(.1em);
\node[circle, minimum size=.1em, outer sep=2pt, inner sep=0] (0) at (0,0) {};
\node[circle, minimum size=.1em, outer sep=2pt, inner sep=0] (1) at (.75,0) {};
\node[circle, minimum size=.1em, outer sep=2pt, inner sep=0] (2) at (1.5,0) {};
\node[circle, minimum size=.1em, outer sep=2pt, inner sep=0] (3) at (2.25,0) {};
\node[circle, minimum size=.1em, outer sep=2pt, inner sep=0] (4) at (3,0) {};
\path[-stealth] (0) edge (1) (1) edge (2) (2) edge (3) (3) edge (4);
\draw[line cap=round, dash pattern=on 0pt off 1.2pt, line width=.6pt] (.75,0) +(-.65em,.12em) to[bend left=62, looseness=1.22] +(.55em,.12em) (1.5,0) +(-.65em,.12em) to[bend left=62, looseness=1.22] +(.55em,.12em) (2.25,0) +(-.65em,.12em) to[bend left=62, looseness=1.22] +(.55em,.12em);
\end{scope}
\begin{scope}[xshift=21.5em] 
\begin{scope}[yshift=1.5em]
\node[font=\footnotesize] at (1.25,0) {$\dotsb$\strut};
\end{scope}
\end{scope}
\begin{scope}[yshift=-2.5em]
\node[font=\footnotesize] at (-1.25,0) {$\widetilde{\mathrm A}_{n-1}$};
\begin{scope}
\draw[line width=.5pt, fill=black] (0,.35) circle(.1em);
\node[circle, minimum size=.1em, outer sep=2pt, inner sep=0] (0) at (0,.35) {};
\path[-stealth] (0) edge[out=235, in=305, looseness=24] (0);
\draw[line cap=round, dash pattern=on 0pt off 1.2pt, line width=.6pt] (0,.35) +(246:.8em) to[bend right=30, looseness=1.22] +(294:.8em);
\end{scope}
\begin{scope}[xshift=2.75em, rotate=90] 
\draw[line width=.5pt, fill=black] (90:.55) circle(.1em);
\draw[line width=.5pt, fill=black] (270:.55) circle(.1em);
\node[circle, minimum size=.1em, outer sep=2pt, inner sep=0] (0) at (270:.55) {};
\node[circle, minimum size=.1em, outer sep=2pt, inner sep=0] (1) at (90:.55) {};
\path[-stealth] (0) edge[bend right=35] (1) (1) edge[bend right=35] (0);
\draw[line cap=round, dash pattern=on 0pt off 1.2pt, line width=.6pt] (270:.55) +(69:.76em) to[bend right=30, looseness=1.1] +(111:.76em) (90:.55) +(69+180:.76em) to[bend right=30, looseness=1.1] +(111+180:.76em);
\end{scope}
\begin{scope}[xshift=6.75em, yshift=-.2em] 
\draw[line width=.5pt, fill=black] (90:.65) circle(.1em);
\draw[line width=.5pt, fill=black] (210:.65) circle(.1em);
\draw[line width=.5pt, fill=black] (330:.65) circle(.1em);
\node[circle, minimum size=.1em, outer sep=2pt, inner sep=0] (0) at (210:.65) {};
\node[circle, minimum size=.1em, outer sep=2pt, inner sep=0] (1) at (330:.65) {};
\node[circle, minimum size=.1em, outer sep=2pt, inner sep=0] (2) at (90:.65) {};
\path[-stealth] (0) edge (1) (1) edge (2) (2) edge (0);
\draw[line cap=round, dash pattern=on 0pt off 1.2pt, line width=.6pt] (90:.65) +(-69:.9em) to[bend left=30, looseness=1.2] +(-111:.9em) (90+120:.65) +(-69+120:.9em) to[bend left=30, looseness=1.2] +(-111+120:.9em) (90-120:.65) +(-69-120:.9em) to[bend left=30, looseness=1.2] +(-111-120:.9em); 
\end{scope}
\begin{scope}[xshift=10.8em] 
\draw[line width=.5pt, fill=black] (0,0) circle(.1em);
\draw[line width=.5pt, fill=black] (.7,.7) circle(.1em);
\draw[line width=.5pt, fill=black] (.7,-.7) circle(.1em);
\draw[line width=.5pt, fill=black] (1.4,0) circle(.1em);
\node[circle, minimum size=.1em, outer sep=2pt, inner sep=0] (0) at (0,0) {};
\node[circle, minimum size=.1em, outer sep=2pt, inner sep=0] (1) at (.7,-.7) {};
\node[circle, minimum size=.1em, outer sep=2pt, inner sep=0] (2) at (1.4,0) {};
\node[circle, minimum size=.1em, outer sep=2pt, inner sep=0] (3) at (.7,.7) {};
\path[-stealth] (0) edge (1) (1) edge (2) (2) edge (3) (3) edge (0);
\draw[line cap=round, dash pattern=on 0pt off 1.2pt, line width=.6pt] (0,0) +(37:.8em) arc[start angle=37, end angle=-37, radius=.8em] (.7,-.7) +(127:.8em) arc[start angle=127, end angle=53, radius=.8em] (1.4,0) +(217:.8em) arc[start angle=217, end angle=143, radius=.8em] (.7,.7) +(-53:.8em) arc[start angle=-53, end angle=-127, radius=.8em];
\end{scope}
\begin{scope}[xshift=19.25em] 
\draw[line width=.5pt, fill=black] (90:.8) circle(.1em);
\draw[line width=.5pt, fill=black] (162:.8) circle(.1em);
\draw[line width=.5pt, fill=black] (234:.8) circle(.1em);
\draw[line width=.5pt, fill=black] (306:.8) circle(.1em);
\draw[line width=.5pt, fill=black] (18:.8) circle(.1em);
\node[circle, minimum size=.1em, outer sep=2pt, inner sep=0] (0) at (162:.8) {};
\node[circle, minimum size=.1em, outer sep=2pt, inner sep=0] (1) at (234:.8) {};
\node[circle, minimum size=.1em, outer sep=2pt, inner sep=0] (2) at (306:.8) {};
\node[circle, minimum size=.1em, outer sep=2pt, inner sep=0] (3) at (18:.8) {};
\node[circle, minimum size=.1em, outer sep=2pt, inner sep=0] (4) at (90:.8) {};
\path[-stealth] (0) edge (1) (1) edge (2) (2) edge (3) (3) edge (4) (4) edge (0);
\draw[line cap=round, dash pattern=on 0pt off 1.2pt, line width=.6pt] (90:.8) +(-135:.75em) to[bend right=32, looseness=1.2] +(-45:.75em) (90+72:.8) +(-135+72:.75em) to[bend right=32, looseness=1.2] +(-45+72:.75em) (90+144:.8) +(-135+144:.75em) to[bend right=32, looseness=1.2] +(-45+144:.75em) (90+216:.8) +(-135+216:.75em) to[bend right=32, looseness=1.2] +(-45+216:.75em) (90-72:.8) +(-135-72:.75em) to[bend right=32, looseness=1.2] +(-45-72:.75em);
\end{scope}
\end{scope}
\end{tikzpicture}
\end{equation}
For each of these quivers $Q$, the dotted lines connecting two composable arrows indicate that the composition of these arrows belongs to the ideal $I$, i.e.\ the corresponding composition is zero in the quotient algebra $\Bbbk Q / I$. Each building block may be graded by assigning a degree to each arrow.

\begin{example}
To give an example of a gluing of the building blocks described in \eqref{eq:buildingblocks}, consider the algebras $A$ and $A'$ given by the building blocks of type $\mathrm A_4$ and $\widetilde{\mathrm A}_3$, respectively, as in \eqref{eq:buildingblocks}. For any chosen grading, the algebras $A$ and $A'$ can be glued along a vertex as follows:
\begin{equation}
\label{eq:colimitgentle}
\begin{tikzpicture}[x=2.25em,y=2.25em,decoration={markings,mark=at position 0.99 with {\arrow[black]{Stealth[length=4.8pt]}}}, baseline=-.2em]
\begin{scope}
\draw[-stealth] (-2,.5) to (-1.25,.75);
\draw[-stealth] (-2,-.5) to (-1.25,-.75);
\draw[stealth-] (2,.5) to (1.25,.75);
\draw[stealth-] (2,-.5) to (1.25,-.75);
\end{scope}
\begin{scope}[xshift=-6.5em] 
\draw[line width=.5pt, color=magenta, fill=magenta] (0,0) circle(.1em);
\end{scope}
\begin{scope}[yshift=1em] 
\draw[line width=.5pt, color=magenta, fill=magenta] (0,0) circle(.1em);
\draw[line width=.5pt, fill=black] (-.7,.7) circle(.1em);
\draw[line width=.5pt, fill=black] (.7,.7) circle(.1em);
\draw[line width=.5pt, fill=black, overlay] (1.4,1.4) circle(.1em);
\node[circle, minimum size=.1em, outer sep=2pt, inner sep=0] (C) at (0,0) {};
\node[circle, outer sep=2pt, inner sep=0] (L) at (-.7,.7) {};
\node[circle, outer sep=2pt, inner sep=0] (R1) at (.7,.7) {};
\node[circle, outer sep=2pt, inner sep=0, overlay] (R2) at (1.4,1.4) {};
\draw[-stealth] (L) to (C);
\draw[-stealth] (C) to (R1);
\draw[-stealth, overlay] (R1) to (R2);
\draw[line cap=round, dash pattern=on 0pt off 1.2pt, line width=.6pt] (0,0) +(127:.8em) arc[start angle=127, end angle=53, radius=.8em] (.7,.7) +(214:.7em) to[bend left=60, looseness=1.22] +(51:.7em);
\end{scope}
\begin{scope}[xshift=-1.575em, yshift=-2.575em] 
\draw[line width=.5pt, fill=black] (0,0) circle(.1em);
\draw[line width=.5pt, color=magenta, fill=magenta] (.7,.7) circle(.1em);
\draw[line width=.5pt, fill=black] (.7,-.7) circle(.1em);
\draw[line width=.5pt, fill=black] (1.4,0) circle(.1em);
\node[circle, minimum size=.1em, outer sep=2pt, inner sep=0] (0) at (0,0) {};
\node[circle, minimum size=.1em, outer sep=2pt, inner sep=0] (1) at (.7,-.7) {};
\node[circle, minimum size=.1em, outer sep=2pt, inner sep=0] (2) at (1.4,0) {};
\node[circle, minimum size=.1em, outer sep=2pt, inner sep=0] (3) at (.7,.7) {};
\path[-stealth] (0) edge (1) (1) edge (2) (2) edge (3) (3) edge (0);
\draw[line cap=round, dash pattern=on 0pt off 1.2pt, line width=.6pt] (0,0) +(37:.8em) arc[start angle=37, end angle=-37, radius=.8em] (.7,-.7) +(127:.8em) arc[start angle=127, end angle=53, radius=.8em] (1.4,0) +(217:.8em) arc[start angle=217, end angle=143, radius=.8em] (.7,.7) +(-53:.8em) arc[start angle=-53, end angle=-127, radius=.8em];
\end{scope}
\begin{scope}[xshift=8em] 
\begin{scope}[xshift=-1.575em, yshift=-1.575em]
\draw[line width=.5pt, fill=black] (0,0) circle(.1em);
\draw[line width=.5pt, color=magenta, fill=magenta] (.7,.7) circle(.1em);
\draw[line width=.5pt, fill=black] (.7,-.7) circle(.1em);
\draw[line width=.5pt, fill=black] (1.4,0) circle(.1em);
\node[circle, minimum size=.1em, outer sep=2pt, inner sep=0] (0) at (0,0) {};
\node[circle, minimum size=.1em, outer sep=2pt, inner sep=0] (1) at (.7,-.7) {};
\node[circle, minimum size=.1em, outer sep=2pt, inner sep=0] (2) at (1.4,0) {};
\node[circle, minimum size=.1em, outer sep=2pt, inner sep=0] (3) at (.7,.7) {};
\path[-stealth] (0) edge (1) (1) edge (2) (2) edge (3) (3) edge (0);
\draw[line cap=round, dash pattern=on 0pt off 1.2pt, line width=.6pt] (0,0) +(37:.8em) arc[start angle=37, end angle=-37, radius=.8em] (.7,-.7) +(127:.8em) arc[start angle=127, end angle=53, radius=.8em] (1.4,0) +(217:.8em) arc[start angle=217, end angle=143, radius=.8em] (.7,.7) +(-53:.8em) arc[start angle=-53, end angle=-127, radius=.8em];
\end{scope}
\begin{scope}
\draw[line width=.5pt, fill=black] (-.7,.7) circle(.1em);
\draw[line width=.5pt, fill=black] (.7,.7) circle(.1em);
\draw[line width=.5pt, fill=black] (1.4,1.4) circle(.1em);
\node[circle, minimum size=.1em, outer sep=2pt, inner sep=0] (C) at (0,0) {};
\node[circle, outer sep=2pt, inner sep=0] (L) at (-.7,.7) {};
\node[circle, outer sep=2pt, inner sep=0] (R1) at (.7,.7) {};
\node[circle, outer sep=2pt, inner sep=0] (R2) at (1.4,1.4) {};
\draw[-stealth] (L) to (C);
\draw[-stealth] (C) to (R1);
\draw[-stealth] (R1) to (R2);
\draw[line cap=round, dash pattern=on 0pt off 1.2pt, line width=.6pt] (0,0) +(127:.8em) arc[start angle=127, end angle=53, radius=.8em] (.7,.7) +(214:.7em) to[bend left=60, looseness=1.22] +(51:.7em);
\end{scope}
\end{scope}
\end{tikzpicture}
\end{equation}
This diagram is a pushout diagram in the model category of $\Bbbk$-linear DG and A$_\infty$ categories localized at Morita equivalences. In other words, the gentle algebra $B$ associated to the quiver on the right represents the homotopy colimit of the diagram
\begin{equation*}
\begin{tikzpicture}[baseline=-2.6pt,description/.style={fill=white,inner sep=1.75pt}]
\matrix (m) [matrix of math nodes, row sep={.8em,between origins}, text height=1.5ex, column sep={3em,between origins}, text depth=0.25ex, ampersand replacement=\&, overlay]
{
\& A \\
\Bbbk \& \\
\& A\mathrlap{'} \\
};
\path[->,line width=.4pt]
(m-2-1.25) edge (m-1-2.180)
(m-2-1.-25) edge (m-3-2.180)
;
\end{tikzpicture}
\end{equation*}
which is the left part of \eqref{eq:colimitgentle}. This can be shown by cofibrantly replacing the algebras $A, A'$ by their Bardzell resolutions (see the proof of \cite[Theorem 6.10]{barmeierschrollwang}). This gluing can also be considered as a colimit in the $\infty$-category of $\Bbbk$-linear DG categories. 
\end{example}


\paragraph{Gluing of ribbon graphs}

The quiver with relations of a (graded) gentle algebra $A$ can be encoded by a (graded) ribbon graph \cite{schroll,haidenkatzarkovkontsevich,lekilipolishchuk2,opperplamondonschroll}. In terms of the presentation by building blocks \eqref{eq:buildingblocks}, these ribbon graphs can likewise be glued out of the following building blocks:
\begin{equation}
\label{eq:ribbonbuildingblocks}
\begin{tikzpicture}[x=2em,y=2em,decoration={markings,mark=at position 0.99 with {\arrow[black]{Stealth[length=4.8pt]}}}, baseline=-1.3em]
\begin{scope}[yshift=1.9em]
\node[font=\footnotesize] at (-1.75,0) {$n$\strut};
\end{scope}
\node[font=\footnotesize] at (-1.75,0) {$\mathrm A_{n-1}$};
\begin{scope} 
\begin{scope}[yshift=1.9em]
\node[font=\footnotesize] at (0,0) {$1$\strut};
\end{scope}
\draw[line width=.5pt, fill=ribboncolour, color=ribboncolour] (0,.05) circle(.16em);
\draw[line width=.5pt, fill=ribboncolour, color=ribboncolour] (0,.25) circle(.08em);
\draw[line cap=round, line width=.7pt, color=ribboncolour, overlay] (0,.05) -- +(-90:.5);
\end{scope}
\begin{scope}[xshift=3.5em] 
\begin{scope}[yshift=1.9em]
\node[font=\footnotesize] at (0,0) {$2$\strut};
\end{scope}
\draw[line width=.5pt, fill=ribboncolour, color=ribboncolour] (0,.05) circle(.16em);
\draw[line width=.5pt, fill=ribboncolour, color=ribboncolour] (0,.25) circle(.08em);
\draw[line cap=round, line width=.7pt, color=ribboncolour, overlay] (0,.05) -- +(0:.5);
\draw[line cap=round, line width=.7pt, color=ribboncolour, overlay] (0,.05) -- +(180:.5);
\end{scope}
\begin{scope}[xshift=7em] 
\begin{scope}[yshift=1.9em]
\node[font=\footnotesize] at (0,0) {$3$\strut};
\end{scope}
\draw[line width=.5pt, fill=ribboncolour, color=ribboncolour] (0,.05) circle(.16em);
\draw[line width=.5pt, fill=ribboncolour, color=ribboncolour] (0,.25) circle(.08em);
\draw[line cap=round, line width=.7pt, color=ribboncolour, overlay] (0,.05) -- +(150:.5);
\draw[line cap=round, line width=.7pt, color=ribboncolour, overlay] (0,.05) -- +(-90:.5);
\draw[line cap=round, line width=.7pt, color=ribboncolour, overlay] (0,.05) -- +(30:.5);
\end{scope}
\begin{scope}[xshift=10.5em] 
\begin{scope}[yshift=1.9em]
\node[font=\footnotesize] at (0,0) {$4$\strut};
\end{scope}
\draw[line width=.5pt, fill=ribboncolour, color=ribboncolour] (0,.05) circle(.16em);
\draw[line width=.5pt, fill=ribboncolour, color=ribboncolour] (0,.25) circle(.08em);
\draw[line cap=round, line width=.7pt, color=ribboncolour, overlay] (0,.05) -- +(135:.5);
\draw[line cap=round, line width=.7pt, color=ribboncolour, overlay] (0,.05) -- +(45:.5);
\draw[line cap=round, line width=.7pt, color=ribboncolour, overlay] (0,.05) -- +(-45:.5);
\draw[line cap=round, line width=.7pt, color=ribboncolour, overlay] (0,.05) -- +(-135:.5);
\end{scope}
\begin{scope}[xshift=14em] 
\begin{scope}[yshift=1.9em]
\node[font=\footnotesize] at (0,0) {$5$\strut};
\end{scope}
\draw[line width=.5pt, fill=ribboncolour, color=ribboncolour] (0,.05) circle(.16em);
\draw[line width=.5pt, fill=ribboncolour, color=ribboncolour] (0,.25) circle(.08em);
\draw[line cap=round, line width=.7pt, color=ribboncolour, overlay] (0,.05) -- +(72-18:.5);
\draw[line cap=round, line width=.7pt, color=ribboncolour, overlay] (0,.05) -- +(144-18:.5);
\draw[line cap=round, line width=.7pt, color=ribboncolour, overlay] (0,.05) -- +(216-18:.5);
\draw[line cap=round, line width=.7pt, color=ribboncolour, overlay] (0,.05) -- +(288-18:.5);
\draw[line cap=round, line width=.7pt, color=ribboncolour, overlay] (0,.05) -- +(-18:.5);
\end{scope}
\begin{scope}[xshift=17.5em] 
\begin{scope}[yshift=1.9em]
\node[font=\footnotesize] at (0,0) {$\dotsb$\strut};
\end{scope}
\end{scope}
\begin{scope}[yshift=-2.6em]
\node[font=\footnotesize] at (-1.75,0) {$\widetilde{\mathrm A}_{n-1}$};
\begin{scope}
\draw[line cap=round, line width=.7pt, color=ribboncolour] (0,0) -- +(90:.5);
\draw[line width=.7pt, color=ribboncolour, fill=white] (0,0) circle(.16em);
\end{scope}
\begin{scope}[xshift=3.5em] 
\draw[line cap=round, line width=.7pt, color=ribboncolour, overlay] (0,0) -- +(-90:.5);
\draw[line cap=round, line width=.7pt, color=ribboncolour, overlay] (0,0) -- +(90:.5);
\draw[line width=.7pt, color=ribboncolour, fill=white] (0,0) circle(.16em);
\end{scope}
\begin{scope}[xshift=7em] 
\draw[line cap=round, line width=.7pt, color=ribboncolour, overlay] (0,0) -- +(90:.5);
\draw[line cap=round, line width=.7pt, color=ribboncolour, overlay] (0,0) -- +(-30:.5);
\draw[line cap=round, line width=.7pt, color=ribboncolour, overlay] (0,0) -- +(210:.5);
\draw[line width=.7pt, color=ribboncolour, fill=white] (0,0) circle(.16em);
\end{scope}
\begin{scope}[xshift=10.5em] 
\draw[line cap=round, line width=.7pt, color=ribboncolour, overlay] (0,0) -- +(90:.5);
\draw[line cap=round, line width=.7pt, color=ribboncolour, overlay] (0,0) -- +(0:.5);
\draw[line cap=round, line width=.7pt, color=ribboncolour, overlay] (0,0) -- +(-90:.5);
\draw[line cap=round, line width=.7pt, color=ribboncolour, overlay] (0,0) -- +(180:.5);
\draw[line width=.7pt, color=ribboncolour, fill=white] (0,0) circle(.16em);
\end{scope}
\begin{scope}[xshift=14em] 
\draw[line cap=round, line width=.7pt, color=ribboncolour, overlay] (0,0) -- +(90:.5);
\draw[line cap=round, line width=.7pt, color=ribboncolour, overlay] (0,0) -- +(90-72:.5);
\draw[line cap=round, line width=.7pt, color=ribboncolour, overlay] (0,0) -- +(90+72:.5);
\draw[line cap=round, line width=.7pt, color=ribboncolour, overlay] (0,0) -- +(90-144:.5);
\draw[line cap=round, line width=.7pt, color=ribboncolour, overlay] (0,0) -- +(90+144:.5);
\draw[line width=.7pt, color=ribboncolour, fill=white] (0,0) circle(.16em);
\end{scope}
\end{scope}
\end{tikzpicture}
\end{equation}
In the upper row of ribbon graphs, the marking at one of the angles indicates where the natural cyclic ordering of the half-edges at the vertex is ``broken'' and restricted to a linear ordering. In the lower row we represent the vertex by an unfilled vertex, indicating that the corresponding polygon is ``punctured'', given by an extra boundary component with an ``entire boundary stop'' (cf.\ \eqref{eq:surfacebuildingblocks} and see Definition \ref{definition:orbifoldsurface}). Again, each building block may be assigned an arbitrary grading (see \cite[Section 1.3]{opperplamondonschroll} for more details).

The gluing of these building blocks is along edges\hspace{.2em}
\begin{tikzpicture}[x=1em, y=1em, baseline=-.25em]
\draw[line width=.7pt, line cap=round, color=ribboncolour] (0,0) -- (1,0);
\end{tikzpicture}
, a colimit construction in the category of topological spaces. Note that for any such gluing, the building blocks naturally form an open cover of the resulting ribbon graph.

\paragraph{Gluing properties of surfaces and their partially wrapped Fukaya categories}

The gluing properties of gentle algebras or ribbon graphs have a symplectic counterpart: the (graded) surface $\mathbf S$ with stops can be glued from building blocks
\begin{equation}
\label{eq:surfacebuildingblocks}
\begin{tikzpicture}[x=2em,y=2em,decoration={markings,mark=at position 0.99 with {\arrow[black]{Stealth[length=4.8pt]}}}, baseline=-1.3em]
\begin{scope}[yshift=2em]
\node[font=\footnotesize] at (-1.75,0) {$n$\strut};
\end{scope}
\node[font=\footnotesize] at (-1.75,0) {$\mathrm A_n$};
\begin{scope} 
\begin{scope}[yshift=2em]
\node[font=\footnotesize] at (0,0) {$1$\strut};
\end{scope}
\draw[line width=0pt, color=black!7, fill=black!7] (-.5,.5) to[out=-90, in=-90, looseness=1.95] (.5, .5) to cycle;
\draw[line width=.5pt, line cap=round] (-.5,.5) to (.5,.5);
\draw[color=stopcolour, fill=stopcolour] (0,.5) circle(.15em);
\draw[line cap=round, line width=.7pt, color=arccolour] (-.4,.5) to[out=-90, in=-90, looseness=2] (.4,.5);
\end{scope}
\begin{scope}[xshift=3.5em] 
\begin{scope}[yshift=2em]
\node[font=\footnotesize] at (0,0) {$2$\strut};
\end{scope}
\draw[line width=0pt, color=black!7, fill=black!7] (-.5,.5) to (-.4,-.5) to (.4,-.5) to (.5,.5) to cycle;
\draw[line width=.5pt, line cap=round] (-.5,.5) to (.5,.5);
\draw[line width=.5pt, line cap=round] (-.4,-.5) to (.4,-.5);
\draw[color=stopcolour, fill=stopcolour] (0,.5) circle(.15em);
\draw[line cap=round, line width=.7pt, color=arccolour] (-.4,.5) to (-.3,-.5) (.4,.5) to (.3,-.5);
\end{scope}
\begin{scope}[xshift=7em] 
\begin{scope}[yshift=2em]
\node[font=\footnotesize] at (0,0) {$3$\strut};
\end{scope}
\draw[line width=0pt, color=black!7, fill=black!7] ($(90:.5)+(90+90:.5)$) to[out=90-180, in=210-180] ($(210:.5)+(210-90:.4)$) to ($(210:.5)+(210+90:.4)$) to[out=210-180, in=330-180] ($(330:.5)+(330-90:.4)$) to ($(330:.5)+(330+90:.4)$) to[out=330-180, in=90-180] ($(90:.5)+(90-90:.5)$) to cycle;
\draw[line width=.5pt, line cap=round] (-.5,.5) to (.5,.5);
\draw[line width=.7pt, line cap=round, color=arccolour] ($(90:.5)+(90+90:.4)$) to[out=90-180, in=210-180] ($(210:.5)+(210-90:.3)$);
\draw[line width=.5pt, line cap=round] ($(210:.5)+(210-90:.4)$) to ($(210:.5)+(210+90:.4)$);
\draw[line width=.7pt, line cap=round, color=arccolour] ($(210:.5)+(210+90:.3)$) to[out=210-180, in=330-180] ($(330:.5)+(330-90:.3)$);
\draw[line width=.5pt, line cap=round] ($(330:.5)+(330-90:.4)$) to ($(330:.5)+(330+90:.4)$);
\draw[line width=.7pt, line cap=round, color=arccolour] ($(330:.5)+(330+90:.3)$) to[out=330-180, in=90-180] ($(90:.5)+(90-90:.4)$);
\draw[color=stopcolour, fill=stopcolour] (0,.5) circle(.15em);
\end{scope}
\begin{scope}[xshift=10.5em] 
\begin{scope}[yshift=2em]
\node[font=\footnotesize] at (0,0) {$4$\strut};
\end{scope}
\draw[line width=0pt, color=black!7, fill=black!7] ($(90:.5)+(90+90:.4)$) to[out=90-180, in=180-180] ($(180:.55)+(180-90:.3)$) to ($(180:.55)+(180+90:.3)$) to[out=180-180, in=270-180] ($(270:.55)+(270-90:.3)$) to ($(270:.55)+(270+90:.3)$) to[out=270-180, in=0-180] ($(0:.55)+(0-90:.3)$) to ($(0:.55)+(0+90:.3)$) to[out=0-180, in=90-180] ($(90:.5)+(90-90:.4)$) to cycle;
\draw[line width=.5pt, line cap=round] (-.4,.5) to (.4,.5);
\draw[line width=.7pt, line cap=round, color=arccolour] ($(90:.5)+(90+90:.3)$) to[out=90-180, in=180-180] ($(180:.55)+(180-90:.2)$);
\draw[line width=.5pt, line cap=round] ($(180:.55)+(180-90:.3)$) to ($(180:.55)+(180+90:.3)$);
\draw[line width=.7pt, line cap=round, color=arccolour] ($(180:.55)+(180+90:.2)$) to[out=180-180, in=270-180] ($(270:.55)+(270-90:.2)$);
\draw[line width=.5pt, line cap=round] ($(270:.55)+(270-90:.3)$) to ($(270:.55)+(270+90:.3)$);
\draw[line width=.7pt, line cap=round, color=arccolour] ($(270:.55)+(270+90:.2)$) to[out=270-180, in=0-180] ($(0:.55)+(0-90:.2)$);
\draw[line width=.5pt, line cap=round] ($(0:.55)+(0-90:.3)$) to ($(0:.55)+(0+90:.3)$);
\draw[line width=.7pt, line cap=round, color=arccolour] ($(0:.55)+(0+90:.2)$) to[out=0-180, in=90-180] ($(90:.5)+(90-90:.3)$);
\draw[fill=stopcolour, color=stopcolour] (0,.5) circle(.15em);
\end{scope}
\begin{scope}[xshift=14em] 
\begin{scope}[yshift=2em]
\node[font=\footnotesize] at (0,0) {$5$\strut};
\end{scope}
\draw[line width=0pt, color=black!7, fill=black!7] ($(90:.5)+(90+90:.35)$) to[out=90-180, in=162-180] ($(162:.55)+(162-90:.25)$) to ($(162:.55)+(162+90:.25)$) to[out=162-180, in=234-180] ($(234:.55)+(234-90:.25)$) to ($(234:.55)+(234+90:.25)$) to[out=234-180, in=306-180] ($(306:.55)+(306-90:.25)$) to ($(306:.55)+(306+90:.25)$) to[out=306-180, in=18-180] ($(18:.55)+(18-90:.25)$) to ($(18:.55)+(18+90:.25)$) to[out=18-180, in=90-180] ($(90:.5)+(90-90:.35)$) to cycle;
\draw[line width=.5pt, line cap=round] (-.35,.5) to (.35,.5);
\draw[line width=.5pt, line cap=round] ($(162:.55)+(162-90:.25)$) to ($(162:.55)+(162+90:.25)$);
\draw[line width=.5pt, line cap=round] ($(234:.55)+(234-90:.25)$) to ($(234:.55)+(234+90:.25)$);
\draw[line width=.5pt, line cap=round] ($(306:.55)+(306-90:.25)$) to ($(306:.55)+(306+90:.25)$);
\draw[line width=.5pt, line cap=round] ($(18:.55)+(18-90:.25)$) to ($(18:.55)+(18+90:.25)$);
\draw[line width=.7pt, line cap=round, color=arccolour] ($(90:.5)+(90+90:.25)$) to[out=90-180, in=162-180] ($(162:.55)+(162-90:.15)$);
\draw[line width=.7pt, line cap=round, color=arccolour] ($(162:.55)+(162+90:.15)$) to[out=162-180, in=234-180] ($(234:.55)+(234-90:.15)$);
\draw[line width=.7pt, line cap=round, color=arccolour] ($(234:.55)+(234+90:.15)$) to[out=234-180, in=306-180] ($(306:.55)+(306-90:.15)$);
\draw[line width=.7pt, line cap=round, color=arccolour] ($(306:.55)+(306+90:.15)$) to[out=306-180, in=18-180] ($(18:.55)+(18-90:.15)$);
\draw[line width=.7pt, line cap=round, color=arccolour] ($(18:.55)+(18+90:.15)$) to[out=18-180, in=90-180] ($(90:.5)+(90-90:.25)$);

\draw[fill=stopcolour, color=stopcolour] (0,.5) circle(.15em);
\end{scope}
\begin{scope}[xshift=17.5em] 
\begin{scope}[yshift=2em]
\node[font=\footnotesize] at (0,0) {$\dotsb$\strut};
\end{scope}
\end{scope}
\begin{scope}[yshift=-3em]
\node[font=\footnotesize] at (-1.75,0) {$\widetilde{\mathrm A}_{n-1}$};
\begin{scope} 
\draw[line width=0pt, color=black!7, fill=black!7] (-.5,-.5) to (-.5,-.1) to[out=90, in=90, looseness=1.95] (.5,-.1) to (.5,-.5) to cycle;
\draw[line width=.5pt, line cap=round] (-.5,-.5) to (.5,-.5);
\draw[line width=.7pt, line cap=round, color=arccolour] (-.4,-.5) to (-.4,-.1) to[out=90, in=90, looseness=2] (.4,-.1) to (.4,-.5);
\draw[line width=.15em, color=stopcolour, fill=white] (0,0) circle(.3em);
\end{scope}
\begin{scope}[xshift=3.5em] 
\draw[line width=0pt, color=black!7, fill=black!7] ($(180:.5)+(180-90:.4)$) to ($(180:.5)+(180+90:.4)$) to ($(0:.5)+(0-90:.4)$) to ($(0:.5)+(0+90:.4)$) to ($(180:.5)+(180-90:.4)$) to cycle;
\draw[line width=.7pt, line cap=round, color=arccolour] ($(180:.5)+(180+90:.3)$) to ($(0:.5)+(0-90:.3)$);
\draw[line width=.5pt, line cap=round] ($(180:.5)+(180-90:.4)$) to ($(180:.5)+(180+90:.4)$);
\draw[line width=.7pt, line cap=round, color=arccolour] ($(0:.5)+(0+90:.3)$) to ($(180:.5)+(180-90:.3)$);
\draw[line width=.5pt, line cap=round] ($(0:.5)+(0-90:.4)$) to ($(0:.5)+(0+90:.4)$);
\draw[line width=.15em, color=stopcolour, fill=white] (0,0) circle(.3em);
\end{scope}
\begin{scope}[xshift=7em] 
\draw[line width=0pt, color=black!7, fill=black!7] ($(150:.5)+(150+90:.4)$) to[out=150-180, in=270-180] ($(270:.5)+(270-90:.4)$) to ($(270:.5)+(270+90:.4)$) to[out=270-180, in=30-180] ($(30:.5)+(30-90:.4)$) to ($(30:.5)+(30+90:.4)$) to[out=30-180, in=150-180] ($(150:.5)+(150-90:.4)$) to cycle;
\draw[line width=.7pt, line cap=round, color=arccolour] ($(150:.5)+(150+90:.3)$) to[out=150-180, in=270-180] ($(270:.5)+(270-90:.3)$);
\draw[line width=.5pt, line cap=round] ($(150:.5)+(150-90:.4)$) to ($(150:.5)+(150+90:.4)$);
\draw[line width=.7pt, line cap=round, color=arccolour] ($(270:.5)+(270+90:.3)$) to[out=270-180, in=30-180] ($(30:.5)+(30-90:.3)$);
\draw[line width=.5pt, line cap=round] ($(270:.5)+(270-90:.4)$) to ($(270:.5)+(270+90:.4)$);
\draw[line width=.7pt, line cap=round, color=arccolour] ($(30:.5)+(30+90:.3)$) to[out=30-180, in=150-180] ($(150:.5)+(150-90:.3)$);
\draw[line width=.5pt, line cap=round] ($(30:.5)+(30-90:.4)$) to ($(30:.5)+(30+90:.4)$);
\draw[line width=.15em, color=stopcolour, fill=white] (0,0) circle(.3em);
\end{scope}
\begin{scope}[xshift=10.5em] 
\draw[line width=0pt, color=black!7, fill=black!7] ($(135:.55)+(135-90:.3)$) to ($(135:.55)+(135+90:.3)$) to[out=135-180, in=225-180] ($(225:.55)+(225-90:.3)$) to ($(225:.55)+(225+90:.3)$) to[out=225-180, in=315-180] ($(315:.55)+(315-90:.3)$) to ($(315:.55)+(315+90:.3)$) to[out=315-180, in=45-180] ($(45:.55)+(45-90:.3)$) to ($(45:.55)+(45+90:.3)$) to[out=45-180, in=135-180] cycle;
\draw[line width=.7pt, line cap=round, color=arccolour] ($(135:.55)+(135+90:.2)$) to[out=135-180, in=225-180] ($(225:.55)+(225-90:.2)$);
\draw[line width=.5pt, line cap=round] ($(135:.55)+(135-90:.3)$) to ($(135:.55)+(135+90:.3)$);
\draw[line width=.7pt, line cap=round, color=arccolour] ($(225:.55)+(225+90:.2)$) to[out=225-180, in=315-180] ($(315:.55)+(315-90:.2)$);
\draw[line width=.5pt, line cap=round] ($(225:.55)+(225-90:.3)$) to ($(225:.55)+(225+90:.3)$);
\draw[line width=.7pt, line cap=round, color=arccolour] ($(315:.55)+(315+90:.2)$) to[out=315-180, in=45-180] ($(45:.55)+(45-90:.2)$);
\draw[line width=.5pt, line cap=round] ($(315:.55)+(315-90:.3)$) to ($(315:.55)+(315+90:.3)$);
\draw[line width=.7pt, line cap=round, color=arccolour] ($(45:.55)+(45+90:.2)$) to[out=45-180, in=135-180] ($(135:.55)+(135-90:.2)$);
\draw[line width=.5pt, line cap=round] ($(45:.55)+(45-90:.3)$) to ($(45:.55)+(45+90:.3)$);
\draw[line width=.15em, color=stopcolour, fill=white] (0,0) circle(.3em);
\end{scope}
\begin{scope}[xshift=14em] 
\draw[line width=0pt, color=black!7, fill=black!7] ($(54:.55)+(54-90:.25)$) to ($(54:.55)+(54+90:.25)$) to[out=54-180, in=126-180] ($(126:.55)+(126-90:.25)$) to ($(126:.55)+(126+90:.25)$) to[out=126-180, in=198-180] ($(198:.55)+(198-90:.25)$) to ($(198:.55)+(198+90:.25)$) to[out=198-180, in=270-180] ($(270:.55)+(270-90:.25)$) to ($(270:.55)+(270+90:.25)$) to[out=270-180, in=342-180] ($(342:.55)+(342-90:.25)$) to ($(342:.55)+(342+90:.25)$) to[out=342-180, in=54-180] cycle;
\draw[line width=.5pt, line cap=round] ($(54:.55)+(54-90:.25)$) to ($(54:.55)+(54+90:.25)$);
\draw[line width=.7pt, line cap=round, color=arccolour] ($(54:.55)+(54+90:.15)$) to[out=54-180, in=126-180] ($(126:.55)+(126-90:.15)$);
\draw[line width=.5pt, line cap=round] ($(126:.55)+(126-90:.25)$) to ($(126:.55)+(126+90:.25)$);
\draw[line width=.7pt, line cap=round, color=arccolour] ($(126:.55)+(126+90:.15)$) to[out=126-180, in=198-180] ($(198:.55)+(198-90:.15)$);
\draw[line width=.5pt, line cap=round] ($(198:.55)+(198-90:.25)$) to ($(198:.55)+(198+90:.25)$);
\draw[line width=.7pt, line cap=round, color=arccolour] ($(198:.55)+(198+90:.15)$) to[out=198-180, in=270-180] ($(270:.55)+(270-90:.15)$);
\draw[line width=.5pt, line cap=round] ($(270:.55)+(270-90:.25)$) to ($(270:.55)+(270+90:.25)$);
\draw[line width=.7pt, line cap=round, color=arccolour] ($(270:.55)+(270+90:.15)$) to[out=270-180, in=342-180] ($(342:.55)+(342-90:.15)$);
\draw[line width=.5pt, line cap=round] ($(342:.55)+(342-90:.25)$) to ($(342:.55)+(342+90:.25)$);
\draw[line width=.7pt, line cap=round, color=arccolour] ($(342:.55)+(342+90:.15)$) to[out=342-180, in=54-180] ($(54:.55)+(54-90:.15)$);
\draw[line width=.15em, color=stopcolour, fill=white] (0,0) circle(.3em);
\end{scope}
\end{scope}
\end{tikzpicture}
\end{equation}
Here the black line represents a part of the boundary, the green line an {\it arc} inside the surface, and the blue markings the {\it stops}. Note that each building block contains exactly one stop (one connected closed subset of the boundary). Each building block may be graded by equipping it with a line field, which up to homotopy may be assumed to be parallel to the arcs. We also call the type $\mathrm A_n$ an {\it $n$-gon} and type $\widetilde{\mathrm A}_{n-1}$ a {\it punctured $n$-gon}. See Section \ref{section:orbifoldsurfaces} for more details.

\begin{remark}
The building blocks in the upper row of \eqref{eq:surfacebuildingblocks} are {\it type $\mathrm A_{n-1}$ Liouville/\hspace{0pt}Weinstein sectors} in the sense of \cite{ganatrapardonshende1,ganatrapardonshende2}. As the building blocks in the lower row play a similar role in the local-to-global properties of gentle algebras and finitely generated partially wrapped Fukaya categories, we sometimes refer to these as ``type $\widetilde{\mathrm A}_{n-1}$ sectors'', even if the underlying exact symplectic surface is not a Liouville sector.
\end{remark}

The (graded) building blocks \eqref{eq:surfacebuildingblocks} may be glued along the arcs, i.e.\ along pieces $\mathbf S_e = \hspace{.2em}
\begin{tikzpicture}[x=.5em, y=1em, baseline=-.25em]
\draw[line width=0, fill=black!7, color=black!7] (-1,.2) rectangle (1,-.2);
\draw[line width=.5pt, line cap=round] (-1,.2) -- (-1,-.2);
\draw[line width=.5pt, line cap=round] (1,.2) -- (1,-.2);
\draw[line width=.7pt, line cap=round, color=arccolour] (-1,0) -- (1,0);
\end{tikzpicture}
$
\hspace{.2em}
to obtain a (graded) surface. Indeed, any graded smooth surface $\mathbf S$ (cf.\ Definition \ref{definition:orbifoldsurface}) may be obtained as such a gluing, i.e.\ we may write $\mathbf S = \bigcup_{v \in \mathrm V (\mathrm G)} \mathbf S_v$, where each $\mathbf S_v$ is one of the building blocks \eqref{eq:surfacebuildingblocks} and the indexing set may be taken as the set $\mathrm V (\mathrm G)$ of vertices of the corresponding graded ribbon graph $\mathrm G$. Note that $\mathbf S_v \cap \mathbf S_w$ is either $\varnothing$, $\mathbf S_e$ or $\mathbf S_e \sqcup \mathbf S_{e'}$ depending on whether $v$ and $w$ share zero, one or two edges.

The green arcs in \eqref{eq:surfacebuildingblocks} represent objects in the partially wrapped Fukaya category and the flows along the boundary (avoiding the stop) span the graded vector space of morphisms, yielding precisely the algebraic building blocks \eqref{eq:buildingblocks}. The marvel of (partially) wrapped Fukaya categories is that the categories themselves can also be glued exactly in the same way, i.e.\ we have an equivalence
\[
\W (\mathbf S) \simeq \hocolim \bigl( \textstyle\prod_{e \in \mathrm E (\mathrm G)} \W (\mathbf S_e) \rightrightarrows \prod_{v \in \mathrm V (\mathrm G)} \W (\mathbf S_v) \bigr).
\]
This gluing property is called ``sectorial descent'' in \cite{ganatrapardonshende2} and is a cornerstone of the structure theory of partially wrapped Fukaya categories.

This local-to-global viewpoint can be formalized in the language of co\-sheaves of pretriangulated A$_\infty$ categories \cite{kontsevich2,dyckerhoffkapranov,haidenkatzarkovkontsevich}, where for Fukaya categories of surfaces one may take the ribbon graph as the underlying topological space on which this cosheaf of categories is defined. The local-to-global properties of $\W (\mathbf S)$ shown for smooth surfaces in \cite{dyckerhoffkapranov,haidenkatzarkovkontsevich} establish a partially wrapped version of a conjecture of Kontsevich on the local-to-global properties of wrapped Fukaya categories \cite{kontsevich2}. 
Algebraically, the passage from the surface to its partially wrapped Fukaya category can be realized by passing from the corresponding graded gentle algebra $A$ to its category of twisted complexes $\tw (A)^\natural$, giving the equivalence $\W (\mathbf S) \simeq \tw (A)^\natural$ \eqref{align:equivalencegentle}. 

\begin{remark}[Homological smoothness and higher structures]
\label{remark:homologicalsmoothness}
In \cite{haidenkatzarkovkontsevich}, the case of {\it homologically smooth} Fukaya categories is considered. In this case, the corresponding gentle algebras are homologically smooth and thus are built out of building blocks of type $\mathrm A_n$ only, as the building blocks of type $\widetilde{\mathrm A}_{n-1}$ are homologically non-smooth. In order to study ``flips'' of arcs it is useful to have extra flexibility by adding extra arcs and removing other arcs. In terms of building blocks, this may be achieved by admitting also building blocks without any stops:
\begin{equation}
\label{eq:surfacebuildingblocksnostop}
\begin{tikzpicture}[x=2em,y=2em,decoration={markings,mark=at position 0.99 with {\arrow[black]{Stealth[length=4.8pt]}}}, baseline=-.25em]
\begin{scope} 
\draw[line width=.5pt, fill=black] (90:.6) circle(.08em);
\draw[line width=.5pt, fill=black] (162:.6) circle(.08em);
\draw[line width=.5pt, fill=black] (234:.6) circle(.1em);
\draw[line width=.5pt, fill=black] (306:.6) circle(.1em);
\draw[line width=.5pt, fill=black] (18:.6) circle(.1em);
\node[circle, minimum size=.1em, outer sep=2pt, inner sep=0] (0) at (162:.6) {};
\node[circle, minimum size=.1em, outer sep=2pt, inner sep=0] (1) at (234:.6) {};
\node[circle, minimum size=.1em, outer sep=2pt, inner sep=0] (2) at (306:.6) {};
\node[circle, minimum size=.1em, outer sep=2pt, inner sep=0] (3) at (18:.6) {};
\node[circle, minimum size=.1em, outer sep=2pt, inner sep=0] (4) at (90:.6) {};
\path[-stealth] (0) edge (1) (1) edge (2) (2) edge (3) (3) edge (4) (4) edge (0);
\draw[line cap=round, dash pattern=on 0pt off 1.2pt, line width=.6pt] (90:.6) +(-135:.55em) to[bend right=32, looseness=1.2] +(-45:.65em) (90+72:.6) +(-135+72:.55em) to[bend right=32, looseness=1.2] +(-45+72:.65em) (90+144:.6) +(-135+144:.55em) to[bend right=32, looseness=1.2] +(-45+144:.65em) (90+216:.6) +(-135+216:.55em) to[bend right=32, looseness=1.2] +(-45+216:.65em) (90-72:.6) +(-135-72:.55em) to[bend right=32, looseness=1.2] +(-45-72:.65em);
\node[font=\footnotesize, left] at (180:.6) {$\Biggl($};
\node[font=\footnotesize, right] at (0:.6) {$, \mucirc_n \Biggr)$};
\node[font=\footnotesize] at (-3, 0) {$(\widetilde{\mathrm A}_{n-1}, \mucirc_n)$};
\end{scope}
\begin{scope}[xshift=7em] 
\draw[line cap=round, line width=.7pt, color=ribboncolour, overlay] (0,0) -- +(90:.5);
\draw[line cap=round, line width=.7pt, color=ribboncolour, overlay] (0,0) -- +(90-72:.5);
\draw[line cap=round, line width=.7pt, color=ribboncolour, overlay] (0,0) -- +(90+72:.5);
\draw[line cap=round, line width=.7pt, color=ribboncolour, overlay] (0,0) -- +(90-144:.5);
\draw[line cap=round, line width=.7pt, color=ribboncolour, overlay] (0,0) -- +(90+144:.5);
\draw[line width=.7pt, color=ribboncolour, fill=ribboncolour] (0,0) circle(.16em);
\end{scope}
\begin{scope}[xshift=12em] 
\draw[line width=0pt, color=black!7, fill=black!7] ($(54:.55)+(54-90:.25)$) to ($(54:.55)+(54+90:.25)$) to[out=54-180, in=126-180] ($(126:.55)+(126-90:.25)$) to ($(126:.55)+(126+90:.25)$) to[out=126-180, in=198-180] ($(198:.55)+(198-90:.25)$) to ($(198:.55)+(198+90:.25)$) to[out=198-180, in=270-180] ($(270:.55)+(270-90:.25)$) to ($(270:.55)+(270+90:.25)$) to[out=270-180, in=342-180] ($(342:.55)+(342-90:.25)$) to ($(342:.55)+(342+90:.25)$) to[out=342-180, in=54-180] cycle;
\draw[line width=.5pt, line cap=round] ($(54:.55)+(54-90:.25)$) to ($(54:.55)+(54+90:.25)$);
\draw[line width=.7pt, line cap=round, color=arccolour] ($(54:.55)+(54+90:.15)$) to[out=54-180, in=126-180] ($(126:.55)+(126-90:.15)$);
\draw[line width=.5pt, line cap=round] ($(126:.55)+(126-90:.25)$) to ($(126:.55)+(126+90:.25)$);
\draw[line width=.7pt, line cap=round, color=arccolour] ($(126:.55)+(126+90:.15)$) to[out=126-180, in=198-180] ($(198:.55)+(198-90:.15)$);
\draw[line width=.5pt, line cap=round] ($(198:.55)+(198-90:.25)$) to ($(198:.55)+(198+90:.25)$);
\draw[line width=.7pt, line cap=round, color=arccolour] ($(198:.55)+(198+90:.15)$) to[out=198-180, in=270-180] ($(270:.55)+(270-90:.15)$);
\draw[line width=.5pt, line cap=round] ($(270:.55)+(270-90:.25)$) to ($(270:.55)+(270+90:.25)$);
\draw[line width=.7pt, line cap=round, color=arccolour] ($(270:.55)+(270+90:.15)$) to[out=270-180, in=342-180] ($(342:.55)+(342-90:.15)$);
\draw[line width=.5pt, line cap=round] ($(342:.55)+(342-90:.25)$) to ($(342:.55)+(342+90:.25)$);
\draw[line width=.7pt, line cap=round, color=arccolour] ($(342:.55)+(342+90:.15)$) to[out=342-180, in=54-180] ($(54:.55)+(54-90:.15)$);
\node[font=\footnotesize] at (3,0) {$n \geq 2$};
\end{scope}
\end{tikzpicture}
\end{equation}
Although the underlying quiver is of type $\widetilde{\mathrm A}_{n-1}$ (with full quadratic monomial relations, as always), the grading is now required to satisfy $\sum_{p \in Q_1} \lvert p \rvert = n - 2$ and $\mucirc_n$ denotes a new nontrivial higher multiplication, equipping the path algebra with the structure of an A$_\infty$ algebra (see \eqref{align:smoothdiskproduct} for the formula). Geometrically, this higher structure can be seeing as counting a pseudo-holomorphic disk in the sense of partially wrapped Floer theory. Algebraically, it results in $(\widetilde{\mathrm A}_{n-1}, \mucirc_n)$ being homologically smooth and Morita equivalent to type $\mathrm A_{n-1}$. Moreover, $(\widetilde{\mathrm A}_{n-1}, \mucirc_n)$ can be viewed as an A$_\infty$ deformation of $\widetilde{\mathrm A}_{n-1}$ in the sense of Definition \ref{definition:ainfinitydeformation}.

This higher structure is also essential for fully wrapped Fukaya categories of surfaces with boundary (see e.g.\ \cite{bocklandt}), as such surfaces can only be glued out of \eqref{eq:surfacebuildingblocksnostop}.
\end{remark}

\subsection{From smooth surfaces to orbifold surfaces}

In \cite{barmeierschrollwang} Schroll and the authors of this article generalize the main constructions of partially wrapped Fukaya categories to graded surfaces with isolated orbifold singularities. We show the equivalence of different natural generalizations of the smooth case. One of the main results of \cite{barmeierschrollwang} can be summarized in the following equivalences
\begin{equation}
\label{eq:viewpoints}
\W (\mathbf S) := \mathbf \Gamma (\mathcal T_{\mathrm G (\Delta)}) \simeq \tw (\mathbf A_\Delta) \simeq (\W (\widetilde{\mathbf S}) / \mathbb Z_2)^\natural.
\end{equation}
Here $\W (\mathbf S)$ denotes the partially wrapped Fukaya category of a graded orbifold surface $\mathbf S$ (see Definition \ref{definition:orbifoldsurface} below) and
\begin{enumerate}
\item $\mathcal T_{\mathrm G (\Delta)}$ is a cosheaf of pretriangulated A$_\infty$ categories on a ribbon graph (or ribbon complex) dual to a dissection $\Delta$ of $\mathbf S$ and $\mathbf \Gamma (\mathcal T_{\mathrm G (\Delta)})$ denotes its category of global sections
\item $\mathbf A_\Delta$ is an explicit A$_\infty$ category associated to an admissible dissection $\Delta$ of $\mathbf S$, whose higher multiplications correspond to orbifold disk sequences
\item $\W (\widetilde{\mathbf S}) / \mathbb Z_2$ is the A$_\infty$ orbit category of $\W (\widetilde{\mathbf S})$ where $\widetilde{\mathbf S}$ is a smooth double cover of $\mathbf S$.
\end{enumerate}
We note that the second perspective via explicit A$_\infty$ structures is also studied in independent work by Cho and Kim \cite{chokim,kim} (from a different but ultimately equivalent perspective), and the third perspective on orbit categories is studied also independently by Cho and Kim \cite{chokim} (using the formalism of invariant categories) and by Amiot and Plamondon \cite{amiotplamondon2} (under the name of {\it skew-group categories}).

In \cite{barmeierschrollwang} we develop the flexible, but also more technical, formalism of (graded) ribbon {\it complexes} which are ribbon graphs to which certain orbifold $2$-cells are attached. In this survey, we focus on interpreting the results of \cite{barmeierschrollwang} purely in terms of ribbon graphs, analogous to the constructions outlined in Section \ref{subsection:gluingproperties}. 

Our main result of this survey (Theorem \ref{theorem:newdissection}) shows that the partially wrapped Fukaya category of an arbitary graded orbifold surface with stops can be glued out of Fukaya categories of the sectors of type $\mathrm A_n$ and $\widetilde{\mathrm A}_{n-1}$ \eqref{eq:surfacebuildingblocks}, together with a new family of ``sectors''
\begin{equation}
\label{eq:orbifoldbuildingblocks}
\begin{tikzpicture}[x=2em,y=2em,decoration={markings,mark=at position 0.99 with {\arrow[black]{Stealth[length=4.8pt]}}}, baseline=-.3em]
\begin{scope} 
\draw[line width=.5pt, fill=black] (90:.6) circle(.08em);
\draw[line width=.5pt, fill=black] (162:.6) circle(.08em);
\draw[line width=.5pt, fill=black] (234:.6) circle(.1em);
\draw[line width=.5pt, fill=black] (306:.6) circle(.1em);
\draw[line width=.5pt, fill=black] (18:.6) circle(.1em);
\node[circle, minimum size=.1em, outer sep=2pt, inner sep=0] (0) at (162:.6) {};
\node[circle, minimum size=.1em, outer sep=2pt, inner sep=0] (1) at (234:.6) {};
\node[circle, minimum size=.1em, outer sep=2pt, inner sep=0] (2) at (306:.6) {};
\node[circle, minimum size=.1em, outer sep=2pt, inner sep=0] (3) at (18:.6) {};
\node[circle, minimum size=.1em, outer sep=2pt, inner sep=0] (4) at (90:.6) {};
\path[-stealth] (0) edge (1) (1) edge (2) (2) edge (3) (3) edge (4) (4) edge (0);
\draw[line cap=round, dash pattern=on 0pt off 1.2pt, line width=.6pt] (90:.6) +(-135:.55em) to[bend right=32, looseness=1.2] +(-45:.65em) (90+72:.6) +(-135+72:.55em) to[bend right=32, looseness=1.2] +(-45+72:.65em) (90+144:.6) +(-135+144:.55em) to[bend right=32, looseness=1.2] +(-45+144:.65em) (90+216:.6) +(-135+216:.55em) to[bend right=32, looseness=1.2] +(-45+216:.65em) (90-72:.6) +(-135-72:.55em) to[bend right=32, looseness=1.2] +(-45-72:.65em);
\node[font=\footnotesize, left] at (180:.6) {$\Biggl($};
\node[font=\footnotesize, right] at (0:.6) {$, \muotimes_{2n} \Biggr)$};
\node[font=\footnotesize] at (-3, 0) {$(\widetilde{\mathrm A}_{n-1}, \muotimes_{2n})$};
\end{scope}
\begin{scope}[xshift=7em] 
\draw[line cap=round, line width=.7pt, color=ribboncolour, overlay] (0,0) -- +(90:.5);
\draw[line cap=round, line width=.7pt, color=ribboncolour, overlay] (0,0) -- +(90-72:.5);
\draw[line cap=round, line width=.7pt, color=ribboncolour, overlay] (0,0) -- +(90+72:.5);
\draw[line cap=round, line width=.7pt, color=ribboncolour, overlay] (0,0) -- +(90-144:.5);
\draw[line cap=round, line width=.7pt, color=ribboncolour, overlay] (0,0) -- +(90+144:.5);
\draw[line width=.3pt, color=ribboncolour, fill=white] (0,0) circle(.2em);
\draw[line width=.3pt, color=ribboncolour] (45:.2em) -- (-135:.2em) (-45:.2em) -- (135:.2em);
\end{scope}
\begin{scope}[xshift=12em] 
\draw[line width=0pt, color=black!7, fill=black!7] ($(54:.55)+(54-90:.25)$) to ($(54:.55)+(54+90:.25)$) to[out=54-180, in=126-180] ($(126:.55)+(126-90:.25)$) to ($(126:.55)+(126+90:.25)$) to[out=126-180, in=198-180] ($(198:.55)+(198-90:.25)$) to ($(198:.55)+(198+90:.25)$) to[out=198-180, in=270-180] ($(270:.55)+(270-90:.25)$) to ($(270:.55)+(270+90:.25)$) to[out=270-180, in=342-180] ($(342:.55)+(342-90:.25)$) to ($(342:.55)+(342+90:.25)$) to[out=342-180, in=54-180] cycle;
\draw[line width=.5pt, line cap=round] ($(54:.55)+(54-90:.25)$) to ($(54:.55)+(54+90:.25)$);
\draw[line width=.7pt, line cap=round, color=arccolour] ($(54:.55)+(54+90:.15)$) to[out=54-180, in=126-180] ($(126:.55)+(126-90:.15)$);
\draw[line width=.5pt, line cap=round] ($(126:.55)+(126-90:.25)$) to ($(126:.55)+(126+90:.25)$);
\draw[line width=.7pt, line cap=round, color=arccolour] ($(126:.55)+(126+90:.15)$) to[out=126-180, in=198-180] ($(198:.55)+(198-90:.15)$);
\draw[line width=.5pt, line cap=round] ($(198:.55)+(198-90:.25)$) to ($(198:.55)+(198+90:.25)$);
\draw[line width=.7pt, line cap=round, color=arccolour] ($(198:.55)+(198+90:.15)$) to[out=198-180, in=270-180] ($(270:.55)+(270-90:.15)$);
\draw[line width=.5pt, line cap=round] ($(270:.55)+(270-90:.25)$) to ($(270:.55)+(270+90:.25)$);
\draw[line width=.7pt, line cap=round, color=arccolour] ($(270:.55)+(270+90:.15)$) to[out=270-180, in=342-180] ($(342:.55)+(342-90:.15)$);
\draw[line width=.5pt, line cap=round] ($(342:.55)+(342-90:.25)$) to ($(342:.55)+(342+90:.25)$);
\draw[line width=.7pt, line cap=round, color=arccolour] ($(342:.55)+(342+90:.15)$) to[out=342-180, in=54-180] ($(54:.55)+(54-90:.15)$);
\node[font=\footnotesize] at (0,0) {$\times$};
\node[font=\footnotesize] at (3,0) {$n \geq 1$};
\end{scope}
\end{tikzpicture}
\end{equation}
where the grading is required to satisfy $\sum_{p \in Q_1} \lvert p \rvert = n - 1$ and $\muotimes_{2n}$ is a $2n$-ary operation defining an A$_\infty$ algebra structure on $\widetilde{\mathrm A}_{n-1}$ which for $n \geq 2$ is considered with full quadratic monomial relations. Note that the case $n = 1$ is special, as $\muotimes_2$ is a binary operation, thus changing the underlying composition of the loop $p$ at the vertex to $p^2 = 1$.

In the right picture, $\times$ marks an order $2$ orbifold on the surface. In fact, $\muotimes_{2n}$ is induced by $\mucirc_{2n}$ on $\widetilde{\mathrm A}_{2n-1}$ which can be viewed as a two-to-one cover of $\widetilde{\mathrm A}_{n-1}$. Since the A$_\infty$ algebra $(\widetilde{\mathrm A}_{n-1}, \muotimes_{2n})$ is Morita equivalent to the path algebra of a quiver of type $\mathrm D_{n+1}$, the orbifold disks in \eqref{eq:orbifoldbuildingblocks} can be viewed as ``type $\mathrm D_{n+1}$ sectors''. As in Remark \ref{remark:homologicalsmoothness}, $(\widetilde{\mathrm A}_{n-1}, \muotimes_{2n})$ can be viewed as an A$_\infty$ deformation of $\widetilde{\mathrm A}_{n-1}$. (However, these two types of A$_\infty$ deformations are qualitatively different!)

\paragraph{Gluing properties of skew-gentle algebras}

Graded skew-gentle algebras naturally arise a special case, namely by combining the building blocks for gentle algebras \eqref{eq:buildingblocks} with the simplest ($n = 1$) building block of \eqref{eq:orbifoldbuildingblocks}
\begin{equation}
\label{eq:skewgentlebuildingblock}
\begin{tikzpicture}[x=2em,y=2em,decoration={markings,mark=at position 0.99 with {\arrow[black]{Stealth[length=4.8pt]}}}, baseline=-.3em]
\begin{scope} 
\draw[line width=.5pt, fill=black] (0,.15) circle(.1em);
\node[circle, minimum size=.1em, outer sep=2pt, inner sep=0] (0) at (0,.15) {};
\path[-stealth] (0) edge[out=235, in=305, looseness=15] node[font=\scriptsize, pos=.6, overlay, right] {$\epsilon$} (0);
\node[font=\footnotesize, left] at (180:.15) {$\biggl($};
\node[font=\footnotesize, right] at (0:.4) {$, \muotimes_{2} \biggr)$};
\node[font=\footnotesize] at (-3, 0) {$(\widetilde{\mathrm A}_{0}, \muotimes_{2})$};
\end{scope}
\begin{scope}[xshift=7em] 
\draw[line cap=round, line width=.7pt, color=ribboncolour, overlay] (0,0) -- +(90:.5);
\draw[line width=.3pt, color=ribboncolour, fill=white] (0,0) circle(.2em);
\draw[line width=.3pt, color=ribboncolour] (45:.2em) -- (-135:.2em) (-45:.2em) -- (135:.2em);
\end{scope}
\begin{scope}[xshift=12em] 
\draw[line width=0pt, color=black!7, fill=black!7] (-.5,-.5) to (-.5,-.1) to[out=90, in=90, looseness=1.95] (.5,-.1) to (.5,-.5) to cycle;
\draw[line width=.5pt, line cap=round] (-.5,-.5) to (.5,-.5);
\draw[line width=.7pt, line cap=round, color=arccolour] (-.4,-.5) to (-.4,-.1) to[out=90, in=90, looseness=2] (.4,-.1) to (.4,-.5);
\node[font=\footnotesize] at (0,0) {$\times$};
\end{scope}
\end{tikzpicture}
\end{equation}
where $\muotimes_2$ is a binary product making $(\widetilde{\mathrm A}_0, \muotimes_2) \simeq \Bbbk [\epsilon] / (\epsilon^2 - 1) \simeq \Bbbk \times \Bbbk$ with $| \epsilon | = 0$.

In particular, we have that (graded) skew-gentle algebras arise as homotopy colimits of diagrams consisting of (graded) building blocks of types $\mathrm A_n$, $\widetilde{\mathrm A}_{n-1}$ and $(\widetilde{\mathrm A}_0, \muotimes_2)$.

\paragraph{From orbifold surfaces to derived skew-gentle algebras}

The equivalences \eqref{eq:viewpoints} give a symplectic interpretation of the orbifold surfaces that have been considered in the cluster theory of ``punctured surfaces'' \cite{schiffler,labardini,geisslabardinifragososchroeer,amiotplamondon,qiuzhou} and the representation theory of skew-gentle algebras \cite{qiuzhou,amiotbruestle,labardinifragososchrollvaldivieso,qiuzhangzhou}. (Note that the ``punctures'' that appear in the cluster-theoretic perspective can be viewed as order $2$ orbifold points. We use punctures with a different meaning, namely by removing an open disk from a surface, resulting in a new boundary component and in the absence of a higher multiplication.)

From the perspective of partially wrapped Fukaya categories and derived categories, a key difference to the smooth/gentle case is the observation that for an orbifold surface $\mathbf S$ there are generally many formal generators of $\W (\mathbf S)$ whose corresponding associative algebra $A$ is not skew-gentle. In other words, the class of skew-gentle algebras is not closed under derived equivalence. We conjecture \cite[Conjecture 8.11]{barmeierschrollwang} that every graded associative algebra which is derived equivalent to a graded skew-gentle algebra arises as the endomorphism algebra of a formal generator of $\W (\mathbf S)$ given by a {\it formal dissection} (see \cite[Definition 8.1]{barmeierschrollwang} and Section \ref{section:formal}) of the associated orbifold surface. This conjecture is known to hold for trivially graded gentle algebras by \cite{schroeerzimmermann} and is currently still open, even for the case of graded gentle algebras.

A classification of the formal generators of $\W (\mathbf S)$ arising from dissections of a graded orbifold surface $\mathbf S$ is given in \cite{barmeierschrollwang,kim}. The dissections giving rise to formal generators are called {\it formal dissections} in \cite{barmeierschrollwang} and they can be characterized geometrically (see Section \ref{section:formal}).

\subsection{Orbifold surfaces via deformations of Fukaya categories}
\label{subsection:seidel}

A main motivation for the study of Fukaya categories of orbifold surfaces stems from our work on deformations of path algebras \cite{barmeierwang}. We show in \cite{barmeierschrollwang2} that besides the three equivalent viewpoints in \eqref{eq:viewpoints}, which are intrinsic to the orbifold surface, the partially wrapped Fukaya category of an orbifold surface has yet another natural interpretation: it can be obtained as a {\it deformation} of the partially wrapped Fukaya category of a smooth surface. 

More precisely, it is shown in \cite{barmeierschrollwang2} that the partially wrapped Fukaya category of a smooth graded surface $\mathbf S_0$ admits a semi-universal family of A$_\infty$ deformations. This family may be viewed as an algebraization of the formal solution to the formal deformation problem. The family is defined over $\mathbb A^d$ where $d = \dim \HH^2 (\W (\mathbf S_0), \W (\mathbf S_0))$. Its fiber over $0 \in \mathbb A^d$ is $\W (\mathbf S_0)$, but being an algebraic family, it is possible to consider individual fibers of this family for all (closed) points of $\mathbb A^d$, corresponding to elements of $\Bbbk^d$. It turns out that every such fiber of the family is given by the partially wrapped Fukaya category of an orbifold surface $\mathbf S_\lambda$, where $\lambda \in \Bbbk^d$ and the orbifold singularities of $\mathbf S_\lambda$ are obtained by ``partially compactifying'' certain boundary components of $\mathbf S_0$. For the precise statement of this result we refer to \cite[Theorem 6.1]{barmeierschrollwang2}.

This deformation-theoretic viewpoint gives a geometric interpretation of the A$_\infty$ deformations of graded gentle algebras. Moreover, it can be understood as an instance of a general programme outlined in Seidel's ICM 2002 address \cite{seidel1} relating partial compactifications of symplectic manifolds and deformations of Fukaya categories. From this point of view, the results of \cite{barmeierschrollwang2} show several novel features:
\begin{enumerate}
\item In order to relate the full second Hochschild cohomology of partially wrapped Fukaya categories of surfaces to partial compactifications of the underlying surface, one is naturally led to orbifold singularities, as they arise through deformations of the Fukaya category.
\item In the case of partially wrapped Fukaya categories, partial compactifications may appear not only at ``fully wrapped'' boundary components (without stops) as considered in \cite{seidel1}, but also at partially wrapped boundary components or boundary components with entire boundary stops.
\item For a more complete view on the interplay between formal generators, deformations of Fukaya categories and partial compactifications, it is useful to consider also partially wrapped Fukaya categories that are not homologically smooth by allowing stops which are not Legendrian (cf.\ Remark \ref{remark:stops}).
\end{enumerate}

\begin{figure}
\[
\begin{tikzpicture}[x=1.3em,y=1.3em,decoration={markings,mark=at position 0.99 with {\arrow[black]{Stealth[length=4.8pt]}}}, scale=.6]
\draw[line width=.5pt] (0,0) ++ (180:4 and 1) arc[start angle=180,end angle=360,x radius=4,y radius=1, overlay] to[out=90,in=190,looseness=1] ++(2.5,4) arc[start angle=-90,end angle=90,x radius=.25,y radius=1] to[out=170,in=290,looseness=1, overlay] ++(-1,1) arc[start angle=20,end angle=135,radius=5, overlay] to[out=225,in=0,looseness=1] (-11,6) ++ (0,-2) to[out=0,in=90] (-9,0) arc[start angle=180,end angle=360,x radius=1,y radius=.25] arc[start angle=180,end angle=0,radius=1.5];
\begin{scope}
\node[font=\small, overlay, align=center] at (-5,8.5) {$\mathbf S_0$};
\draw[line width=.5pt,line cap=round] (5.5,7) ++(200:5) ++(0:2.5) arc[start angle=0,end angle=90,radius=2.5];
\draw[line width=.5pt,line cap=round] (5.5,7) ++(200:5) ++(0:2.5) ++(90:2.5) ++(280:2.5) arc[start angle=280,end angle=170,radius=2.5];
\end{scope}
\begin{scope}[shift={(-2.35,-2.35)}]
\draw[line width=.5pt,line cap=round] (5.5,7) ++(200:5) ++(0:2.5) arc[start angle=0,end angle=90,radius=2.5];
\draw[line width=.5pt,line cap=round] (5.5,7) ++(200:5) ++(0:2.5) ++(90:2.5) ++(280:2.5) arc[start angle=280,end angle=170,radius=2.5];
\end{scope}
\draw[line width=.6pt,dash pattern=on 0pt off 1.5pt,line cap=round, overlay] (0,0) ++ (180:4 and 1) arc[start angle=180,end angle=0,x radius=4,y radius=1, overlay] (6.5,4) arc[start angle=270,end angle=90,x radius=0.25,y radius=1, overlay] (-9,0) arc[start angle=180,end angle=0,x radius=1,y radius=.25, overlay];
\draw[line width=.2em,color=stopcolour] (-11,6) arc[start angle=90,end angle=450,x radius=.3, y radius=1];
\draw[fill=stopcolour, color=stopcolour, overlay] (85:4 and 1) circle(.25em);
\draw[fill=stopcolour, color=stopcolour, overlay] (235:4 and 1) circle(.25em);
\draw[fill=stopcolour, color=stopcolour, overlay] (325:4 and 1) circle(.25em);
\draw[fill=stopcolour, color=stopcolour] (-8,0) ++(260:1 and .25) circle(.25em);
\begin{scope}[xshift=32em]
\node[font=\small, overlay, align=center] at (-5.5,8.5) {$\mathbf S_\lambda$};
\draw[line width=.5pt, overlay] (0,0) ++ (180:4 and 1) arc[start angle=180,end angle=360,x radius=4,y radius=1] to[out=90,in=270,looseness=.9] ++(2,5) arc[start angle=0,end angle=135,radius=5.5] to[out=225,in=0,looseness=1] (-11,5) to[out=0,in=90] (-8,0) arc[start angle=180,end angle=0,radius=2];
\node[font=\tiny] at (-11,5) {$\times$};
\node[font=\tiny] at (-8,0) {$\times$};
\begin{scope}
\draw[line width=.5pt,line cap=round] (5.5,7) ++(200:5) ++(0:2.5) arc[start angle=0,end angle=90,radius=2.5];
\draw[line width=.5pt,line cap=round] (5.5,7) ++(200:5) ++(0:2.5) ++(90:2.5) ++(280:2.5) arc[start angle=280,end angle=170,radius=2.5];
\end{scope}
\begin{scope}[shift={(-2.35,-2.35)}]
\draw[line width=.5pt,line cap=round] (5.5,7) ++(200:5) ++(0:2.5) arc[start angle=0,end angle=90,radius=2.5];
\draw[line width=.5pt,line cap=round] (5.5,7) ++(200:5) ++(0:2.5) ++(90:2.5) ++(280:2.5) arc[start angle=280,end angle=170,radius=2.5];
\end{scope}
\draw[line width=.6pt,dash pattern=on 0pt off 1.5pt,line cap=round] (0,0) ++ (180:4 and 1) arc[start angle=180,end angle=0,x radius=4,y radius=1]; 
\draw[fill=stopcolour, color=stopcolour, overlay] (85:4 and 1) circle(.25em);
\draw[fill=stopcolour, color=stopcolour, overlay] (235:4 and 1) circle(.25em);
\draw[fill=stopcolour, color=stopcolour, overlay] (325:4 and 1) circle(.25em);
\end{scope}
\end{tikzpicture}
\]
\caption{An orbifold surface $\mathbf S_\lambda$ (right) arising through deformation of a partially wrapped Fukaya category of a smooth surface $\mathbf S_0$ (left).}
\label{fig:surface}
\end{figure}
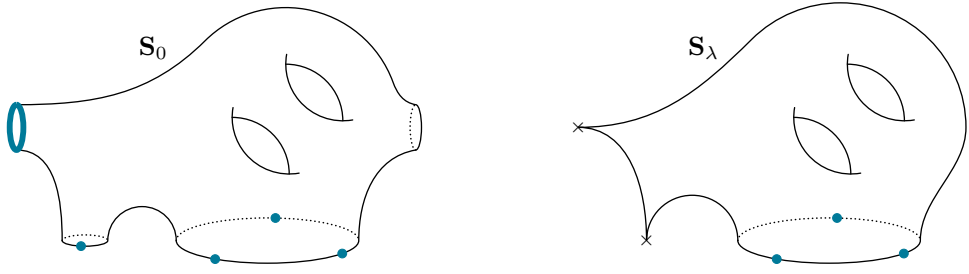

\section{Orbifold surfaces}
\label{section:orbifoldsurfaces}

In order to consider partially wrapped Fukaya categories of orbifold surfaces as $\mathbb Z$-graded A$_\infty$ categories, one should specify stop data and a grading structure, as in the following definition. 

\begin{definition}[{\cite[Definition 3.2]{barmeierschrollwang}}]
\label{definition:orbifoldsurface}
A {\it graded orbifold surface with stops} $\mathbf S = (S, \Sigma, \eta)$ consists of
\begin{itemize}
\item a compact connected oriented real orbifold surface $S$ (equipped with a smooth orbifold atlas) with nonempty smooth boundary $\partial S$ and isolated orbifold singularities
\item a closed nonempty subset $\Sigma \subset \partial S$ called the {\it stop}
\item a {\it grading structure} given by a smooth line field $\eta \in \Gamma (S, \mathbb P \mathrm T_S)$, i.e.\ a global section of the projectivized tangent (orbi)bundle of $S$.
\end{itemize}
The boundary $\partial S$ is homeomorphic to a disjoint union of circles. The connected components of $\Sigma \subset \partial S$ are thus either contractible or homeomorphic to a circle, and we call the former {\it boundary stops} and the latter {\it entire boundary stops}. We require that $\Sigma$ contain at least one boundary stop.
\end{definition}

We denote by $\Sing (S) \subset S \smallsetminus \partial S$ the set of orbifold points of the orbifold surface $S$, i.e.\ the set of points with a nontrivial stabilizer group. To reiterate this point, even though $S$ is smooth {\it as an orbifold}, we view the orbifold points as ``orbifold singularities'' to distinguish them from the points in a smooth manifold. In particular, by a {\it graded smooth surface} we mean a graded surface $\mathbf S = (S, \Sigma, \eta)$ such that $\Sing (S) = \varnothing$. Note that when $\Sing (S) = \varnothing$ and $\Sigma$ contains no entire boundary stops, we recover the graded smooth surfaces considered in \cite{haidenkatzarkovkontsevich,lekilipolishchuk2}.

We close the discussion of general orbifold surfaces with a few remarks providing further context for Definition \ref{definition:orbifoldsurface}.

\begin{remark}[Smooth structure on orbifold surfaces]
Any orbifold surface admits a unique smooth orbifold structure (up to orbifold diffeomorphism) \cite[Theorem 5.1.1]{choi} and this smooth orbifold structure is already determined by its topological data. As we only consider isolated orbifold singularities, the orbifold surface $S$ in Definition \ref{definition:orbifoldsurface} is completely determined by the genus and the number of boundary components of the underlying topological surface, and by the (necessarily finite) number of orbifold singularities.
\end{remark}


\begin{remark}[Symplectic structure and order of orbifold points]
\label{remark:orbifoldpoints}
Smooth real orbifold surfaces are topological spaces which are locally diffeomorphic to smooth disks $\mathbb D$ or to quotients of the form $\mathbb D / G$ for some finite group $G$. In the neighbourhood of an orbifold singularity with stabilizer group $G$, a section of $\mathbb P \mathrm T_S$ is given by a $G$-invariant section of the tangent bundle of $\mathbb D$ which can only exist for $G \simeq \mathbb Z_2$. In particular, it is an immediate consequence that all orbifold singularities of a {\it graded} orbifold surface are necessarily of order $2$ \cite[Lemma 3.1]{barmeierschrollwang}. Since the orbifold points are isolated, the orbifold charts are diffeomorphic to $\mathbb D / \mathbb Z_2$, where $\mathbb Z_2$ acts on the unit disk $\mathbb D \subset \mathbb R^2$ by rotation through an angle of $\pi$, i.e.\ $(x, y) \mapsto (-x, -y)$. This action has a unique orbifold point corresponding to the fixed point $0 \in \mathbb D \subset \mathbb R^2$ of the action. Note that this action is orientation-preserving and thus the symplectic (area) form $\mathrm d x \wedge \mathrm d y$ descends to a symplectic form on the orbifold.
\end{remark}

\begin{remark}[Stops]
\label{remark:stops}
The results of Ganatra, Pardon and Shende \cite{ganatrapardonshende1,ganatrapardonshende2} allow us to work in the generality of $\Sigma$ being any closed subset. Without loss of generality we may assume that the (contractible) boundary stops are singletons, rather than closed intervals. In particular, if $\Sigma$ contains no entire boundary stops, we may assume that $\Sigma$ is discrete in which case $\Sigma \subset \partial S$ is $0$-dimensional and hence a {\it Legendrian} subset of the $1$-dimensional boundary. The entire boundary stops are useful to give a geometric meaning to arbitrary (not necessarily homologically smooth) gentle algebras.
\end{remark}

\begin{remark}[Double cover]
Since the orbifold singularities are isolated and of order $2$, a graded orbifold surface $\mathbf S = (S, \Sigma, \eta)$ always admits a smooth double cover $\widetilde{\mathbf S} = (\widetilde S, \widetilde \Sigma, \widetilde \eta\,)$ (cf.\ \cite{amiotplamondon}), i.e.\ we can view $\mathbf S$ as a (global) $\mathbb Z_2$-quotient of a graded smooth surface. Note, however, that such a double cover need not be unique. For example, a cylinder with two orbifold points and $m$ resp.\ $n$ stops in its two boundary components admits the following two double covers: (1) a genus $0$ surface with two boundary components with $m$ stops and two with $n$ stops, (2) a genus $1$ surface with one boundary components with $2m$ stops and one with $2n$ stops.
\end{remark}

\subsection{Orbifold disks}
\label{subsection:orbifolddisks}

Orbifold disks are local models of general orbifold surfaces. In the remainder of this article our main focus lies on recalling the construction of the partially wrapped Fukaya category of an orbifold disk given in \cite{barmeierschrollwang} and then giving a new construction via a new type of dissection. Both constructions can therefore be used in the local-to-global construction of the partially wrapped Fukaya category of a general orbifold surface.

An orbifold disk with $n$ stops can be viewed as a $\mathbb Z_2$-quotient of a smooth disk with $2n$ stops as follows.

Let $\mathbf D_{2n} = (\mathbb D, \widetilde \Sigma, \widetilde \eta\,)$ be a graded smooth disk with $2n$ stops. Concretely, we may view $\mathbb D \subset \mathbb R^2$ as the (closed) unit disk and let \[\widetilde \Sigma = \{ (\cos (2 \pi k / 2n), \sin (2 \pi k / 2n) \}_{0 \leq k < 2n}\] be the set of $2n$ boundary stops. Up to homotopy, there is a unique line field on $\mathbb D$ and we let $\widetilde \eta$ be the horizontal line field. The $\mathbb Z_2$ action on $\mathbb D$, given by rotation around the origin through an angle $\pi$, leaves both $\widetilde \Sigma$ and $\widetilde \eta$ invariant, whence both descend to the quotient and we let $\Sigma = \widetilde \Sigma / \mathbb Z_2$ and let $\eta$ be the induced line field on $\mathbb D / \mathbb Z_2$. Then
\[
\mathbf D_{n}^\times := (\mathbb D / \mathbb Z_2, \Sigma, \eta)
\]
is a graded orbifold surface in the sense of Definition \ref{definition:orbifoldsurface} and we call $\mathbf D_n^\times$ a {\it graded orbifold disk with $n$ stops}. Since the origin is the unique fixpoint of the $\mathbb Z_2$ action, $\mathbf D_n^\times$ has exactly one orbifold point. See Fig.~\ref{fig:disk} for an illustration, where the orbifold point is marked by the symbol $\times$.

\begin{figure}
\centering
\begin{tikzpicture}[x=1em,y=1em]
\begin{scope}
\node[font=\scriptsize,left] at (-3.5,3.5) {$\mathbf D_{2n}$};
\foreach \k in {0,...,9} {%
\draw[fill=stopcolour, color=stopcolour] (36*\k:4em) circle(.16em);
}
\draw[clip] (0,0) circle(4em);
\foreach \r in {-4.25,-4,...,4.25} {%
\draw[line width=.2pt] (-4,\r) -- (4,\r);
}
\end{scope}
\begin{scope}
\draw[line width=.5pt] (0,0) circle(4em);
\foreach \k in {0,...,9} {%
\draw[fill=stopcolour, color=stopcolour] (36*\k:4em) circle(.16em);
}
\end{scope}
\begin{scope}[shift={(13em,0)}]
\draw[<->,line width=.5pt] (0.2,-2) arc[start angle=-90,end angle=90,radius=2em];
\draw[fill=black] circle(.07em);
\node[font=\scriptsize,right] at (2,0) {identify};
\draw[clip] (0,4) -- (0,-4) arc[start angle=270,end angle=90,radius=4em] -- cycle;
\draw[line width=.5pt] (0,4) -- (0,-4) arc[start angle=270,end angle=90,radius=4em] -- cycle;
\foreach \r in {-3.75,-3.5,...,3.75} {%
\draw[line width=.2pt] (-4,\r) -- (4,\r);
}
\end{scope}
\begin{scope}[shift={(13em,0)}]
\draw[line width=.5pt] (0,4) -- (0,-4) arc[start angle=270,end angle=90,radius=4em] -- cycle;
\foreach \k in {3,...,7} {%
\draw[fill=stopcolour, color=stopcolour] (36*\k:4em) circle(.16em);
}
\end{scope}
\begin{scope}[shift={(23em,0)}]
\node[font=\scriptsize,left] at (-2.8,2.8) {$\mathbf D^\times_n$};
\draw[line width=.5pt] (0,0) circle(3.186em);
\node[font=\scriptsize] at (0,0) {$\times$};
\draw[line width=.2pt] (180:3.18) to (0,0);
\draw[line width=.2pt] (2*19:3.18) to[bend left=26] (-2*19:3.18);
\draw[line width=.2pt] (2*27:3.18) to[bend left=40] (-2*27:3.18);
\draw[line width=.2pt] (2*33.3:3.18) to[bend left=50,looseness=1.07] (-2*33.3:3.18);
\draw[line width=.2pt] (2*38.9:3.18) to[bend left=57,looseness=1.19] (-2*38.9:3.18);
\draw[line width=.2pt] (2*43.6:3.18) to[bend left=60,looseness=1.34] (-2*43.6:3.18);
\draw[line width=.2pt] (2*48:3.18) to[bend left=62,looseness=1.51] (-2*48:3.18);
\draw[line width=.2pt] (2*52.2:3.18) to[bend left=64,looseness=1.69] (-2*52.2:3.18);
\draw[line width=.2pt] (2*56.2:3.18) to[bend left=66,looseness=1.9] (-2*56.2:3.18);
\draw[line width=.2pt] (2*60:3.18) to[bend left=68,looseness=2.13] (-2*60:3.18);
\draw[line width=.2pt] (2*63.5:3.18) to[bend left=70,looseness=2.39] (-2*63.5:3.18);
\draw[line width=.2pt] (2*67.1:3.18) to[bend left=72,looseness=2.73] (-2*67.1:3.18);
\draw[line width=.2pt] (2*70.5:3.18) to[bend left=74,looseness=3.14] (-2*70.5:3.18);
\draw[line width=.2pt] (2*73.9:3.18) to[bend left=76,looseness=3.72] (-2*73.9:3.18);
\draw[line width=.2pt] (2*77.1:3.18) to[bend left=78,looseness=4.52] (-2*77.1:3.18);
\draw[line width=.2pt] (2*80.3:3.18) to[bend left=82,looseness=5.78] (-2*80.3:3.18);
\draw[line width=.2pt] (2*83.5:3.18) to[bend left=86,looseness=8.3] (-2*83.5:3.18);
\draw[line width=.2pt] (2*86.8:3.18) to[bend left=89,looseness=16.15] (-2*86.8:3.18);
\end{scope}
\begin{scope}[shift={(23em,0)}]
\foreach \k in {0,...,4} {%
\draw[fill=stopcolour, color=stopcolour] (36+72*\k:3.186em) circle(.16em);
}
\end{scope}
\end{tikzpicture}
\caption{A smooth disk with $2n$ stops graded by the horizontal line field (left) and the graded orbifold disk with $n$ stops obtained as its $\mathbb Z_2$ quotient (right).}
\label{fig:disk}
\end{figure}
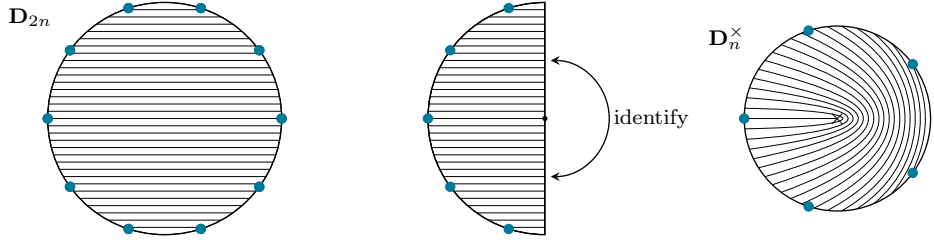

\subsection{Arcs on an orbifold disk}
\label{subsection:arcs}

We now introduce arcs on a graded orbifold surface which in later sections will serve as a set of ``generating Lagrangians'' for the partially wrapped Fukaya category.

\begin{definition}
An {\it arc} on a graded orbifold surface $\mathbf S = (S, \Sigma, \eta)$ is given by an embedding
\[
\gamma \colon [0, 1] \to S
\]
with $\gamma (0), \gamma (1) \in \Sing (S) \cup \partial S \smallsetminus \Sigma$ such that $\gamma ([0,1])$ is not nullhomotopic and $\gamma ((0,1)) \subset S \smallsetminus (\Sing (S) \cup \partial S)$. Here and unless stated otherwise, arcs are considered up to reparametrization and up to (orbifold) homotopy, where the homotopy is allowed to move the endpoints of $\gamma$ within $\partial S \smallsetminus \Sigma$. We also assume $\gamma$ to meet the boundary as well as other arcs transversally.
\end{definition}

Arcs on an orbifold disk $\mathbf D_n^\times = (\mathbb D / \mathbb Z_2, \Sigma, \eta)$ as in Section \ref{subsection:orbifolddisks} can be separated into the following three types:
\begin{enumerate}
\item arcs connecting two points in $\partial S \smallsetminus \Sigma$ whose images are continuously homotopic to the boundary
\item arcs connecting two points in $\partial S \smallsetminus \Sigma$ whose images are {\it not} continuously homotopic to the boundary
\item arcs connecting a point in a boundary segment in $\partial S \smallsetminus \Sigma$ and the orbifold point $0 \in \mathbb D / \mathbb Z_2$.
\end{enumerate}
Whereas we usually work with (smooth, orbifold) homotopies respecting the orbifold structure of $S$, by ``continuously homotopic'' we mean a homotopy in the underlying topological space, ignoring the orbifold structure.

All of these types of arcs lift to the double cover $\mathbf D_{2n}$ of $\mathbf D_n^\times$, but with different behaviour, illustrated in Fig.~\ref{fig:arcs}. An arc $\alpha$ of the first type lifts to a pair of arcs $\alpha^+, \alpha^-$ which are homotopic in the double cover. An arc $\beta$ of the second type lifts to a pair of arcs $\beta^+, \beta^-$ which are not homotopic in the double cover, but are interchanged by the $\mathbb Z_2$-action. An arc $\gamma$ of the third type lifts to a $\mathbb Z_2$-invariant arc $\gamma$ in $\mathbf D_{2n}$. (Note that in a general orbifold surface, there is a fourth type of arc, both of whose endpoints are orbifold points. This fourth type of arc lifts to a closed curve passing through the corresponding fixed points in the double cover.)

\begin{figure}
\centering
\begin{tikzpicture}[x=1em,y=1em]
\node[font=\scriptsize] at (159:2.2em) {$\alpha$};
\node[font=\scriptsize] at (275:2.5em) {$\beta$};
\node[font=\scriptsize] at (-12:2em) {$\gamma$};
\draw[line width=.5pt] (0,0) circle(3.2em);
\draw[fill=stopcolour, draw=stopcolour] (60:3.2em) circle(.16em);
\draw[fill=stopcolour, draw=stopcolour] (180:3.2em) circle(.16em);
\draw[fill=stopcolour, draw=stopcolour] (300:3.2em) circle(.16em);
\node[font=\scriptsize,shape=circle,scale=.6,fill=white] (X) at (0,0) {};
\node[font=\scriptsize] at (0,0) {$\times$};
\path[line width=.75pt,color=arccolour] (X) edge (0:3.2em);
\path[line width=.75pt,looseness=1.5, arccolour] (280:3.2em) edge[bend left=60] (320:3.2em);
\path[line width=.75pt,arccolour] (100:3.2em) edge[bend left=85,looseness=6] (140:3.2em); 
\begin{scope}[xshift=15em]
\draw[line width=.5pt, overlay] (0,0) circle(4em);
\foreach \a in {0,60,...,300} {%
\draw[fill=stopcolour, draw=stopcolour] (\a:4em) circle(.16em);
}
\path[line width=.75pt,looseness=1.5, arccolour] (284.5-60:4em) edge[bend left=60] (315.5-60:4em);
\path[line width=.75pt,looseness=1.5, arccolour, overlay] (284.5-240:4em) edge[bend left=60] (315.5-240:4em);
\path[line width=.75pt,arccolour, overlay] (90:4em) edge (270:4em);
\path[line width=.75pt,arccolour] (140:4em) edge (-20:4em);
\path[line width=.75pt,arccolour] (160:4em) edge (-40:4em);
\node[font=\scriptsize] at (120:2.6em) {$\alpha^-$};
\node[font=\scriptsize] at (180:2.6em) {$\alpha^+$};
\node[font=\scriptsize] at (221:3.3em) {$\beta^+$};
\node[font=\scriptsize] at (220-180:3.2em) {$\beta^-$};
\node[font=\scriptsize] at (261:2.6em) {$\gamma$};
\end{scope}
\end{tikzpicture}
\caption{Three types of arcs on an orbifold disk $\mathbf D_3^\times$ and their lifts to the double cover $\mathbf D_6$.}
\label{fig:arcs}
\end{figure}
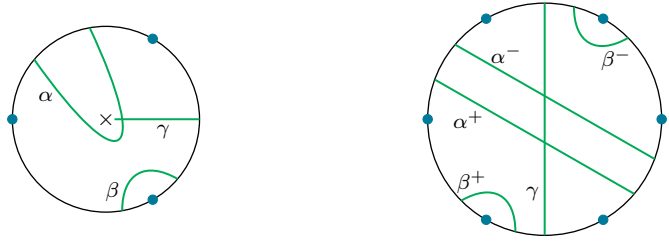

The Lagrangian intersection theory of curves in $\mathbf D_{2n}$, encoded by its partially wrapped Fukaya category $\W (\mathbf D_{2n})$, can now be restricted to the $\mathbb Z_2$-invariant structure which then descends to $\mathbf D^\times_n$. In general, there is a close relationship between the $\mathbb Z_2$-invariant Lagrangian intersection theory in $\widetilde{\mathbf S}$ and the Lagrangian intersection theory in $\mathbf S$ made precise by the equivalences \eqref{eq:viewpoints}. We now give an overview for the case of the orbifold disk.

\subsection{Admissible dissections}
\label{subsection:ribbongraph}

Similar to the smooth surfaces in \cite{haidenkatzarkovkontsevich}, it is useful to define the partially wrapped Fukaya category of an orbifold surface with stops via a notion of admissible dissections. Roughly speaking, such a dissection should contain enough arcs (Lagrangians) to generate all expected objects of the partially wrapped Fukaya category, but not too many arcs in order to avoid unnecessarily many higher operations. Such a dissection decomposes the surface into ``simple pieces''. Here we give the definition for orbifold disks only, as the general definition becomes more involved when the surface contains more than one orbifold. (See \cite[Sections 5 and 6]{barmeierschrollwang} for the general case.)

\begin{definition}
\label{definition:dissection}
Let $\Gamma$ be a finite collection of arcs on a graded orbifold disk $\mathbf D_n^\times = (\mathbb D / \mathbb Z_2, \Sigma, \eta)$ such that any two arcs intersect either at the orbifold point $0$ or not at all. The complement $(\mathbb D / \mathbb Z_2) \smallsetminus \Gamma$ is decomposed into its connected components as follows
\[
(\mathbb D / \mathbb Z_2) \smallsetminus \Gamma = \bigsqcup_{1 \leq i \leq k} P_{v_i}.
\]
Then $\Gamma$ is called a {\it dissection} of $\mathbf D_n^\times$ if the following hold:
\begin{enumerate}
\item Each $P_{v_i}$ is a smooth disk containing at most one boundary stop. In particular, $0 \notin  P_{v_i}$ for all $1 \leq i \leq k$.

\item If $P_{v_{i_1}}, \dotsc, P_{v_{i_m}}$ are the connected components around the orbifold point $0$, we have that $m \geq 2$ and at least one of the $P_{v_{i_l}}$'s does not contain any boundary stop. This condition allows us to introduce the orbifold stop at the orbifold point, see Definition \ref{definition:orbifoldstop}. 
\end{enumerate}
\end{definition}

We call the $P_{v_i}$'s {\it polygons}, an $m$-gon being a polygon whose closure contains $m$ curves belonging to $\Gamma$ (possibly counted twice). Since the $P_{v_i}$'s are connected components of the complement $(\mathbb D / \mathbb Z_2) \smallsetminus \Gamma$, they are open in the subspace topology of $\mathbb D / \mathbb Z_2$, but they will contain parts of the boundary of $\mathbb D / \mathbb Z_2$, thus we may ask whether or not some $P_{v_i}$ contains a boundary stop. Definition \ref{definition:dissection} requires there to be at least one arc connecting to the orbifold point $0$. In particular, $0$ does not belong to any $P_{v_i}$. 

We now introduce the notion of an {\it admissible dissection}, which allows us to give a concrete construction of the partially wrapped Fukaya category $\W (\mathbf S)$.

\begin{definition}\label{definition:orbifoldstop}
Let $\Gamma$ be a dissection. An {\it orbifold stop} at the orbifold point is an angle between two arcs in $\Gamma$ incident to the orbifold point such that no other stops lie in the same polygon.

An {\it admissible dissection} $\Delta$ consists of a dissection $\Gamma$ together with a choice of orbifold stop at the orbifold point.
\end{definition}

\begin{remark}[]
Arcs connecting to the orbifold point correspond to $\mathbb Z_2$-invariant arcs in the double cover. Such $\mathbb Z_2$-invariant arcs have the semi-simple ring $\Bbbk [\mathbb Z_2] \simeq \Bbbk \times \Bbbk$ as endomorphism ring and hence split into two direct summands in the idempotent completion of the orbit category $\W (\mathbf S) / \mathbb Z_2$. The choice of an orbifold stop is essentially equivalent to the choice of suitable direct summands in the orbit category of the $\mathbb Z_2$-invariant arcs in the double cover of $S$. This choice simplifies the morphism spaces between arcs connecting to orbifold points: it forces there to be exactly one maximal path around an orbifold point as in Section \ref{section:disk}. The orbifold stop indicates the missing morphism in one of the polygons around the orbifold point which in the figures we illustrate by {\color{stopcolour} $\bullet$} near the orbifold point $0$ (see e.g.\ Fig.~\ref{fig:orbifoldannulus}).
\end{remark}

\begin{figure}[t]
\centering
\begin{tikzpicture}[x=1em,y=1em,decoration={markings,mark=at position 0.55 with {\arrow[black]{Stealth[length=4.8pt]}}}]
\begin{scope}
\node[font=\small] at (-3.8em,2.5em) {$\mathbf D_3^\times$\strut};
\node[font=\scriptsize,shape=circle,scale=.6] (X) at (0,0) {};
\draw[line width=0,postaction={decorate}] (210:3.5em) arc[start angle=210, end angle=250, radius=3.5em];
\draw[line width=0,postaction={decorate}, overlay] (90:3.5em) arc[start angle=90, end angle=135, radius=3.5em];
\draw[line width=0,postaction={decorate}] (170:3.5em) arc[start angle=170, end angle=215, radius=3.5em];
\draw[line width=0,postaction={decorate}] (295:3.5em) arc[start angle=295, end angle=335, radius=3.5em];
\draw[line width=.5pt, overlay] (0,0) circle(3.5em);
\draw[fill=stopcolour,color=stopcolour] (30:3.5em) circle(.15em);
\draw[fill=stopcolour,color=stopcolour] (150:3.5em) circle(.15em);
\draw[fill=stopcolour,color=stopcolour, overlay] (270:3.5em) circle(.15em);
\path[line width=.75pt,color=arccolour] (X) edge (330:3.5em);
\path[line width=.75pt,color=arccolour] (X) edge (210:3.5em);
\path[line width=.75pt,color=arccolour, overlay] (X) edge (90:3.5em);
\path[line width=.75pt,color=arccolour, overlay] (245:3.5em) edge[bend left=55, looseness=1.1] (295:3.5em);
\path[line width=.75pt,color=arccolour, overlay] (130:3.5em) edge[bend left=55, looseness=1.1] (170:3.5em);
\draw[->, line width=.5pt] (207:.7em) arc[start angle=207, end angle=95, radius=.7em];
\draw[->, line width=.5pt] (87:.7em) arc[start angle=87, end angle=-27, radius=.7em];
\node[font=\scriptsize] at (150:1.15em) {$q_1$};
\node[font=\scriptsize] at (30:1.2em) {$q_2$};
\node[font=\scriptsize, overlay] at (110.5:4.1em) {$p_1$};
\node[font=\scriptsize, overlay] at (189.5:4.2em) {$p_2$};
\node[font=\scriptsize, overlay] at (227.5:4.2em) {$p_3$};
\node[font=\scriptsize, overlay] at (312.5:4.2em) {$p_4$};
\node[font=\scriptsize] at (0,0) {$\times$};
\draw[fill=stopcolour,color=stopcolour] (270:.5em) circle(.15em);
\end{scope}
\begin{scope}[xshift=12em]
\node[font=\scriptsize,shape=circle,scale=.6] (X) at (0,0) {};
\draw[line width=.5pt, overlay] (0,0) circle(3.5em);
\draw[fill=stopcolour,color=stopcolour] (30:3.5em) circle(.15em);
\draw[fill=stopcolour,color=stopcolour] (150:3.5em) circle(.15em);
\draw[fill=stopcolour,color=stopcolour, overlay] (270:3.5em) circle(.15em);
\path[line width=.75pt,color=arccolour] (X) edge (330:3.5em);
\path[line width=.75pt,color=arccolour] (X) edge (210:3.5em);
\path[line width=.75pt,color=arccolour, overlay] (X) edge (90:3.5em);
\path[line width=.75pt,color=arccolour, overlay] (245:3.5em) edge[bend left=55, looseness=1.1] (295:3.5em);
\path[line width=.75pt,color=arccolour, overlay] (130:3.5em) edge[bend left=55, looseness=1.1] (170:3.5em);
\node[font=\scriptsize] at (0,0) {$\times$};
\draw[fill=stopcolour,color=stopcolour] (270+120:.5em) circle(.15em);
\end{scope}
\end{tikzpicture}
\caption{An admissible dissection of $\mathbf D_3^\times$ (left), where $p_1, p_2, p_3, p_4$ are boundary paths and $q_1, q_2$ are orbifold paths. The right dissection is not admissible, since there are two stops in the upper right polygon, one boundary stop and one orbifold stop.}
\label{fig:orbifoldannulus}
\end{figure}
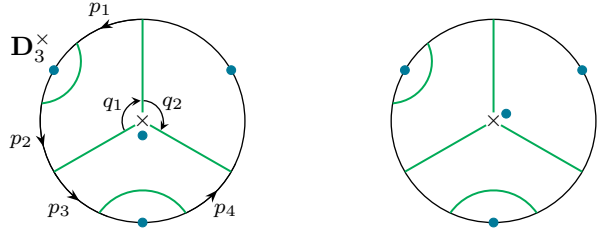

\section{A$_\infty$ categories and twisted complexes}
\label{section:ainfinitytwisted}

A fundamental tool for constructing Fukaya categories is the theory of twisted complexes \cite{bondalkapranov,kontsevich1,seidel2}. The basic idea is to consider a collection $\Delta$ of generating Lagrangians in a symplectic manifold. These Lagrangians define an A$_\infty$ category $\mathbf A_\Delta$, whose objects are given by the members in the collection $\Delta$ and the A$_\infty$ operations of $\mathbf A_\Delta$ account for their Floer theory. One then considers the category $\tw (\mathbf A_\Delta)^\natural$ of twisted complexes over $\mathbf A_\Delta$, which is the (idempotent completed) pretriangulated A$_\infty$ category generated by $\Delta$, namely the closure of $\mathbf A_\Delta$ under shifts, mapping cones and direct summands. Fukaya, Oh, Ohta and Ono proposed that the Lagrangian surgery (connected sum) operation corresponds to a mapping cone \cite[Chapter 10]{fukayaohohtaono}, see also \cite[Section 9.3]{palmerwoodward}. Twisted complexes can be viewed as iterated mapping cones, thus modelling ``new'' Lagrangians not belonging to $\Delta$.

Indeed, in the case of smooth graded surfaces, it is shown in \cite{haidenkatzarkovkontsevich} that, up to shifts, the indecomposable objects of $\W (\mathbf S) \simeq \tw (\mathbf A_\Delta)^\natural$ correspond precisely to homotopy classes of open and closed curves, possibly self-intersecting, where the closed curves have winding number $0$ with respect to the line field and furnished with an indecomposable local system of finite rank, thus modelling all (gradable) immersed Lagrangian submanifolds of $\mathbf S$. This classification result can be seen as extending the classification into string and band objects in the derived category of a zero-graded gentle algebra \cite{bekkertmerklen,burbandrozd} to the graded case. See \cite{opperplamondonschroll} for the classification of objects in the bounded derived category of a proper (i.e.\ finite-dimensional) graded gentle algebra and \cite{schrollicm} for a more detailed overview.

We give a short summary of the algebraic formalism of twisted complexes and refer to \cite{keller0,lefevre,seidel2} and to \cite[Section 2]{barmeierschrollwang} for more details.

\subsection{A$_\infty$ categories} 

Whereas DG categories only have two operations -- an associative composition $\mu_2$ and a differential $\mu_1$, together satisfying the Leibniz rule -- an A$_\infty$ category is equipped with infinitely many operations $\mu_n$ for $n \geq 1$ satisfying infinitely many equations. In the $\mathbb Z$-graded setting, there are many signs entering these equations. An immensely useful convention is to work with a shifted version of A$_\infty$ categories in which case all signs can be deduced from the Koszul sign rule.

Let $V$ be a $\mathbb Z$-graded vector space and let $V^i$ denote its degree $i$ component. We shall write $\s V$ for its shift, where $(\s V)^i = V^{i+1}$. Denoting by $\lvert v \rvert = i$ the degree of $v \in V^i$, we have that $\s v \in \s V$ has degree $\lvert \s v \rvert = \lvert v \rvert - 1 = i - 1$.

\begin{definition}
\label{definition:ainfinity}
An {\it A$_\infty$ category} $\mathbf A$ consists of a set $\mathrm{Ob} (\mathbf A)$ of objects, a graded $\Bbbk$-vector space $\mathbf A (X, Y)$ for each pair $X, Y \in \mathrm{Ob}(\mathbf A)$, and a collection of maps of degree~$1$
\[
\mu_n \colon \s \mathbf A (X_{n-1}, X_n) \otimes \s \mathbf A (X_{n-2}, X_{n-1}) \otimes \dotsb \otimes \s \mathbf A (X_0, X_1)
 \to \s \mathbf A (X_0, X_n)
\]
for each $n \geq 1$, satisfying the A$_\infty$ relations
\begin{equation}
\label{eq:ainfinityrelations}
\sum_{i=0}^{n-1}\sum_{j=1}^{n-i} (-1)^{\maltese_i} \mu_{n-j+1} (\s a_{n...i+j+1} \otimes \mu_j (\s a_{i+j...i+1}) \otimes \s a_{i...1}) = 0
\end{equation}
where here and elsewhere we set $\s a_{l \dotsc k} := \s a_l \otimes \dotsb \otimes \s a_{k+1} \otimes \s a_k$ for any indices $1 \leq k \leq l$ and we use $\maltese_i = \lvert \s a_{i \dotsc 1} \rvert = |a_i| + \dotsb + |a_1| - i$ as a shorthand for the (shifted) degree of consecutive morphisms.
\end{definition}

\begin{remark}
\label{remark:ainfinity}
We may set
\begin{equation*}
(\mu_{n-j+1} \bullet_i \mu_j) (\s a_{n \dotsc 1}) =
(-1)^{\maltese_i} \mu_{n-j+1} (\s a_{n ... i+j+1} \otimes \mu_j (\s a_{i+j...i+1}) \otimes \s a_{i ...1})
\end{equation*}
where $i$ is the number of tensor factors appearing before $\mu_j$, read from the right. If we now define
\[
\mu_{n-j+1} \bullet \mu_j = \sum_{i = 0}^{n-j} \mu_{n-j+1} \bullet_i \mu_j
\]
then \eqref{eq:ainfinityrelations} can be written simply as $\mu \bullet \mu = 0$ (see Section \ref{subsection:hhdefor}).
\end{remark}

\begin{example}
A DG category  $\mathbf A$ (in the usual unshifted sense) with differential $d$ and composition $\cdot$ is an A$_\infty$ category (in the sense of Definition \ref{definition:ainfinity}) with $\mu_n = 0$ for all $n \geq 3$ and 
\[
 \mu_1 (\s a_1) = (-1)^{|a_1|} \s  d a_1, \qquad  \mu_2 (\s a_2 \otimes \s a_1) = (-1)^{|a_1|} \s  (a_2 \cdot a_1)
\]
for any $a_1 \in \mathbf A (X_0, X_1)$ and $a_2 \in \mathbf A (X_1, X_2)$.
\end{example}

\begin{definition}
\label{definition:formal}
An A$_\infty$ category $\mathbf A$ is called {\it formal} if it is A$_\infty$ quasi-equivalent to the cohomology category $\H^\bullet (\mathbf A)$ where $\mathrm{Ob} (\H^\bullet (\mathbf A)) := \mathrm{Ob} (\mathbf A)$ and $\H^\bullet (\mathbf A) (X, Y) := \H^\bullet (\mathbf A (X, Y))$. 
\end{definition}

\begin{remark}
For any A$_\infty$ category $\mathbf A$, the homotopy transfer theorem (see for example \cite[Section 1.4]{lefevre}) implies that we may endow $\H^\bullet(\mathbf A)$ with an A$_\infty$ structure (called an {\it A$_\infty$ minimal model} of $\mathbf A$) such that the resulting A$_\infty$ category is A$_\infty$ quasi-equivalent to $\mathbf A$. Then an A$_\infty$ category $\mathbf A$ is formal if and only if its A$_\infty$ minimal model is A$_\infty$-isomorphic to the cohomology category $\H^\bullet(\mathbf A)$ with trivial higher products.
\end{remark}

The following result can be used to show the non-formality of an A$_\infty$ category $\mathbf A$. 

\begin{proposition}[{\cite[Proposition 2.8]{barmeierschrollwang}}] \label{proposition:formalcriterion}
Let $\mathbf A$ be a formal A$_\infty$ category. Then any full A$_\infty$ subcategory $\mathbf B \subset \mathbf A$ is formal.
\end{proposition}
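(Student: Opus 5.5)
We plan to restrict a formal A$_\infty$ quasi-equivalence to the objects of $\mathbf B$ and then use minimal models to compare the two structures. Since $\mathbf A$ is formal, there is a zigzag of A$_\infty$ quasi-equivalences between $\mathbf A$ and $\H^\bullet(\mathbf A)$. By the homotopy transfer theorem, we may replace $\mathbf A$ by a minimal model $\mathbf A^{\min}$ on $\H^\bullet(\mathbf A)$. The zigzag can then be straightened: A$_\infty$ quasi-equivalences between minimal A$_\infty$ categories admit quasi-inverses, and a quasi-equivalence between minimal categories that is bijective on objects is an A$_\infty$ isomorphism. This yields an A$_\infty$ functor $F \colon \mathbf A^{\min} \to \H^\bullet(\mathbf A)$, with trivial higher products on the target, whose first component $F_1$ is an isomorphism on every morphism space. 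We may further arrange $F_1 = \id$. Only the case where $F$ is bijective on objects is needed, since both categories have object set $\mathrm{Ob}(\mathbf A)$ and the transfer does not change objects.

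Next we restrict to $\mathbf B$. Homotopy transfer can be performed objectwise: choose, for each pair $X,Y$, a deformation retract of $\mathbf A(X,Y)$ onto its cohomology. The transferred operations $\mu_n$ on tuples of morphisms between objects of $\mathbf B$ are given by tree formulas that only involve morphism spaces between those objects. Hence the full subcategory $\mathbf B^{\min} \subset \mathbf A^{\min}$ on $\mathrm{Ob}(\mathbf B)$ is a minimal model of $\mathbf B$. The same locality holds for the quasi-isomorphism $\mathbf B^{\min} \to \mathbf B$, which is the restriction of $\mathbf A^{\min} \to \mathbf A$.

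Finally, we restrict $F$ to the objects of $\mathbf B$. Each component $F_n$ applied to composable morphisms among objects of $\mathbf B$ lands in a morphism space of $\mathbf B$. The A$_\infty$ functor equations for $F|_{\mathbf B}$ are exactly the subset of the equations for $F$ whose inputs lie in $\mathbf B$, so they still hold. The result is an A$_\infty$ isomorphism $\mathbf B^{\min} \to \H^\bullet(\mathbf B)$ with trivial higher products, since $\H^\bullet(\mathbf B)$ is the full subcategory of $\H^\bullet(\mathbf A)$ on these objects. Composing with $\mathbf B^{\min} \simeq \mathbf B$ shows that $\mathbf B$ is formal.

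The main obstacle is the first step: turning an abstract zigzag of quasi-equivalences into a single strict A$_\infty$ functor defined on all of $\mathbf A^{\min}$ that is the identity on objects. If a quasi-equivalence in the zigzag is not bijective on objects, we first replace the categories involved by quasi-equivalent ones on the same object set, again using homotopy transfer and invertibility of A$_\infty$ quasi-isomorphisms between minimal categories. Everything after that is the observation that A$_\infty$ functor equations and transfer formulas are local in the objects.
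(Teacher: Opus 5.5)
Your strategy is essentially the argument behind the cited proposition. You pass to a minimal model, where formality means the minimal model is A$_\infty$-isomorphic to $\H^\bullet(\mathbf A)$ with trivial higher products. You then observe that homotopy transfer is local in the objects, so the full subcategory $\mathbf B^{\min}$ is a minimal model of $\mathbf B$, and you restrict the isomorphism. Your second and third paragraphs are fine.

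The step that does not work as written is the reduction to an $F$ that is the identity on objects. The fact that $\mathbf A^{\min}$ and $\H^\bullet(\mathbf A)$ share the object set $\mathrm{Ob}(\mathbf A)$ does not make a quasi-equivalence $G \colon \mathbf A^{\min} \to \H^\bullet(\mathbf A)$ bijective on objects: it may permute objects, or send distinct isomorphic objects to the same one. Your proposed fix, passing to quasi-equivalent categories on the same object set, does not change this.

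The repair is short.
\begin{itemize}
\item Since both categories are minimal, $G_1$ is a graded autoequivalence of $\H^\bullet(\mathbf A)$.
\item Choose a graded quasi-inverse $K$ and a natural isomorphism $\phi \colon K G_1 \Rightarrow \id$ with degree-$0$ components.
\item Set $F(X) = X$ and $F_n = \phi_{X_n} \circ K_1 G_n \circ \phi_{X_0}^{-1}$ on inputs from $X_0$ to $X_n$.
\end{itemize}
The target has $\mu_1 = 0$, no higher products and strictly associative $\mu_2$. Hence the intermediate $\phi$'s cancel in the A$_\infty$ functor equations, and $F$ is an A$_\infty$ functor. Naturality of $\phi$ then gives $F_1 = \id$.

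Alternatively, you can avoid the issue, and minimal models, altogether. Take any quasi-equivalence $G \colon \mathbf A \to \H^\bullet(\mathbf A)$; one exists in this direction because quasi-equivalences are invertible up to homotopy. Its restriction to $\mathbf B$ is a quasi-equivalence onto the full subcategory of $\H^\bullet(\mathbf A)$ on $G(\mathrm{Ob}(\mathbf B))$. That subcategory has no higher products and is equivalent, as a graded category, to $\H^\bullet(\mathbf B)$ via the cohomology functor of $G$. Hence $\mathbf B$ is quasi-equivalent to $\H^\bullet(\mathbf B)$.
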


\subsection{Twisted complexes}

We now define the A$_\infty$ category $\tw(\mathbf A)$ of twisted complexes associated to any A$_\infty$ category $\mathbf A$.

The first step is to consider the closure of $\mathbf A$ under shifts, i.e.\ we first consider the A$_\infty$ category $\mathbb Z \mathbf A$ whose objects are the pairs $(X, m)$ of an object $X$ in $\mathbf A$ and $m \in \mathbb Z$, which we shall write as $\s^m X$ and is sometimes also denoted $X [m]$. Morphism spaces are defined by 
\[
\mathbb Z \mathbf A (\s^m X, \s^l Y) = \s^{l-m} \mathbf A (X, Y). 
\]
The A$_\infty$ product of $\mathbb Z \mathbf A$ is obtained by extending the A$_\infty$ product of $\mathbf A$. 

\begin{definition}
\label{definition:twisted}
A {\it twisted complex} $X^\bullet$ is given by a sequence
\[
(X^1, X^2, \dotsc, X^r) := (\s^{m_1} X_1, \s^{m_2} X_2, \dotsc, \s^{m_r} X_r)
\]
of objects in $\mathbb Z \mathbf A$, together with a matrix $\delta = (\delta^{ij})_{1 \leq i, j \leq r}$ of degree $1$ morphisms
\[
\delta^{ij} \in \mathbb Z \mathbf A (X^j, X^i) = \s^{m_i-m_j} \mathbf A (X_j, X_i)
\]
such that $\delta^{ij} = 0$ for all $i \geq j$ and 
\[
\sum_{n \geq 1} \mu^{\mathbb Z\mathbf A}_n (\s\delta \otimes \dotsb \otimes \s \delta) = 0.
\]
We may write $\delta^{ij} = \s^{m_i-m_j} \delta_{ij}$ for $\delta_{ij} \in \mathbf A (X_j, X_i)$.
\end{definition}

Twisted complexes form an A$_\infty$ category $\tw (\mathbf A)$ and the morphism space from $X^\bullet$ to $Y^\bullet$ in $\tw (\mathbf A)$ is given by $\bigoplus_{i,j} \mathbb Z\mathbf A (X^j, Y^i)$. The A$_\infty$ product of $\tw (\mathbf A)$ is given by the higher product twisted by $\delta$
\[
\mu_n^{\tw(\mathbf A)}(\s a_{n\dotsc1})  = \!\!\sum_{i_0,\dotsc,i_n\geq0} \mu_{n+i_0+\dotsb+i_n}^{\mathbb Z \mathbf A} \! ((\s\delta)^{\otimes i_n} \otimes \s a_n  \otimes \dotsb \otimes   (\s\delta)^{\otimes i_1}  \otimes \s a_1 \otimes (\s\delta)^{\otimes i_0}).
\]

\begin{remark}
The homotopy category $\H^0 \tw (\mathbf A)$ is triangulated. There is a natural embedding $\mathbf A \to \tw(\mathbf A)$ of A$_\infty$ categories sending an object $X$ in $\mathbf A$ to the one-term twisted complex $X^\bullet = (X)$ with $\delta = 0$.

The mapping cone of a morphism $f\colon X_0 \to X_1$ of degree $0$ in $\mathbf A$ is defined to be the two-term twisted complex $(\s X_0, X_1)$ with the differential $-f \colon \s X_0 \to  X_1$ \cite[Eq.~(3.28)]{seidel2}. 
\end{remark}

\begin{remark}\label{remark:perfectderived}
For any A$_\infty$ category $\mathbf A$, we have the notion of derived category $\mathcal D (\mathbf A)$ of A$_\infty$ modules over $\mathbf A$. Denote by $\per(\mathbf A)$ the full subcategory consisting of {\it compact} objects in $\mathcal D(\mathbf A)$. Then there is a natural triangle equivalence 
\[
\per(\mathbf A) \simeq \mathrm H^0 \tw(\mathbf A)^\natural
\]
where $^\natural$ denotes idempotent completion (see e.g.\ \cite[Chapitre 7]{lefevre}).
\end{remark}

Recall that any A$_\infty$ functor $F\colon \mathbf A \to \mathbf A'$ induces an A$_\infty$ functor \[\tw(F) \colon \tw(\mathbf A) \to \tw(\mathbf A')\] and if $F$ is fully faithful then so is $\tw(F)$ (see \cite[Section I.3]{seidel2}).

\begin{definition}\label{definition:moritaequivalence}
A strictly unital A$_\infty$ functor $F\colon \mathbf A \to \mathbf A'$ is a {\it Morita equivalence} if the induced functor $\tw(F)^\natural \colon \tw(\mathbf A)^\natural \to \tw(\mathbf A')^\natural$ is an A$_\infty$ quasi-equiv\-a\-lence. More generally, two A$_\infty$ categories $\mathbf A$ and $\mathbf A'$ are called {\it Morita equivalent} if there is a zigzag of Morita equivalences connecting $\mathbf A$ and $\mathbf A'$.
\end{definition}

\begin{remark}\label{remark:quasimoritaequivalence}
It follows from \cite[Lemma 3.25]{seidel2} that an A$_\infty$ quasi-equivalence $F\colon \mathbf A \to \mathbf A'$ induces a Morita equivalence between $\mathbf A$ and $\mathbf A'$. In particular, any formal A$_\infty$ category $\mathbf A$ is Morita equivalent to the graded category $\H^\bullet(\mathbf A)$. 
\end{remark}

\subsection{A$_\infty$ orbit categories and twisted complexes}
\label{subsection:orbitcategories}

The partially wrapped Fukaya category of an orbifold surface admits several equivalent descriptions \eqref{eq:viewpoints}, one of which is as {\it orbit category} of the partially wrapped Fukaya category of a smooth surface \cite{chokim, amiotplamondon2, barmeierschrollwang}. We briefly recall the notion of orbit categories for A$_\infty$ categories. 

Let $\mathbf A$ be an A$_\infty$ category and let $G$ be a finite group. An {\it action} of $G$ on $\mathbf A$ is a group homomorphism from $G$ to the group of strict A$_\infty$-automorphisms of $\mathbf A$. Each $g \in G$ thus gives a strict A$_\infty$-automorphism $F_g \colon \mathbf A \to \mathbf A$ satisfying $F_g \circ F_h = F_{gh}$ for all $g, h \in G$ as well as $F_{1_G} = \id_{\mathbf A}$. We simply write the action on objects and morphisms as $g X = F_g (X)$ and $g \phi = F_g (\phi)$, respectively.

\begin{definition}[{\cite[Definition 5.6]{opperzvonareva}}]
Let $\mathbf A$ be an A$_\infty$ category with a $G$-action. The {\it orbit category} $\mathbf A / G$ is the A$_\infty$ category with the same objects as $\mathbf A$ and morphisms between two objects $X$ and $Y$ given by
\[
(\mathbf A / G) (X, Y) = \bigoplus_{g \in G} \mathbf A (X, g Y).
\]
Compositions \[\mu_n^{\mathbf A / G} \colon (\mathbf A / G) (X_{n-1}, X_n) \otimes \dotsb \otimes (\mathbf A / G) (X_0, X_1) \to (\mathbf A / G) (X_0, X_n)\] are defined for all $n \geq 1$ and induced componentwise by the higher multiplications of $\mathbf A$ via the formula
{\small
\begin{align*}
\mathbf A (X_{n-1}, g_n X_n) \otimes \dotsb \otimes \mathbf A (X_0, g_1 X_1) &\to \mathbf A (X_0, g_1 \dotsb g_n X_n) \\
\phi_n \otimes \dotsb \otimes \phi_2 \otimes \phi_1 &\mapsto \mu_n^{\mathbf A} (F_{g_1 \dotsb g_{n-1}} (\phi_n) \otimes \dotsb \otimes F_{g_1} (\phi_2) \otimes \phi_1).
\end{align*}
}%
\end{definition}

An action of $G$ on an A$_\infty$ category $\mathbf A$ naturally induces an action of $G$ on the A$_\infty$ category $\tw (\mathbf A)$ of twisted complexes. Concretely, given a twisted complex $(\bigoplus_i \s^{m_i} X_i, \delta)$ in $\tw (\mathbf A)$, its translate under $g \in G$ is $g (\textstyle\bigoplus_i \s^{m_i} X_i, \delta) = (\bigoplus_i \s^{m_i} g X_i, g \delta)$, 
where $g \delta = (g \delta_{ij})_{i,j}$.

A$_\infty$ orbit categories and some of their basic properties were first considered in \cite{opperzvonareva}, with further properties being shown independently in \cite{amiotplamondon2} and in \cite[Section 2.6]{barmeierschrollwang}. Some of their properties are summarized in the following proposition.

\begin{proposition} Let $\mathbf A$ be an A$_\infty$ category with a $G$-action. Then the following hold.
\begin{enumerate}
\item The compositions $\mu_n^{\mathbf A / G}$ make $\mathbf A / G$ into an A$_\infty$ category.
\item If $\mathbf A$ is an A$_\infty$ category with a $G$-action, then the A$_\infty$ category $\tw (\mathbf A)$ of twisted complexes also inherits a natural $G$-action.
\item If $\mathbf A$ is a pretriangulated A$_\infty$ category with a $G$-action, then $\H^0 \mathbf A$ is a triangulated category with $G$-action and $\H^0 (\mathbf A / G) \simeq (\H^0 \mathbf A) / G$.
\item Assume that $\mathrm{char}(\Bbbk)$ does not divide $|G|$. Then the natural functor $$(\tw (\mathbf A) / G)^\natural \to \tw (\mathbf A / G)^\natural$$ is an A$_\infty$-quasi-equivalence.
\end{enumerate}
\end{proposition}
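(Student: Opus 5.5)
The four assertions are largely independent, so I would treat them one at a time. Throughout I would use the shifted conventions of Definition~\ref{definition:ainfinity}, in which all signs follow from the Koszul rule. The key point is that each $F_g$ is a strict A$_\infty$-automorphism: it has no higher components and commutes with every $\mu_n^{\mathbf A}$. It also has degree $0$, so it introduces no signs.

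For (1), fix $\phi_k \in \mathbf A(X_{k-1}, g_k X_k)$ and expand $\mu^{\mathbf A/G}\bullet\mu^{\mathbf A/G}$ on $\s\phi_{n\dotsc1}$. Consider a term in which the inner $\mu_j$ is applied to $\phi_{i+j},\dotsc,\phi_{i+1}$.
\begin{itemize}
\item The inner operation produces $\mu_j^{\mathbf A}(F_{g_{i+1}\dotsb g_{i+j-1}}\phi_{i+j}\otimes\dotsb\otimes\phi_{i+1})$, which lies in $\mathbf A(X_i, g_{i+1}\dotsb g_{i+j}X_{i+j})$.
\item The outer operation then applies $F_{g_1\dotsb g_i}$ to this output.
\item Strictness and $F_gF_h=F_{gh}$ give $F_{g_1\dotsb g_i}\mu_j^{\mathbf A}(\dotsb)=\mu_j^{\mathbf A}(F_{g_1\dotsb g_{i+j-1}}\phi_{i+j}\otimes\dotsb\otimes F_{g_1\dotsb g_i}\phi_{i+1})$.
\end{itemize}
Hence every term equals the corresponding term of $\mu^{\mathbf A}\bullet\mu^{\mathbf A}$ evaluated on the twisted inputs $F_{g_1\dotsb g_{k-1}}\phi_k$. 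That sum vanishes by the A$_\infty$ relations of $\mathbf A$.

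For (2), define $g$ on objects as in the text and on morphisms entrywise. The twisted complex equation $\sum_n\mu_n(\s\delta^{\otimes n})=0$ is preserved, because $F_g$ commutes with every $\mu_n$. The same computation shows that $F_g$ strictly commutes with the twisted operations $\mu^{\tw(\mathbf A)}_n$. The identities $F_gF_h=F_{gh}$ and $F_{1_G}=\id$ hold entrywise.

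For (3), first note that $\H^0(\mathbf A/G)(X,Y)=\bigoplus_g \H^0\mathbf A(X,gY)$, and that the composition in $\H^0$ is induced by $\mu_2$ with the twist by $F_g$. This is precisely the definition of the orbit category $(\H^0\mathbf A)/G$, so the two categories coincide. For the triangulated statement, the $G$-action on $\mathbf A$ is by A$_\infty$-automorphisms of a pretriangulated category. These preserve cones, so they induce exact autoequivalences of $\H^0\mathbf A$.

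For (4), I would argue in three steps.
\begin{itemize}
\item Construct the natural functor $\Phi\colon\tw(\mathbf A)/G\to\tw(\mathbf A/G)$. It sends $(\bigoplus \s^{m_i}X_i,\delta)$ to the same data, viewing $\delta$ in the identity component.
\item Show $\Phi$ is fully faithful. A morphism in $\tw(\mathbf A)/G$ is an element of $\bigoplus_g\bigoplus_{i,j}\mathbb Z\mathbf A(X^j,gY^i)$, which matches $\bigoplus_{i,j}\bigoplus_g\mathbb Z\mathbf A(X^j,gY^i)$ identically. The twisted operations agree because $\delta$ sits in degree $1_G$.
\item Show $\Phi$ is essentially surjective after idempotent completion. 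The image contains $\mathbf A/G$ and is closed under shifts and cones, since cones in $\tw(\mathbf A)$ map to cones. A twisted complex over $\mathbf A/G$, however, may have differential components $\delta_{ij}\in\mathbf A(X_j,gX_i)$ with $g\neq 1$, and these do not come from $\tw(\mathbf A)$ directly.
\end{itemize}
The third step is the main obstacle. I would first show that the image is closed under cones of arbitrary morphisms in $\tw(\mathbf A)/G$, not just identity-component ones. Given $f=\sum_g f_g$ with $f_g\colon X\to gY$, form the cone in $\tw(\mathbf A)$ of the morphism $X\to\bigoplus_{g\in G}gY$ with components $f_g$. The resulting object realises the orbit-category cone once the sum $\bigoplus_g gY$ is cut down to $Y$. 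This reduction uses the averaging idempotent $e=\frac1{|G|}\sum_g g$ on $\bigoplus_g gY$, which exists because $\operatorname{char}\Bbbk\nmid|G|$, together with the fact that $\bigoplus_g gY\cong Y^{\oplus|G|}$ in $\tw(\mathbf A)/G$. By induction on the length of twisted complexes, every object of $\tw(\mathbf A/G)$ is then a summand of an object in the image, and $\Phi^\natural$ is a quasi-equivalence.
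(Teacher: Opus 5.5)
The paper does not prove this proposition; it summarizes results of \cite{opperzvonareva}, \cite{amiotplamondon2} and \cite[Section 2.6]{barmeierschrollwang}. So I judge your argument on its own. Parts (1)--(3) are fine, and so is your full faithfulness of $\Phi$ in (4): the orbit-category formula twists each copy of $\delta$ by exactly the group element accumulated to its right, so $\Phi$ is an isomorphism on morphism complexes. The gap is in the key step of essential surjectivity. Cutting down only the target does not yield $\mathrm{Cone}(f)$ as a summand. Write $\bigoplus_g gY\simeq Y\oplus Y'$ using the averaging idempotent. Then $\tilde f$ becomes $\binom{a}{b}\colon X\to Y\oplus Y'$ with $a$ a multiple of $f$. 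The cone of such a map has no reason to contain $\mathrm{Cone}(a)$ as a summand unless the source splits compatibly.

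Here is a concrete failure. Let $\mathbb Z_2$ act trivially on $\mathbf A=\Bbbk$, with one object $X$. Then $(\mathbf A/\mathbb Z_2)(X,X)\simeq\Bbbk[\mathbb Z_2]$. Take $f=\tfrac12(1+\sigma)$, that is, $f_1=f_\sigma=\tfrac12\id_X$.
\begin{itemize}
\item Your map $\tilde f=\binom{1/2}{1/2}\colon X\to X\oplus X$ is split injective, so $\mathrm{Cone}(\tilde f)\simeq X=X^+\oplus X^-$.
\item But $f$ is the idempotent projecting onto $X^+$, so $\mathrm{Cone}(f)\simeq X^-\oplus\s X^-$.
\item Since everything is concentrated in degree $0$, $\s X^-$ is not a summand of $X^+\oplus X^-$.
\end{itemize}

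The fix is to induce on the source as well. Take the morphism $\bigoplus_h hX\to\bigoplus_g gY$ in $\tw(\mathbf A)$ whose $(g,h)$-component is $F_h(f_{h^{-1}g})\colon hX\to gY$. In the orbit category, $hX\simeq X$ and $gY\simeq Y$, via identity morphisms placed in the components $h^{\pm1}$ and $g^{\pm1}$. Under these isomorphisms the morphism becomes the matrix $M=(f_{h^{-1}g})_{g,h}\colon X^{\oplus|G|}\to Y^{\oplus|G|}$. Every row sum and every column sum of $M$ equals $f$. Hence $M$ commutes with the averaging idempotents $e=\frac1{|G|}(\text{all-ones matrix})$ on source and target; this is where $\mathrm{char}(\Bbbk)\nmid|G|$ enters. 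Moreover $M$ restricts to $f\colon\mathrm{im}(e_X)\simeq X\to\mathrm{im}(e_Y)\simeq Y$. Therefore $M\simeq f\oplus M'$, and $\mathrm{Cone}(f)$ is a direct summand of $\mathrm{Cone}(M)$ in the idempotent-complete triangulated category $\H^0\tw(\mathbf A/G)^\natural$. Here $\mathrm{Cone}(M)$ lies in the image of $\Phi$. In the toy example, $M=\tfrac12\bigl(\begin{smallmatrix}1&1\\1&1\end{smallmatrix}\bigr)$ and $\mathrm{Cone}(M)\simeq X\oplus\s X$, which does contain $X^-\oplus\s X^-$.

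With this corrected closure step, your induction on length goes through. A morphism out of a summand of an image object can be extended by zero to the ambient image object; this only adds a direct summand to the cone.
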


\section{Fukaya categories of orbifold disks via dissections}
\label{section:arcsystems}

The arcs in an admissible dissection $\Delta$ of an orbifold disk (see Section \ref{subsection:arcs}) will be the objects of an A$_\infty$ category $\mathbf A_\Delta$ of generating Lagrangians. We now describe the morphisms and higher structure of $\mathbf A_\Delta$.

\subsection{Boundary paths and orbifold paths}

The morphisms in the Fukaya category of orbifold surfaces consist of two different paths between arcs which we call {\it boundary paths} and {\it orbifold paths}. The former were already used in \cite{haidenkatzarkovkontsevich} and the latter are a natural generalization for orbifold surfaces where arcs may end in orbifold points, which correspond to morphisms of intersecting arcs in the double cover. 

\begin{definition}
By a {\it boundary path} from an arc $\gamma$ to $\gamma'$ in a graded orbifold surface $\mathbf S = (S, \Sigma, \eta)$ we mean a boundary interval with the induced orientation of $\partial S$, i.e.\ an embedding $p \colon [0, 1] \to \partial S \smallsetminus \Sigma$, considered up to reparametrization, such that $p (0) \in \gamma \cap \partial S$ and $p (1) \in \gamma' \cap \partial S$. Given two arcs $\gamma, \gamma' \in \Delta$, each boundary path from a point in $\gamma \cap \partial S$ to a point in $\gamma' \cap \partial S$ will be a morphism in the category $\mathbf A_\Delta$. Here, we assume that $\gamma$ and $\gamma'$ are in minimal position.

Let $\gamma, \gamma' \in \Gamma$ be two arcs intersecting at an orbifold point $x \in \Sing (S)$. By an {\it orbifold path} from $\gamma$ to $\gamma'$ at $x$, we mean a clockwise angle locally from $\gamma$ to $\gamma'$ based at $x$, which does not pass through the angle given by the orbifold stop. Note that the existence of an orbifold stop implies that there is a unique maximal orbifold path, i.e.\ any orbifold path at $x$ is its subpath.

We refer to boundary paths and orbifold paths collectively as {\it paths}. 
\end{definition}

\subsection{Grading via line fields}
\label{subsection:grading}

Let us give a quick recall on a grading structure on $(S, \Sigma)$ given by a line field $\eta \in \Gamma (S, \mathbb P \mathrm T_S)$, so that $\mathbf S = (S, \Sigma, \eta)$ is a {\it graded} orbifold surface with stops. There are two equivalent definitions of gradings. Here we only mention the one used in \cite{lekilipolishchuk2}, namely the degree can be given by the {\it winding numbers} of $\eta$, without considering gradings of arcs.

The line field $\eta$ restricts to a line field on each polygon $P_v$. Since different polygons are glued along the arcs in $\Delta$, we may change the line field up to homotopy so that $\eta$ is parallel to the arcs in $\Delta$. Up to homotopy, every line field can be obtained by gluing such line fields on the polygons. 

Let $\eta$ be a line field such that it is parallel to the arcs in $\Delta$. Then we may assign an integer $\theta_i$ to each boundary segment and each segment of a fixed small circle $\mathrm S^1_x$ around an orbifold point $x \in \Sing (S)$ lying in any polygon $P_v$, including the segments with (boundary or orbifold) stops. Namely $\theta_i$ is the {\it winding number} of the boundary segment of $\partial S$, or circle segment of $\mathrm S^1_x$, which is the signed count of how often the tangent line of the boundary or circle segment agrees with the line given by the restriction of $\eta$ to that segment, counterclockwise rotation contributing positively. For example, for a path
\[
\begin{tikzpicture}[x=.35em, y=.35em, decoration={markings,mark=at position 0.53 with {\arrow[black]{Stealth[length=4.8pt]}}}]
\begin{scope}
\draw[line width=.5pt, line cap=round, postaction={decorate}] (-1.5,0) -- (13.5,0);
\draw[line width=.7pt, line cap=round, color=arccolour] (0,0) -- (0,5);
\draw[line width=.7pt, line cap=round, color=arccolour] (12,0) -- (12,5);
\node[font=\scriptsize, overlay] at (6,-1.95) {$p$};
\node[right] at (5.5em, 2.5) {we have};
\end{scope}
\begin{scope}[xshift=11em]
\draw[line width=.5pt, line cap=round] (-1.5,0) -- (13.5,0);
\draw[line width=.2pt, line cap=round] (-1,0) -- (-1,5);
\draw[line width=.2pt, line cap=round] (13,0) -- (13,5);
\draw[line width=.7pt, line cap=round, color=arccolour] (0,0) -- (0,5);
\draw[line width=.7pt, line cap=round, color=arccolour] (12,0) -- (12,5);
\draw[line width=.2pt, line cap=round] (1,5) to[in=100, out=271] (1.3,0);
\draw[line width=.2pt, line cap=round] (2,5) to[in=120, out=273] (2.95,0);
\draw[line width=.2pt, line cap=round] (3,5) to[in=180, out=275] (6,0);
\draw[line width=.2pt, line cap=round] (9,5) to[in=0, out=265] (6,0);
\draw[line width=.2pt, line cap=round] (4,5) to[bend right=82, looseness=2.3] (8,5);
\draw[line width=.2pt, line cap=round] (5,5) to[bend right=68, looseness=1.4] (7,5);
\draw[line width=.2pt, line cap=round] (10,5) to[in=60, out=267] (9.05,0);
\draw[line width=.2pt, line cap=round] (11,5) to[in=80, out=269] (10.7,0);
\node[font=\scriptsize, overlay] at (6,-1.95) {$\mathrm w_\eta (p) = 1$};
\end{scope}
\begin{scope}[xshift=18em]
\draw[line width=.5pt, line cap=round] (-1.5,0) -- (13.5,0);
\draw[line width=.2pt, line cap=round] (-1,0) -- (-1,5);
\draw[line width=.2pt, line cap=round] (13,0) -- (13,5);
\foreach \c in {1,...,11} {
\draw[line width=.2pt, line cap=round] (\c,0) -- (\c,5);
}
\draw[line width=.7pt, line cap=round, color=arccolour] (0,0) -- (0,5);
\draw[line width=.7pt, line cap=round, color=arccolour] (12,0) -- (12,5);

\node[font=\scriptsize, overlay] at (6,-1.95) {$\mathrm w_\eta (p) = 0$};
\end{scope}
\begin{scope}[xshift=25em]
\begin{scope}[rotate=180, yshift=-1.75em, xshift=-4.2em]
\draw[line width=.2pt, line cap=round] (-1,0) -- (-1,5);
\draw[line width=.2pt, line cap=round] (13,0) -- (13,5);
\draw[line width=.7pt, line cap=round, color=arccolour] (0,0) -- (0,5);
\draw[line width=.7pt, line cap=round, color=arccolour] (12,0) -- (12,5);
\draw[line width=.2pt, line cap=round] (1,5) to[in=100, out=271] (1.3,0);
\draw[line width=.2pt, line cap=round] (2,5) to[in=120, out=273] (2.95,0);
\draw[line width=.2pt, line cap=round] (3,5) to[in=180, out=275] (6,0);
\draw[line width=.2pt, line cap=round] (9,5) to[in=0, out=265] (6,0);
\draw[line width=.2pt, line cap=round] (4,5) to[bend right=82, looseness=2.3] (8,5);
\draw[line width=.2pt, line cap=round] (5,5) to[bend right=68, looseness=1.4] (7,5);
\draw[line width=.2pt, line cap=round] (10,5) to[in=60, out=267] (9.05,0);
\draw[line width=.2pt, line cap=round] (11,5) to[in=80, out=269] (10.7,0);
\end{scope}
\draw[line width=.5pt, line cap=round] (-1.5,0) -- (13.5,0);
\node[font=\scriptsize, overlay] at (6,-1.95) {$\mathrm w_\eta (p) = -1$};
\end{scope}
\end{tikzpicture}
\]
for the illustrated three line fields. (Note that with this convention, the boundary of a closed disk has winding number $-2$.)

The winding numbers $\theta_1, \dotsc, \theta_n$ along the boundary or circle segments in an $n$-gon $P_v$ satisfy the topological constraint
\[
\theta_1 + \dotsb + \theta_n = n - 2
\]
deriving from the Poincaré--Hopf index formula. 
The degrees of the (boundary or orbifold) paths are then given by the winding numbers of the paths along $\eta$, i.e.\
$|p| = \mathrm w_\eta (p)$.

\subsection{Explicit A$_\infty$ categories from admissible dissections}
\label{section:ainfinity}

In this section we define the partially wrapped Fukaya category of a graded orbifold disk with stops in terms of any admissible dissection $\Delta$. That is, given  an admissible dissection $\Delta$ (see Definition \ref{definition:orbifoldstop}) we now construct an explicit $\Bbbk$-linear A$_\infty$ category $\mathbf A_\Delta$ whose associated category $\tw (\mathbf A_\Delta)^\natural$ is equivalent to $\W (\mathbf S)$. Up to Morita equivalence, the resulting category $\mathbf A_\Delta$ is independent of $\Delta$ \cite[Theorem 6.10]{barmeierschrollwang}.

The explicit nature of the higher products on $\mathbf A_\Delta$ allows us in Section \ref{section:formal} to give combinatorial conditions on $\Delta$ such that $\mathbf A_\Delta$ is a {\it formal} A$_\infty$ category, i.e.\ A$_\infty$-quasi-equivalent to its cohomology (Definition \ref{definition:formal}). This gives rise to explicit derived equivalences between associative algebras arising from what we call {\it formal dissections}.

\subsubsection{Objects and morphisms}
\label{subsection:objectsandmorphisms}

Let $\mathbf S = (S, \Sigma, \eta)$ be a graded orbifold surface with stops and let $\Delta$ be an admissible dissection. The A$_\infty$ category $\mathbf A_\Delta$ has objects given by the arcs in $\Delta$.

Given two arcs $\gamma$ and $\gamma'$ in $\Delta$, a $\Bbbk$-linear basis of morphisms from $\gamma$ to $\gamma'$ is given by (boundary and orbifold) paths from $\gamma$ to $\gamma'$ together with the identity morphism (denoted by $\id_{\gamma}$) in case $\gamma = \gamma'$. 

\subsubsection{Composition and higher products}
\label{subsection:higher}

We define three types of A$_\infty$ products on $\mathbf A_\Delta$ denoted by
\begin{itemize}
\item $\mubar_2$ for the (associative) concatenation of paths
\item $\mucirc_n$ for $n \geq 2$, where $\circ$ stands for a smooth disk sequence
\item $\mutimes_{n-1}$ for $n \geq 2$, where $\times$ stands for an orbifold disk sequence.
\end{itemize}


Let $p, q$ be two paths defining morphisms from $\gamma$ to $\gamma'$ and from $\gamma'$ to $\gamma''$, respectively. If $p$ and $q$ can be concatenated nontrivially, because they are both consecutive boundary paths or consecutive angles around an orbifold point, we define $$\mubar_2 (\s q \otimes \s p) = (-1)^{|p|} \s q p$$ where $q p$ denotes the concatenation of $p$ and $q$. In particular, we have
\[
(-1)^{|p|} \mubar_2 (\s \, \id_{\gamma'} \otimes \s p) = \s p = \mubar_2 (\s p \otimes \s \, \id_{\gamma}).
\]


In addition to the above composition, the remaining A$_\infty$ products are obtained from {\it smooth disk sequences} and {\it orbifold disk sequences}. (This is ensured by the precise condition on admissibility. For orbifold surfaces, there are further possibilities of non-admissible dissections which give rise to extra higher products.) The former were already introduced in \cite{haidenkatzarkovkontsevich} (under the name {\it disk sequence}) to define $n$-ary higher operations for $n \geq 2$, which we denote here by $\mucirc_{n}$. The latter higher products, denoted $\mutimes_{n}$ have arity $\geq 1$.

\begin{definition}
\label{definition:disksequence}
Let $\Delta$ be  an admissible dissection of $\mathbf S = (S, \Sigma, \eta)$. A {\it smooth disk sequence} of length $n$ is an $n$-gon in $S$ cut out by a subset of the arcs in $\Delta$ containing neither orbifold points or orbifold stops in its interior nor any boundary stops in its boundary. We may denote a smooth disk sequence of length $n$ as 
\[
\gamma_0 \overset{p_1}{\frown} \gamma_1 \overset{p_2}{\frown} \dotsb\overset{p_{n-1}}{\frown} \gamma_{n-1} \overset{p_{n}}{\frown} \gamma_0
\]
where $\gamma_0, p_1, \gamma_1, p_2, \dotsc, \gamma_{n-1}, p_{n}$ are the consecutive arcs and paths in the boundary of the $n$-gon. In particular, they are cyclic and the boundary paths compose to zero and thus define a quiver of type $\widetilde{\mathrm A}_{n-1}$. 

An {\it orbifold disk sequence} of length $n$ is an $n$-gon in $S$  cut out by a subset of the arcs in $\Delta$ containing exactly one orbifold stop and containing neither boundary stops nor orbifold points. We may denote an orbifold disk sequence of length $n$ as 
\[
\gamma_0 \overset{p_1}{\smile} \gamma_1 \overset{p_2}{\smile} \dotsb\overset{p_{n-1}}{\smile} \gamma_{n-1} 
\]
where $\gamma_0, p_1, \gamma_1, p_2, \dotsc, \gamma_{n-2}, p_{n-1}, \gamma_{n-1}$ are the consecutive arcs and paths in the boundary of the $n$-gon such that the orbifold stop lies between $\gamma_0$ and $\gamma_{n-1}$.
\end{definition}

Let $\gamma_0 \overset{p_1}{\frown} \gamma_1 \overset{p_2}{\frown} \dotsb\overset{p_{n-1}}{\frown} \gamma_{n-1} \overset{p_{n}}{\frown} \gamma_0$ be a smooth disk sequence of length $n$. Then we define
\begin{align}
\label{align:smoothdiskproduct}
\mucirc_n (\s p_i \otimes \dotsb \otimes \s p_1 \otimes \s p_n \otimes \dotsb \otimes \s p_{i+1}) = \s \, \id_{\gamma_i}
\end{align}
for all $0 \leq i < n$. Note that $\mucirc_n$ is precisely the higher product in the building block \eqref{eq:surfacebuildingblocksnostop} in Remark \ref{remark:homologicalsmoothness}. (Note that although \eqref{align:smoothdiskproduct} is the only higher structure on the building block \eqref{eq:surfacebuildingblocksnostop}, in a general surface, the gluing produces further higher structures, namely forcing linearity with respect to the composition $\mubar_2$ in the first and last entries of $\mucirc_n$, see \cite[Remark 7.4]{barmeierschrollwang}.)

Let $\gamma_0 \overset{p_1}{\smile} \gamma_1 \overset{p_2}{\smile} \dotsb\overset{p_{n-1}}{\smile} \gamma_{n-1} $ be an orbifold disk sequence of length $n$. Let $q$ be the orbifold path from $\gamma_0$ to $\gamma_{n-1}$ at the orbifold point $0$. Denote by $r$ the unique maximal orbifold path at $0$ so that $r = q'' q q'$, where $q', q''$ are orbifold subpaths (possibly trivial) of $p$. Then 
\begin{align}\label{align:orbifolddiskproduct}
\mutimes_{n-1} ( \s p_{n-1} \otimes \dotsb\otimes \s p_1) = (-1)^{|q''|} \s q.
\end{align}

\begin{figure}
\centering
\begin{tikzpicture}[x=1em,y=1em]
\begin{scope}[decoration={markings,mark=at position 0.63 with {\arrow[black]{Stealth[length=4.8pt]}}}]
\draw[line width=.5pt,postaction={decorate}] (1,0) -- (-1,0);
\draw[line width=.5pt,postaction={decorate}] (-1,-4) -- (1,-4);
\draw[line width=.75pt, color=arccolour, line cap=round] (-1,0) -- (-1,-4);
\draw[line width=.75pt, color=arccolour, line cap=round] (1,0) -- (1,-4);
\node[font=\scriptsize,left=-.3ex] at (-1.2,-2) {$\gamma_0$};
\node[font=\scriptsize,right=-.3ex] at (1.2,-2) {$\gamma_1$};
\node[font=\scriptsize, overlay] at (0,-4.6) {$p_1$};
\node[font=\scriptsize] at (0,.6) {$p_2$};
\node[font=\small] at (3.5,-2) {$\leftrightarrow $};
\node[font=\small] at (9.5,-1){$\mucirc_2(\s p_2 \otimes \s p_1) = \s \, \id_{\gamma_0}$};
\node[font=\small] at (9.5,-3){$\mucirc_2(\s p_1 \otimes \s p_2) = \s \, \id_{\gamma_1}$};
\end{scope}
\begin{scope}[xshift=-8em,decoration={markings,mark=at position 0.55 with {\arrow[black]{Stealth[length=4.8pt]}}}]
\draw[line width=.5pt,postaction={decorate}] ($(252:4.5)+(-1,0)$) -- ($(288:4.5)+(1,0)$);
\node[font=\scriptsize] at (0,0) {$\times$};
\draw[->, line width=.5pt] (249:1.2em) arc[start angle=249, end angle=-69, radius=1.2em];
\node[font=\scriptsize,shape=circle,scale=.6] (X) at (0,0) {};
\draw[line width=.75pt, color=arccolour, line cap=round] (X) -- (252:4.5);
\draw[line width=.75pt, color=arccolour, line cap=round] (X) -- (288:4.5);
\draw[line width=.75pt, color=arccolour, line cap=round] (X) -- (190:2em);
\draw[dash pattern=on 0pt off 1.3pt, line width=.8pt, line cap=round, color=arccolour] (190:2.1em) -- (190:2.5em);
\draw[line width=.75pt, color=arccolour, line cap=round] (X) -- (128:2em);
\draw[dash pattern=on 0pt off 1.3pt, line width=.8pt, line cap=round, color=arccolour, overlay] (128:2.1em) -- (128:2.5em);
\draw[line width=.75pt, color=arccolour, line cap=round] (X) -- (350:2em);
\draw[dash pattern=on 0pt off 1.3pt, line width=.8pt, line cap=round, color=arccolour] (350:2.1em) -- (350:2.5em);
\node[font=\scriptsize] at (79:.7) {.};
\node[font=\scriptsize] at (59:.7) {.};
\node[font=\scriptsize] at (39:.7) {.};
\node[font=\scriptsize,left=-.3ex] at (252:3) {$\gamma_0$};
\node[font=\scriptsize,right=-.3ex] at (288:3) {$\gamma_1$};
\node[font=\scriptsize,color=stopcolour] at (270:1em) {$\bullet$};
\node[font=\scriptsize, overlay] at (270:5) {$p_1$};
\node[font=\scriptsize, overlay] at (90:1.7) {$q$};
\node[font=\small] at (-7,-2) {$\mutimes_1 (\s p_1) = \s q \quad \leftrightarrow$};
\end{scope}
\end{tikzpicture}
\caption{An orbifold disk sequence of length $2$ contributing to the differential (left) and a smooth disk sequence of length $2$, where the paths $p_1$ and $p_2$ can be boundary paths or orbifold paths (right).}
\label{fig:differential}
\end{figure}
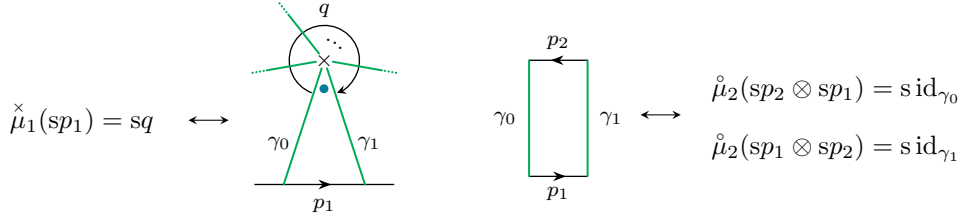

The main results on the admissible dissections we obtain in \cite{barmeierschrollwang} can be summarized as follows. 
\begin{theorem}\label{theorem:moritaequivalenceadmissible}
\begin{enumerate}
\item Let $\Delta$ be an admissible dissection of $\mathbf S = (S, \Sigma, \eta)$. Then the operations $\mubar_2$, $\mucirc_{\geq 2}$ and $\mutimes_{\geq 1}$ equip $\mathbf A_\Delta$ with the structure of a strictly unital A$_\infty$ category. 
\item Let $\mathbf S = (S, \Sigma, \eta)$ be a graded orbifold surface with stops. If $\Delta$ and $\Delta'$ are two admissible dissections of $\mathbf S$, then $\mathbf A_{\Delta}$ and $\mathbf A_{\Delta'}$ are Morita equivalent. In this way, we define the partially wrapped Fukaya category $\W (\mathbf S)$ of $\mathbf S$ as the category $\tw (\mathbf A_\Delta)^\natural$ for any admissible dissection $\Delta$. 
\end{enumerate}
\end{theorem}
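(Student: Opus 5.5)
For part (1), I will verify the A$_\infty$ relations $\mu \bullet \mu = 0$ for $\mu = \mubar_2 + \mucirc_{\geq 2} + \mutimes_{\geq 1}$ by sorting the terms of \eqref{eq:ainfinityrelations} according to which pair of operations is composed. Strict unitality is immediate from the formula for $\mubar_2$ with identities, together with the observation that $\mucirc_n$ and $\mutimes_{n-1}$ never take an identity as input. The pair $(\mubar_2, \mubar_2)$ is just associativity of concatenation. The pairs $(\mucirc, \mubar_2)$ and $(\mucirc, \mucirc)$ are the smooth case of \cite{haidenkatzarkovkontsevich}: the two boundary terms $\mubar_2(\mucirc_n(\dots)\otimes \s p)$ and $\mucirc_n$-with-$\mubar_2$-inserted cancel in pairs. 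For this I will use the standard extension of $\mucirc_n$ by linearity in first and last entries, as noted after \eqref{align:smoothdiskproduct}.

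The genuinely new terms involve $\mutimes$. First, $\mutimes\bullet\mutimes$ vanishes. The output of $\mutimes_{n-1}$ is an orbifold path $q$, and an orbifold disk sequence consists of boundary paths around a polygon not containing the orbifold point. So $q$ can enter another $\mutimes$ only if the polygon has orbifold angles, which admissibility excludes since only one polygon contains the orbifold stop.

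The main check is the cancellation between $\mubar_2 \bullet \mutimes$ and $\mutimes \bullet \mubar_2$ (together with $\mutimes_1$ as differential). Concretely, I will show that $\mutimes_1 \circ \mutimes_1 = 0$, and that $\mutimes$ is compatible with concatenation of orbifold paths. The latter means
\[
\mubar_2(\s q_2 \otimes \mutimes(\dots)) \pm \mubar_2(\mutimes(\dots)\otimes \s q_1)
\]
either vanishes or cancels against terms where $\mubar_2$ composes a boundary path into the sequence. Here I will use that the maximal orbifold path $r = q''qq'$ is unique: composing $q$ on either side with a path that would pass through the orbifold stop gives zero. The sign $(-1)^{|q''|}$ in \eqref{align:orbifolddiskproduct} is designed precisely so that these Koszul signs match. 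I expect this sign bookkeeping, and the mixed terms $\mucirc\bullet\mutimes$ where a smooth disk shares an arc with the orbifold polygon, to be the main obstacle.

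As a fallback, I would lift to the double cover. There, $\mathbf A_\Delta$ should be identified with a full subcategory of the idempotent-completed orbit category $\mathbf A_{\widetilde\Delta}/\mathbb Z_2$ of the HKK category on $\widetilde{\mathbf S}$, with $\mutimes$ arising from $\mucirc_{2n}$ on the $\mathbb Z_2$-invariant $2n$-gon after splitting invariant arcs via the orbifold stop. This would give the A$_\infty$ relations for free from the orbit category proposition.

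For part (2), I will connect any two admissible dissections by a sequence of elementary moves. These are adding or removing an arc inside a polygon, and moving the orbifold stop across an arc. For each move, one dissection's category embeds fully faithfully into the other's, and the extra arc is a twisted complex (a cone of a boundary or orbifold path) over the remaining arcs. Hence the $\tw^\natural$ categories agree. Connectivity of the move graph is a combinatorial statement on arc systems, proved as in the smooth case by reducing to a triangulation-type normal form near the orbifold point.
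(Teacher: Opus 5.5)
The paper does not prove this statement itself. It quotes \cite{barmeierschrollwang}, where $\W (\mathbf S)$ is obtained by gluing local categories over the ribbon graph dual to a dissection (global sections of a cosheaf, cf.\ \eqref{eq:viewpoints}). There the local model at an orbifold point is the $\mathbb Z_2$-orbit category of Section~\ref{section:disk}, and independence of $\Delta$ (\cite[Theorem 6.10]{barmeierschrollwang}) comes out of this local-to-global description. Your HKK-style route (check the relations directly, then use arc moves) is a genuinely different and more elementary strategy. As written, however, it breaks at the orbifold-specific steps.

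In (1), you treat $\mutimes$ as coming from a single polygon (``only one polygon contains the orbifold stop'') with an orbifold path as output. With that reading the relations fail. Take $\mathbf D_1^\times$ with:
arcs $\alpha, \beta$ from $0$ to the boundary;
an arc $\gamma$ cutting off the boundary stop;
boundary paths $p \colon \alpha \to \beta$, $b_1 \colon \beta \to \gamma$, $b_2 \colon \gamma \to \alpha$;
the orbifold stop in the polygon bounded by $\alpha, p, \beta$.
Then $\mutimes_1 (\s p) = \s q$, and the other polygon at $0$ is the smooth disk sequence $\alpha \overset{q}{\frown} \beta \overset{b_1}{\frown} \gamma \overset{b_2}{\frown} \alpha$. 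So the relation on $(\s b_2, \s b_1, \s p)$ contains $\mucirc_3 (\s b_2 \otimes \s b_1 \otimes \mutimes_1 (\s p)) = \s \, \id_\alpha$. The only term that can cancel it is $\mu_2 (\s b_2 \otimes \s (b_1 p))$. This vanishes for $\mubar_2$, and is nonzero only through the orbifold disk sequence $\alpha \overset{b_1 p}{\smile} \gamma \overset{b_2}{\smile} \alpha$ cut out by $\{ \alpha, \gamma \}$ alone (the union of both polygons at $0$), whose output is $\pm \s \, \id_\alpha$.

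The missing idea is that orbifold disk sequences are polygons cut out by \emph{subsets} of $\Delta$. These include unions of the stop polygon with stop-free neighbours across arcs through $0$. Such unions give outputs $q$ with $q', q''$ nontrivial (the only case where $(-1)^{|q''|}$ matters) or with $q$ trivial. The mixed terms you flag as the main obstacle cancel precisely against them.

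Your fallback is also misdescribed, for three reasons. Lifts of arcs through $0$ all meet at the fixed point, so they do not form an HKK arc system on $\widetilde{\mathbf S}$. The stop polygon lifts to two polygons with a corner at the fixed point, not to an invariant $2n$-gon; an invariant $2n$-gon would give $\muotimes_{2n}$ as in Section~\ref{section:example}. And $\mutimes$ appears only on a minimal model after splitting invariant objects by $\id^\pm$. Matching it with the explicit formulas is therefore the homotopy-transfer computation behind Theorem~\ref{theorem:splitgenerator}~(2), not something obtained for free.

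In (2), the move ``shift the orbifold stop across an arc $\gamma$ through $0$'' adds or removes no object. So there is neither a full embedding nor an extra arc to express as a cone. Instead, the orbifold paths at $\gamma$ change direction, the orbifold disk sequences change, and $\gamma$ now stands for a different, in general non-isomorphic object on the same arc (the other summand of its invariant lift, cf.\ Remark~\ref{remark:tagging}). You need one of two fixes. Either give an explicit twisted complex over $\mathbf A_\Delta$ for this object, or factor the move as deleting $\gamma$ and re-inserting it with the stop on the other side. The second option requires keeping every intermediate dissection admissible; deleting $\gamma$ can leave a single polygon at $0$, so an auxiliary arc through $0$ may have to be added first.

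For an arc insertion $\Delta \subset \Delta'$, the claimed full embedding $\mathbf A_\Delta \subset \mathbf A_{\Delta'}$ also holds only because the polygons of $\Delta$ survive in $\Delta'$ as disk sequences cut out by subsets of arcs --- the same convention as above. Finally, connectivity of the move graph, which you only assert, still has to be proved.
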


\begin{remark}\label{remark:withck}
Cho and Kim \cite{chokim} use a similar but slightly different approach to construct an A$_\infty$ category $\mathcal F_\Gamma$ associated to a different kind of dissection $\Gamma$ (called {\it tagged arc system}). This category $\mathcal F_\Gamma$ turns out to be Morita equivalent to the category $\mathbf A_\Delta$ defined above. In contrast to $\mathbf A_\Delta$, the category $\mathcal F_\Gamma$ has trivial differential, since in \cite{chokim}  no orbifold disk sequences of length $2$ exist. Another main difference is that arcs connecting to orbifold points come in (isotopic) copies distinguished by a tagging in \cite{chokim} which give rise to nonisomorphic objects in $\mathcal F_\Gamma$. These nonisomorphic copies can be expressed as twisted complexes in $\mathbf A_\Delta$ in our setup (see Remark \ref{remark:tagging}). 
\end{remark}

\begin{remark}
\label{remark:tagging}
Let $\Delta$ be an admissible dissection and let $\mathbf A_\Delta$ be the associated A$_\infty$ category. To any arc $\gamma$ connecting to the orbifold point $0$ in $\Delta$ we may associate an object $\gamma'$ in $\tw (\mathbf A_\Delta)$ such that it is not isomorphic to $\gamma$ but is represented by an isotopic arc. 
\end{remark}

\newcommand{\flipdiagram}{%
\draw[line width=.5pt] (-4em,0) ++( 60:1.5em) arc[start angle= 60, end angle=-15, radius=1.5em];
\draw[line width=.5pt]  (4em,0) ++(195:1.5em) arc[start angle=195, end angle=120, radius=1.5em];
\node[font=\scriptsize,shape=circle,scale=.6] (X) at (0,4em) {};
\node[font=\scriptsize] at (0,4em) {$\times$};
\node[font=\scriptsize,color=stopcolour] at ($(0,4em)+(270:.65em)$) {$\bullet$};
\draw[line width=.75pt, color=arccolour, line cap=round] (-4em,0) ++( 45:1.5em) -- (X);
\draw[line width=.75pt, color=arccolour, line cap=round] ( 4em,0) ++(135:1.5em) -- (X);
\draw[line width=.75pt, color=arccolour, line cap=round] (X) -- ++(165:2em);
\draw[dash pattern=on 0pt off 1.3pt, line width=.8pt, line cap=round, color=arccolour] (X) ++(165:2.1em) -- ++(165:.4em);
\draw[line width=.75pt, color=arccolour, line cap=round] (X) -- ++(105:2em);
\draw[dash pattern=on 0pt off 1.3pt, line width=.8pt, line cap=round, color=arccolour] (X) ++(105:2.1em) -- ++(105:.4em);
\draw[line width=.75pt, color=arccolour, line cap=round] (X) -- ++(15:2em);
\draw[dash pattern=on 0pt off 1.3pt, line width=.8pt, line cap=round, color=arccolour] (X) ++(15:2.1em) -- ++(15:.4em);
\node[font=\scriptsize] at ($(0,4em)+(60:1em)$) {.};
\node[font=\scriptsize] at ($(0,4em)+(75:1em)$) {.};
\node[font=\scriptsize] at ($(0,4em)+(45:1em)$) {.};
}

\section{Fukaya categories of orbifold disks via orbit categories}
\label{section:disk}

In this section we show that the partially wrapped Fukaya category of an orbifold disk $\mathbf D_n^\times$ with stops and with one orbifold point is triangle equivalent to   the $\mathbb Z_2$-orbit category of the partially wrapped Fukaya category of a smooth disk with stops ($=$ the double cover of $\mathbf D_n^\times$). The orbifold disk is a local model for the neighbourhood of an orbifold point in a general orbifold surface and the orbifold disk with stops can be viewed as a new type of ``sector'' for partially wrapped Fukaya categories of general orbifold surfaces.

\subsection{An orbifold disk with stops and its double cover}
\label{subsection:orbifolddisk}

Consider the graded orbifold disk $\mathbf D^\times_n = (\mathbb D / \mathbb Z_2, \Sigma, \eta)$ with a single orbifold point $0 \in \mathbb D / \mathbb Z_2$ and $n$ boundary stops and its smooth double cover $\mathbf D_{2n}$ as in Section \ref{subsection:orbifolddisk} (see Fig.~\ref{fig:disk}).

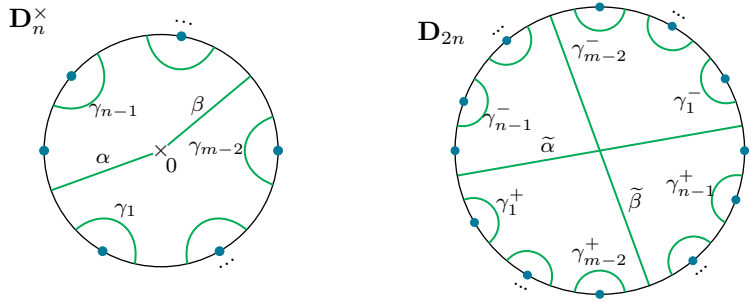
\begin{figure}[t]
\centering
\begin{tikzpicture}[x=1em,y=1em,decoration={markings,mark=at position 0.55 with {\arrow[black]{Stealth[length=4.2pt]}}}]
\draw[line width=.5pt] circle(4em);
\node[font=\small] at (-4.5em,4.5em) {$\mathbf D_n^{\times}$};
\node[font=\scriptsize,shape=circle,scale=.6,fill=white] (X) at (0,0) {};
\node[font=\scriptsize] at (0,0) {$\times$};
\draw[line width=.75pt,color=arccolour] (200:4em) to (X);
\draw[line width=.75pt,color=arccolour] (40:4em) to (X);
\node[font=\scriptsize] at (303:4.5em) {$.$};
\node[font=\scriptsize] at (297:4.5em) {$.$};
\node[font=\scriptsize] at (300:4.5em) {$.$};
\node[font=\scriptsize] at (77:4.5em) {$.$};
\node[font=\scriptsize] at (80:4.5em) {$.$};
\node[font=\scriptsize] at (83:4.5em) {$.$};
\node[font=\scriptsize] at (306:0.6em) {$0$};
\node[font=\scriptsize] at (188:2em) {$\alpha$};
\node[font=\scriptsize] at (51:2em) {$\beta$};
\node[font=\scriptsize] at (240:2.4em) {$\gamma_1$};
\node[font=\scriptsize] at (2:1.9em) {$\gamma_{m-2}$};
\node[font=\scriptsize] at (140:2.1em) {$\gamma_{n-1}$};
\foreach \a in {240,300,0,80,140} {
\draw[fill=stopcolour, color=stopcolour] (\a:4em) circle(.15em);
\path[line width=.75pt,out=\a-160,in=\a+160,looseness=1.5,color=arccolour] (\a+17:4em) edge (\a-17:4em);
}
\draw[fill=stopcolour, color=stopcolour] (180:4em) circle(.15em);
\begin{scope}[xshift=15em,scale=.9]
\node[font=\small] at (-6em,4.5em) {$\mathbf D_{2n}$};
\draw[line width=.75pt, color=arccolour] (190:5.5em) to (10:5.5em);
\draw[line width=.75pt, color=arccolour] (290:5.5em) to (110:5.5em);
\draw[line width=.5pt] circle(5.5em);
\foreach \a in {210,240,270,310,340,30,60,90,130,160} {
\draw[fill=stopcolour, color=stopcolour] (\a:5.5em) circle(.15em);
\path[line width=.75pt,out=\a-170,in=\a+170,looseness=1.5,color=arccolour] (\a+10:5.5em) edge (\a-10:5.5em);
}
\draw[fill=stopcolour, color=stopcolour] (180:5.5em) circle(.15em);
\draw[fill=stopcolour, color=stopcolour] (0:5.5em) circle(.15em);
%
\node[font=\scriptsize] at (238:6em) {$.$};
\node[font=\scriptsize] at (240:6em) {$.$};
\node[font=\scriptsize] at (242:6em) {$.$};
\node[font=\scriptsize] at (308:6em) {$.$};
\node[font=\scriptsize] at (310:6em) {$.$};
\node[font=\scriptsize] at (312:6em) {$.$};
%
\node[font=\scriptsize] at (58:6em) {$.$};
\node[font=\scriptsize] at (60:6em) {$.$};
\node[font=\scriptsize] at (62:6em) {$.$};
\node[font=\scriptsize] at (128:6em) {$.$};
\node[font=\scriptsize] at (130:6em) {$.$};
\node[font=\scriptsize] at (132:6em) {$.$};
%
\node[font=\scriptsize] at (210:3.9em) {$\gamma_1^+$};
\node[font=\scriptsize] at (270:3.9em) {$\gamma_{m-2}^+$};
\node[font=\scriptsize] at (340:3.7em) {$\gamma_{n-1}^+$};
\node[font=\scriptsize] at (30:3.9em) {$\gamma_1^-$};
\node[font=\scriptsize] at (90:3.9em) {$\gamma_{m-2}^-$};
\node[font=\scriptsize] at (160:3.7em) {$\gamma_{n-1}^-$};
\node[font=\scriptsize, above=-.2ex] at (190:2em) {$\widetilde\alpha$};
\node[font=\scriptsize, right] at (110:-2em) {$\widetilde\beta$};
\end{scope}
\end{tikzpicture}
\caption{Arcs on an orbifold disk with $n$ stops (left) and the corresponding $\mathbb Z_2$-invariant (pairs of) arcs in the double cover (right).}
\label{fig:doublecover}
\end{figure}

Consider the collection of arcs $\Gamma = \{ \alpha, \beta, \gamma_1, \dotsc, \gamma_{n-1} \}$ on $\mathbf D_n^\times$ illustrated in Fig.~\ref{fig:doublecover}.
Then each $\gamma_i$ lifts to a pair of arcs $\gamma_i^+, \gamma_i^-$ in the double cover $\mathbf D_{2n}$. The arcs $\alpha, \beta$ lift to two $\mathbb Z_2$-invariant arcs $\widetilde\alpha, \widetilde\beta$ which intersect in the fixed point of the $\mathbb Z_2$-action.

\subsubsection{Twisted complexes and morphisms in the double cover}

On graded smooth surfaces, the dissections considered in \cite{haidenkatzarkovkontsevich} are by definition comprised of {\it pairwise disjoint} arcs. Since $\widetilde\alpha$ and $\widetilde\beta$ intersect in the double cover, we consider the dissection
\[
\widetilde \Gamma = \{ \widetilde\alpha, \gamma_1^\pm, \dotsc, \gamma_{n-1}^\pm \}
\]
of the double cover $\mathbf D_{2n}$ without $\widetilde \beta$.

\begin{notation}
\label{notation:smoothdisk}
We denote by $\mathbf A_{\widetilde \Gamma}$ the graded gentle algebra, viewed as a $\Bbbk$-linear graded category, associated to the dissection $\widetilde \Gamma$ as in \cite{haidenkatzarkovkontsevich, lekilipolishchuk2}. Then
\[
\tw (\mathbf A_{\widetilde \Gamma})^\natural \simeq \W (\mathbf D_{2n})
\]
is the partially wrapped Fukaya category of the smooth disk $\mathbf D_{2n}$, where $\tw (\mathbf A_{\widetilde \Gamma})$ is the DG category of twisted complexes over $\mathbf A_{\widetilde \Gamma}$.
\end{notation}

There are morphisms $p_1^\pm \in \mathbf A_{\widetilde \Gamma} (\widetilde \alpha, \gamma_1^\pm)$ and $p_i^\pm \in \mathbf A_{\widetilde \Gamma} (\gamma_{i-1}^\pm, \gamma_i^\pm)$ for $1 < i \leq n - 1$ given by boundary paths in $\mathbf D_{2n}$.

The lift $\widetilde\beta$ may be realized as the twisted complex
\begin{equation}
\label{eq:beta}
\widetilde\beta = \widetilde\alpha \oplus \bigoplus_{\substack{1 \leq i \leq m-2 \\ \ast \in \{ +, - \}}} \s^{\maltese^*_i} \gamma_i^*.
\end{equation}
Here, $\maltese_i^* =  |p^*_1|+ \dotsb + |p^*_i| - i$ for $\ast \in \{ +, - \}$. The twisted differential $\delta_{\widetilde \beta}$ is given by the maps appearing in the diagram
\begin{equation}
\label{eq:betamaps}
\begin{tikzpicture}[baseline=-2.6pt,description/.style={fill=white,inner sep=1.75pt}]
\matrix (m) [matrix of math nodes, row sep={1.2em,between origins}, text height=1.5ex, column sep={5em,between origins}, text depth=0.25ex, ampersand replacement=\&]
{
\&[.4em] \gamma_1^+ \& \gamma_{2}^+ \& \cdots \& \gamma_{m-2}^+ \\
\widetilde\alpha \& \& \& \& \\
\&       \gamma_1^- \& \gamma_{2}^- \& \cdots \& \gamma_{m-2}^- \\
};
\path[->,line width=.4pt]
(m-2-1) edge node[above=-.2ex,font=\scriptsize, overlay] {$p_1^+$} (m-1-2)
(m-2-1) edge node[below=-.2ex,font=\scriptsize, overlay] {$p_1^-$} (m-3-2)
(m-1-2) edge node[above=-.2ex,font=\scriptsize, overlay] {$p_{2}^+$} (m-1-3)
(m-3-2) edge node[below=-.3ex,font=\scriptsize, overlay] {$p_{2}^-$} (m-3-3)
(m-1-3) edge node[above=-.2ex,font=\scriptsize, overlay] {$p_{3}^+$} (m-1-4)
(m-3-3) edge node[below=-.3ex,font=\scriptsize, overlay] {$p_{3}^-$} (m-3-4)
(m-1-4) edge node[above=-.2ex,font=\scriptsize, overlay] {$p_{m-2}^+$} (m-1-5)
(m-3-4) edge node[below=-.3ex,font=\scriptsize, overlay] {$p_{m-2}^-$} (m-3-5)
;
\end{tikzpicture}
\end{equation}
where for simplicity we have omitted the extra shifts, cf.\ \eqref{eq:beta}.

Note that the twisted complex $\widetilde\beta$ is $\mathbb Z_2$-invariant since $-1 \cdot \gamma_i^\pm = \gamma_i^\mp, \ -1 \cdot p_i^\pm = p_i^\mp$ and $-1 \cdot \widetilde\alpha = \widetilde\alpha$. The $\mathbb Z_2$-invariance of $\widetilde \beta$ is also apparent from its graphical representation in Fig.~\ref{fig:doublecover}.

\subsection{The partially wrapped Fukaya category of an orbifold disk}
\label{subsection:partiallyorbifolddisk}

Let $\mathbf D_n^\times$ be the graded orbifold disk as in Section \ref{subsection:orbifolddisk} and let $\mathbf A_{\widetilde \Gamma}$ be the graded gentle algebra in Notation \ref{notation:smoothdisk}. The $\mathbb Z_2$-action on $\mathbf A_{\widetilde\Gamma}$ induces a $\mathbb Z_2$-action on the DG category $\tw (\mathbf A_{\widetilde\Gamma})$. We now denote the idempotent completion of the A$_\infty$ orbit category by
\[
\W (\mathbf D_n^\times) := (\tw (\mathbf A_{\widetilde\Gamma})/\mathbb Z_2)^\natural \simeq (\W (\mathbf D_{2n}) / \mathbb Z_2)^\natural.
\]

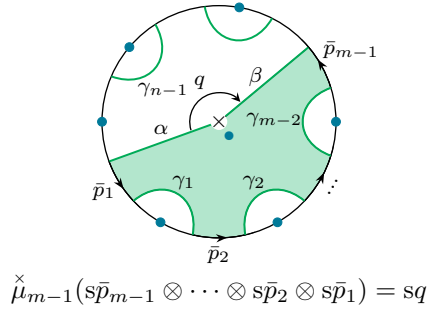
\begin{figure}
\centering
\begin{tikzpicture}[x=1em,y=1em,decoration={markings,mark=at position 0.55 with {\arrow[black]{Stealth[length=4.2pt]}}}]
\draw[fill=arccolour!30!white,line width=0pt] (200:4em) arc[start angle=200, end angle=240-17, radius=4em] to[out=240+160, in=240-160, looseness=1.5] (240+17:4em) arc[start angle=240+17, end angle=300-17, radius=4em] to[out=300+160, in=300-160, looseness=1.5] (300+17:4em) arc[start angle=300+17, end angle=360-17, radius=4em] to[out=360+160, in=360-160, looseness=1.5] (360+17:4em) arc[start angle=360+17, end angle=400, radius=4em] to (0,0);
\draw[->, line width=.5pt] (197:1em) arc[start angle=197, end angle=43, radius=1em];
\node[font=\scriptsize] at (116:1.5em) {$q$};
\draw[line width=.5pt] circle(4em);
\node[font=\scriptsize,shape=circle,scale=.6,fill=white] (X) at (0,0) {};
\node[font=\scriptsize] at (0,0) {$\times$};
\draw[line width=.75pt,color=arccolour] (200:4em) to (X);
\draw[line width=.75pt,color=arccolour] (40:4em) to (X);
\node[font=\scriptsize, color=stopcolour] at (306:0.6em) {$\bullet$};
\node[font=\scriptsize] at (188:2em) {$\alpha$};
\node[font=\scriptsize] at (51:2em) {$\beta$};
\node[font=\scriptsize] at (240:2.4em) {$\gamma_1$};
\node[font=\scriptsize] at (300:2.4em) {$\gamma_2$};
\node[font=\scriptsize] at (2:1.9em) {$\gamma_{m-2}$};
\node[font=\scriptsize] at (150:2.2em) {$\gamma_{n-1}$};
\draw[line width=0pt,postaction={decorate}] (200:4em) arc[start angle=200, end angle=240-17+5, radius=4em];
\node[font=\scriptsize] at (211.5:4.6em) {$\bar p_1$};
\draw[line width=0pt,postaction={decorate}] (240+17:4em) arc[start angle=240+17, end angle=300-17+5, radius=4em];
\node[font=\scriptsize] at (270:4.6em) {$\bar p_2$};
\draw[line width=0pt,postaction={decorate}] (300+17:4em) arc[start angle=300+17, end angle=360-17+5, radius=4em];
\node[font=\scriptsize] at (330:4.5em) {$.$};
\node[font=\scriptsize] at (327:4.5em) {$.$};
\node[font=\scriptsize] at (333:4.5em) {$.$};
\draw[line width=0pt,postaction={decorate}] (17:4em) arc[start angle=17, end angle=40+5, radius=4em];
\node[font=\scriptsize] at (28.5:5.15em) {$\bar p_{m-1}$};
\foreach \a in {240,300,0,80,140} {
\draw[fill=stopcolour, color=stopcolour] (\a:4em) circle(.15em);
\path[line width=.75pt,out=\a-160,in=\a+160,looseness=1.5,color=arccolour] (\a+17:4em) edge (\a-17:4em);
}
\draw[fill=stopcolour, color=stopcolour] (180:4em) circle(.15em);
\node[font=\small] at (0,-5.75em) {$\mutimes_{m-1} (\s \bar p_{m-1} \otimes \dotsb \otimes \s \bar p_2 \otimes \s \bar p_1) = \s q$};
\end{tikzpicture}
\caption{The unique higher product on an orbifold disk where the ``orbifold stop'' near the orbifold point indicates that there is no morphism from $\beta$ to $\alpha$.}
\label{fig:doublecoverdisk}
\end{figure}

Below, let us analyse some morphism spaces in $\tw (\mathbf A_{\widetilde\Gamma})/\mathbb Z_2$. For this, we write the morphisms spaces $(\tw (\mathbf A_{\widetilde\Gamma})/\mathbb Z_2) (X, Y)$ as column vectors, where the top entry corresponds to a morphism $X \to Y$ and the bottom entry to a morphism $X \to -1 \cdot Y$. The composition of morphisms (cf.\ Section \ref{subsection:orbitcategories}) is given by
\begin{align}
\label{align:compositionorbit}
\begin{pmatrix} a_1 \\ a_2 \end{pmatrix} \circ \begin{pmatrix} b_1 \\ b_2 \end{pmatrix} = \begin{pmatrix} a_1\circ b_1 + (-1\cdot a_2) \circ b_2 \\ a_2 \circ b_1 +  (-1\cdot a_1) \circ b_2 \end{pmatrix}
\end{align}
where $\circ$ on the right hand side of the equality is the composition in $\tw (\mathbf A_{\widetilde\Gamma})$.

Since $-1 \cdot \widetilde \alpha = \widetilde \alpha$ it follows that 
\[
(\tw (\mathbf A_{\widetilde\Gamma})/\mathbb Z_2) (\widetilde \alpha, \widetilde \alpha) := \bigoplus_{g \in \mathbb Z_2} \tw (\mathbf A_{\widetilde\Gamma}) (\widetilde \alpha, g \cdot \widetilde \alpha) = \tw (\mathbf A_{\widetilde\Gamma})(\widetilde \alpha, \widetilde \alpha)^{\oplus 2}
\]
is $2$-dimensional with trivial differential. We can therefore find two primitive orthogonal idempotents in $(\tw (\mathbf A_{\widetilde\Gamma})/\mathbb Z_2) (\widetilde \alpha, \widetilde \alpha)$ 
\[
\id_{\widetilde \alpha}^+ =  \frac{1}{2} \begin{pmatrix} \id_{\widetilde \alpha} \\ \id_{\widetilde \alpha} \end{pmatrix} \quad \text{and}  \quad  \id_{\widetilde \alpha}^- = \frac{1}{2} \begin{pmatrix} \id_{\widetilde \alpha}\\ -\id_{\widetilde \alpha} \end{pmatrix}.
\]
Similarly, since $-1 \cdot \widetilde \beta = \widetilde \beta$ we have $(\tw (\mathbf A_{\widetilde\Gamma})/\mathbb Z_2) (\widetilde \beta, \widetilde \beta) = \tw (\mathbf A_{\widetilde\Gamma})(\widetilde \beta, \widetilde \beta)^{\oplus 2}$ with two primitive orthogonal idempotents
\[
\id_{\widetilde \beta}^+ = \frac{1}{2} \begin{pmatrix} \id_{\widetilde \beta} \\ \id_{\widetilde \beta}\end{pmatrix} \quad \text{and}  \quad  \id_{\widetilde \beta}^- = \frac{1}{2} \begin{pmatrix} \id_{\widetilde \beta} \\ -\id_{\widetilde \beta}\end{pmatrix}.
\]
Here $\id_{\widetilde \beta}$ is the identity on the twisted complex $\widetilde \beta$ \eqref{eq:beta}.

The existence of primitive orthogonal idempotents implies that both $\widetilde \alpha$ and $\widetilde \beta$ decompose into two direct summands in the idempotent completion $(\tw (\mathbf A_{\widetilde\Gamma})/\mathbb Z_2)^\natural$ and we shall write
$$
\widetilde \alpha = \widetilde \alpha^+\oplus \widetilde \alpha^- \quad \text{and} \quad \widetilde \beta = \widetilde \beta^+ \oplus \widetilde \beta^-
$$
where
\begin{align*}
(\tw (\mathbf A_{\widetilde\Gamma})/\mathbb Z_2)^\natural (\widetilde \alpha^\pm, \widetilde \alpha^\pm) &= \Bbbk \, \id_{\widetilde \alpha}^\pm \\
(\tw (\mathbf A_{\widetilde\Gamma})/\mathbb Z_2)^\natural (\widetilde \beta^\pm, \widetilde \beta^\pm) &= \id_{\widetilde \beta}^\pm \, (\tw (\mathbf A_{\widetilde\Gamma})/\mathbb Z_2) (\widetilde \beta, \widetilde \beta) \, \id_{\widetilde \beta}^\pm.
\end{align*}
Let us consider the following direct sum of objects in the DG category $\W (\mathbf D_n^\times)$
\begin{equation}
\label{eq:generator}
\Gamma = \widetilde \alpha^+ \oplus \bigoplus_{i=1}^l \gamma_i^+\oplus \widetilde \beta^-.
\end{equation}

Then we have the following results. 
\begin{theorem}[\cite{barmeierschrollwang}]\label{theorem:splitgenerator}
\begin{enumerate}
\item The object $\Gamma$ \eqref{eq:generator} is a generator of the partially wrapped Fukaya category $\W (\mathbf D_n^\times) = (\tw (\mathbf A_{\widetilde\Gamma})/\mathbb Z_2)^\natural$.
\item The only nonzero higher A$_\infty$ product in the A$_\infty$ minimal model of $\End_{\W (\mathbf D_n^\times)} (\Gamma)$ is given by 
$\mutimes_{m-1}$ illustrated in Fig.~\ref{fig:doublecoverdisk}. 
\item For any admissible dissection $\Delta$ of $\mathbf D_n^\times$ the category $\W (\mathbf D_n^\times)$ is Morita equivalent to $\mathbf A_\Delta$.
\item We have an equivalence $\W (\mathbf D_n^\times) \simeq \tw (A)$ where $A = \Bbbk Q / I$ is the (ungraded) path algebra of a linearly ordered quiver of type $\mathrm D_{n+1}$ and $I$ is the ideal generated by all paths of length $2$. This induces a triangulated equivalence $\mathrm D\W (\mathbf D_n^\times) \simeq \per (A)$, where $\per (A)$ is the perfect derived category of $A$.
\end{enumerate}
\end{theorem}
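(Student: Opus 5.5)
The plan is to prove the four parts in order, working throughout inside the orbit category $(\tw (\mathbf A_{\widetilde\Gamma})/\mathbb Z_2)^\natural$ and using the column-vector description \eqref{align:compositionorbit} of morphisms.

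For (1) we use that $\widetilde\Gamma = \{\widetilde\alpha, \gamma_1^\pm, \dotsc, \gamma_{n-1}^\pm\}$ generates $\tw(\mathbf A_{\widetilde\Gamma})$. By part (4) of the proposition on orbit categories (char $\Bbbk = 0$), the orbit category is then generated by $\widetilde\alpha$ and the $\gamma_i^+$, since $\gamma_i^- = -1\cdot\gamma_i^+ \cong \gamma_i^+$ in $\mathbf A/\mathbb Z_2$. The object $\widetilde\alpha$ splits as $\widetilde\alpha^+ \oplus \widetilde\alpha^-$, so it remains to recover $\widetilde\alpha^-$ from $\Gamma$. In the orbit category the twisted complex \eqref{eq:beta} becomes an iterated cone built from $\widetilde\alpha$ and two copies of the $\gamma_i$, $i \le m-2$. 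We therefore apply the idempotents and check that $\widetilde\beta^-$ is an iterated cone whose $\widetilde\alpha$-part is $\widetilde\alpha^-$: the projection $\id^-_{\widetilde\beta}$ restricted to the $\widetilde\alpha$-summand equals $\id^-_{\widetilde\alpha}$, and on the pair $\gamma_i^+\oplus\gamma_i^-$ it picks out one copy of $\gamma_i$. Hence $\widetilde\alpha^- \in \langle \widetilde\beta^-, \gamma_1^+, \dotsc, \gamma_{m-2}^+\rangle$, which gives generation.

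For (2) we compute $\End(\Gamma)$ directly. The morphism spaces among $\widetilde\alpha^+$ and the $\gamma_i^+$ are the boundary paths $\bar p_i = p_i^+ + p_i^-$, suitably projected. Morphisms between $\widetilde\alpha^+$ (resp.\ the $\gamma_i^+$) and $\widetilde\beta^-$ come from the maps in \eqref{eq:betamaps}. We identify the cohomology of $\End(\Gamma)$ as a graded quiver with quadratic zero relations: it is $\widetilde{\mathrm A}$-shaped, containing the orbifold path $q\colon \alpha\to\beta$ and no morphism $\beta\to\alpha$, which matches the orbifold stop in Fig.~\ref{fig:doublecoverdisk}. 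We then run homotopy transfer. Choose a contraction on the (finite-dimensional) DG endomorphism algebra; the only trees that produce nonzero outputs are those feeding the chain $\bar p_1, \dotsc, \bar p_{m-1}$ through the twisted differential of $\widetilde\beta$. These yield $\mutimes_{m-1}(\s\bar p_{m-1}\otimes\dotsb\otimes\s\bar p_1) = \s q$, and all other trees vanish for degree and path-length reasons. \textbf{This tree bookkeeping, including the signs and the verification that no other higher products survive, is the main obstacle.} I would first treat the case $m=3$ by hand and then induct on the length of the complex \eqref{eq:beta}.

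For (3), note that (2) identifies the minimal model with $\mathbf A_{\Delta_0}$ for the particular admissible dissection $\Delta_0 = \{\alpha,\beta,\gamma_i\}$ with the orbifold stop as drawn. Theorem~\ref{theorem:moritaequivalenceadmissible}(2) then transports this to any admissible $\Delta$. For (4), we choose a different generator of the same category, namely one corresponding to a formal dissection, for instance arcs $\alpha,\beta$ both to the orbifold point together with a fan of arcs $\gamma_i$ meeting no disk sequences. Its endomorphism algebra has no higher products, because no smooth or orbifold disk sequence is cut out, so by Remark~\ref{remark:quasimoritaequivalence} it is formal. One then reads off that the resulting quiver is linearly oriented of type $\mathrm D_{n+1}$ with all length-two paths zero, concentrated in degree $0$ after choosing the line field. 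The triangulated equivalence follows from Remark~\ref{remark:perfectderived}.
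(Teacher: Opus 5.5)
The gap is in your argument for (4). Every admissible dissection of $\mathbf D_n^\times$ cuts out an orbifold disk sequence. By Definition~\ref{definition:orbifoldstop}, the polygon containing the orbifold stop contains no boundary stop. It is therefore an orbifold disk sequence of some length $k\geq 2$ and contributes a nonzero $\mutimes_{k-1}$: a differential if $k=2$, a binary product if $k=3$, a genuine higher product if $k\geq 4$. So the assertion that your dissection cuts out ``no smooth or orbifold disk sequence'' is false for every admissible $\Delta$.

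The inference to formality also fails on its own terms. Vanishing of $\mu_{\geq 3}$ does not imply formality: the DG dissection \eqref{eq:dissection4} has a non-formal $\mathbf A_\Delta$ whose cohomology carries a $\mu_3$. Moreover, Remark~\ref{remark:quasimoritaequivalence} presupposes formality rather than establishing it. To make your route work you would have to fix a specific dissection, invoke Theorem~\ref{theorem:formal} (or check formality by hand), and compute cohomology with $\mutimes_1$ doing real work. For example, suppose $\alpha,\beta$ bound an orbifold-stop $2$-gon with boundary path $r\colon\alpha\to\beta$, and $\gamma_1,\dotsc,\gamma_{n-1}$ are small arcs around $n-1$ of the stops. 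Then $\mutimes_1(\s r)=\pm\s q$ cancels $r$ against the orbifold path $q$. The arrow $\alpha\to\gamma_1$ of the $\mathrm D_{n+1}$ quiver then appears only in cohomology, as the composite of $r$ with the boundary path $\beta\to\gamma_1$ (they concatenate at the unique boundary endpoint of $\beta$). None of this is in your sketch, and this route also depends on (3), hence on the unfinished transfer computation in (2).

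The simplest repair avoids dissections entirely. $\mathbf A_{\widetilde\Gamma}$ is a graded gentle algebra without higher products: two arms $\widetilde\alpha\to\gamma_1^\pm\to\dotsb\to\gamma_{n-1}^\pm$ with all length-two paths zero. Hence $\mathbf A_{\widetilde\Gamma}/\mathbb Z_2$ is again a graded category without higher products. After splitting $\widetilde\alpha=\widetilde\alpha^+\oplus\widetilde\alpha^-$, the objects $\widetilde\alpha^\pm,\gamma_1^+,\dotsc,\gamma_{n-1}^+$ satisfy:
$\Hom(\widetilde\alpha^\pm,\gamma_1^+)$ is spanned by $(p_1^+,\pm p_1^-)^{\mathrm T}$; $\Hom(\gamma_{i-1}^+,\gamma_i^+)=\Bbbk\,p_i^+$; there are no other non-identity morphisms; and all length-two composites vanish. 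This is exactly the linearly oriented radical-square-zero $\mathrm D_{n+1}$, and its grading can be removed by shifting objects since $\mathrm D_{n+1}$ is a tree. Part (4) of the proposition in Section~\ref{subsection:orbitcategories} then gives (4) at once.

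Doing this first also streamlines (1) and (2). Your splitting of $\widetilde\beta$ is correct and exhibits $\widetilde\beta^-$ as the twisted complex $\widetilde\alpha^-\to\gamma_1^+\to\dotsb\to\gamma_{m-2}^+$ over $A$. In the transfer for $\End(\Gamma)$, the relevant homotopies are then the inclusions of its components, producing the single Massey product $\mutimes_{m-1}$.

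Two small corrections to (2). First, it requires $m\geq 3$. Second, the cohomology is not purely quadratic monomial: the composite $\gamma_{m-2}^+\to\widetilde\beta^-\to\gamma_{m-1}^+$ equals $p_{m-1}^+\neq 0$. This matches the nonzero composite through the boundary endpoint of $\beta$ in $\mathbf A_{\Delta_0}$.
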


\begin{remark}
Theorem \ref{theorem:splitgenerator} shows that the partially wrapped Fukaya category $\W (\mathbf D_n^\times)$ can be described by a quiver of type $\mathrm D_{n+1}$.

Although it is natural to consider the case of full quadratic monomial relations in the algebra $A = \Bbbk Q / I$, by changing the dissection of the orbifold disk one can reverse arrows in $Q$ and remove relations of $I$ without changing the derived equivalence class of $A$. This can be shown algebraically, but it is also a straightforward consequence of the geometric approach developed in \cite{barmeierschrollwang}.
\end{remark}

Theorem \ref{theorem:splitgenerator} (2) gives higher structures in terms of curves in the orbit category of a double cover. If we had worked directly on the orbifold disk, then we could have simply defined
\begin{equation}
\label{eq:mutimesunique1}
\mu_{m-1} (\s \bar p_{m-1 \dotsc 1}) = \s q
\end{equation}
as in Fig.~\ref{fig:doublecoverdisk}. This type of higher structure is a local model and can be generalized to arbitrary graded orbifold surfaces with stops.

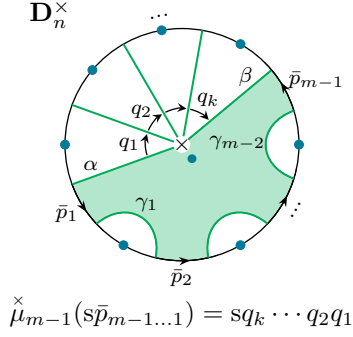
\begin{figure}
\centering
\begin{tikzpicture}[x=1em,y=1em,decoration={markings,mark=at position 0.55 with {\arrow[black]{Stealth[length=4.2pt]}}}]
\draw[fill=arccolour!30!white,line width=0pt] (200:4em) arc[start angle=200, end angle=240-17, radius=4em] to[out=240+160, in=240-160, looseness=1.5] (240+17:4em) arc[start angle=240+17, end angle=300-17, radius=4em] to[out=300+160, in=300-160, looseness=1.5] (300+17:4em) arc[start angle=300+17, end angle=360-17, radius=4em] to[out=360+160, in=360-160, looseness=1.5] (360+17:4em) arc[start angle=360+17, end angle=400, radius=4em] to (0,0);
\draw[->, line width=.5pt] (197:1.25em) arc[start angle=197, end angle=163, radius=1.25em];
\draw[->, line width=.5pt] (157:1.25em) arc[start angle=157, end angle=123, radius=1.25em];
\draw[->, line width=.5pt] (117:1.25em) arc[start angle=117, end angle=83, radius=1.25em];
\draw[->, line width=.5pt] (77:1.25em) arc[start angle=77, end angle=43, radius=1.25em];
\node[font=\scriptsize] at (180:1.8em) {$q_1$};
\node[font=\scriptsize] at (140:1.8em) {$q_2$};
\node[font=\scriptsize] at (60:1.8em) {$q_k$};
\draw[line width=.5pt] circle(4em);
\node[font=\small] at (-4.5em,4.5em) {$\mathbf D_n^\times$};
\node[font=\scriptsize,shape=circle,scale=.6,fill=white] (X) at (0,0) {};
\node[font=\scriptsize] at (0,0) {$\times$};
\draw[line width=.75pt,color=arccolour] (200:4em) to (X);
\draw[line width=.75pt,color=arccolour] (40:4em) to (X);
\draw[line width=.75pt,color=arccolour] (160:4em) to (X);
\draw[line width=.75pt,color=arccolour] (120:4em) to (X);
\draw[line width=.75pt,color=arccolour] (80:4em) to (X);
\node[font=\scriptsize] at (103:4.5em) {$.$};
\node[font=\scriptsize] at (100:4.5em) {$.$};
\node[font=\scriptsize] at (97:4.5em) {$.$};
\node[font=\scriptsize, color=stopcolour] at (306:0.6em) {$\bullet$};
\node[font=\scriptsize] at (193:3.2em) {$\alpha$};
\node[font=\scriptsize] at (47:3.2em) {$\beta$};
\node[font=\scriptsize] at (240:2.4em) {$\gamma_1$};
\node[font=\scriptsize] at (2:1.9em) {$\gamma_{m-2}$};
\draw[line width=0pt,postaction={decorate}] (200:4em) arc[start angle=200, end angle=240-17+5, radius=4em];
\node[font=\scriptsize] at (211.5:4.6em) {$\bar p_1$};
\draw[line width=0pt,postaction={decorate}] (240+17:4em) arc[start angle=240+17, end angle=300-17+5, radius=4em];
\node[font=\scriptsize] at (270:4.6em) {$\bar p_2$};
\draw[line width=0pt,postaction={decorate}] (300+17:4em) arc[start angle=300+17, end angle=360-17+5, radius=4em];
\node[font=\scriptsize] at (330:4.5em) {$.$};
\node[font=\scriptsize] at (327:4.5em) {$.$};
\node[font=\scriptsize] at (333:4.5em) {$.$};
\draw[line width=0pt,postaction={decorate}] (17:4em) arc[start angle=17, end angle=40+5, radius=4em];
\node[font=\scriptsize] at (25.5:5.1em) {$\bar p_{m-1}$};
\foreach \a in {240,300,0} {
\draw[fill=stopcolour, color=stopcolour] (\a:4em) circle(.15em);
\path[line width=.75pt,out=\a-160,in=\a+160,looseness=1.5,color=arccolour] (\a+17:4em) edge (\a-17:4em);
}
\draw[fill=stopcolour, color=stopcolour] (180:4em) circle(.15em);
\draw[fill=stopcolour, color=stopcolour] (140:4em) circle(.15em);
\draw[fill=stopcolour, color=stopcolour] (100:4em) circle(.15em);
\draw[fill=stopcolour, color=stopcolour] (60:4em) circle(.15em);
\node[font=\small] at (0,-5.75em) {$\mutimes_{m-1} (\s \bar p_{m-1 \dotsc 1}) = \s q_k \dotsb q_2 q_1$};
\end{tikzpicture}
\caption{The unique higher product on an orbifold disk where the orbifold path from $\alpha$ to $\beta$ factorizes as a composition of orbifold paths between arcs connecting to the orbifold point.}
\label{fig:orbifolddisk}
\end{figure}

Note that we may replace the arcs $\gamma_m, \dotsc, \gamma_{n-1}$ by arcs $\gamma\mathrlap{'}_m, \dotsc, \gamma\mathrlap{'}_{n-1}$, where $\gamma\mathrlap{'}_{m-1}$ is the cone of the morphism from $\beta$ to $\gamma_{m-1}$ and similarly $\gamma\mathrlap{'}_m$ is the cone of the morphism from $\gamma'_{m-1}$ to $\gamma_m$ and so on (see Fig.~\ref{fig:orbifolddisk}). One may check that $\alpha \toarg{q} \beta$ factorizes into $q = q_k \dotsb q_2 q_1$ and the unique higher product involving the arcs $\alpha, \beta, \gamma_1, \dotsc, \gamma_{m-2}, \gamma\mathrlap{'}_{m-1}, \dotsc, \gamma\mathrlap{'}_{n-1}$ is still given by
\begin{equation}
\label{eq:mutimesunique2}
\mu_{m-1} (\s \bar p_{{m-1} \dotsc 1}) = \s q = \s q_k \dotsb q_2 q_1.
\end{equation}

\section{A new type of dissection of an orbifold disk}\label{section:example}

Recall from Definition \ref{definition:orbifoldstop} that orbifold points must be connected by at least one arc in an admissible dissection. In this section, we consider a new type of dissection for orbifold surfaces, where some of the polygons are orbifold polygons without stops but with an interior orbifold point. For this type of dissection we may also obtain an A$_\infty$ category. We show that this A$_\infty$ category is usually non-formal except when all orbifold polgyons are $1$-gons in which case the corresponding A$_\infty$ category is (the category associated to) a skew-gentle algebra. In the following we only consider the case of the orbifold disk which serves as a local model for the new type of dissection for orbifold surface.

Let $\mathbf D_n^\times$ be an orbifold disk with one orbifold point and $n$ boundary stops. Let us consider the dissection $\widetilde \Gamma' =\{\gamma^{\pm 1}, \dotsc, \gamma^{\pm n}\}$ consisting of the $2n$ arcs on the double cover $\mathbf D_{2n}$ illustrated in Fig.~\ref{fig:doublecoveranotherarcs}. 

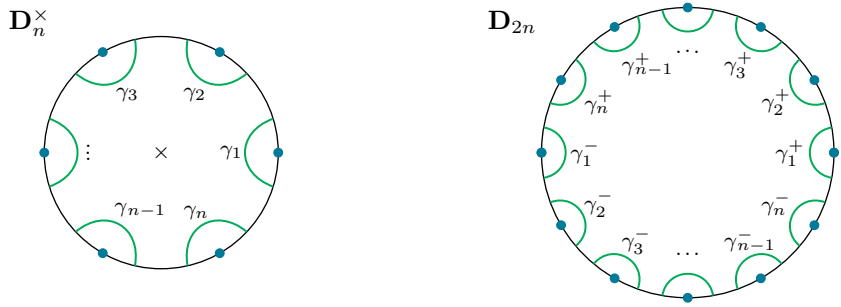
\begin{figure}
\centering
\begin{tikzpicture}[x=1em,y=1em,decoration={markings,mark=at position 0.55 with {\arrow[black]{Stealth[length=4.2pt]}}}]
\draw[line width=.5pt] circle(4em);
\node[font=\small] at (-4.5em,4.5em) {$\mathbf D_n^\times$};
\node[font=\scriptsize,shape=circle,scale=.6,fill=white] (X) at (0,0) {};
\node[font=\scriptsize] at (0,0) {$\times$};
\node[font=\scriptsize] at (175:2.5em) {$.$};
\node[font=\scriptsize] at (180:2.5em) {$.$};
\node[font=\scriptsize] at (185:2.5em) {$.$};
\node[font=\scriptsize] at (0:2.4em) {$\gamma_1$};
\node[font=\scriptsize] at (60:2.3em) {$\gamma_{2}$};
\node[font=\scriptsize] at (120:2.3em) {$\gamma_3$};
\node[font=\scriptsize] at (235:2.4em) {$\gamma\mathrlap{_{n-1}}$};
\node[font=\scriptsize] at (300:2.4em) {$\gamma_n$};

\foreach \a in {240,300,0,60,120, 180} {
\draw[fill=stopcolour, color=stopcolour] (\a:4em) circle(.15em);
\path[line width=.75pt,out=\a-160,in=\a+160,looseness=1.5,color=arccolour] (\a+17:4em) edge (\a-17:4em);
}
\begin{scope}[xshift=18em]
\node[font=\small, overlay] at (-6em,4.5em) {$\mathbf D_{2n}$};
\draw[line width=.5pt, overlay] circle(5em);
\foreach \a in {0, 30,60,90,120,150, 180, 210,240,270,300,330} {
\draw[fill=stopcolour, color=stopcolour] (\a:5em) circle(.15em);
\path[line width=.75pt,out=\a-170,in=\a+170,looseness=1.5,color=arccolour, overlay] (\a+10:5em) edge (\a-10:5em);
}
\draw[fill=stopcolour, color=stopcolour] (180:5em) circle(.15em);
\draw[fill=stopcolour, color=stopcolour] (0:5em) circle(.15em);
%
\node[font=\scriptsize] at (265:3.5em) {$.$};
\node[font=\scriptsize] at (270:3.5em) {$.$};
\node[font=\scriptsize] at (275:3.5em) {$.$};
\node[font=\scriptsize] at (85:3.5em) {$.$};
\node[font=\scriptsize] at (90:3.5em) {$.$};
\node[font=\scriptsize] at (95:3.5em) {$.$};
\node[font=\scriptsize] at (0:3.5em) {$\gamma_1^+$};
\node[font=\scriptsize] at (30:3.5em) {$\gamma_2^+$};
\node[font=\scriptsize] at (60:3.5em) {$\gamma_{3}^+$};
\node[font=\scriptsize] at (120:3.5em) {$\gamma\mathrlap{_{n-1}}^+$};
\node[font=\scriptsize] at (150:3.5em) {$\gamma_n^+$};
\node[font=\scriptsize] at (180:3.5em) {$\gamma_{1}^-$};
\node[font=\scriptsize] at (210:3.5em) {$\gamma_2^-$};
\node[font=\scriptsize] at (240:3.5em) {$\gamma_3^-$};
\node[font=\scriptsize] at (300:3.5em) {$\gamma\mathrlap{_{n-1}}^-$};
\node[font=\scriptsize] at (330:3.5em) {$\gamma_n^-$};
\end{scope}
\end{tikzpicture}
\caption{A new non-formal dissection on an orbifold disk (left) and the corresponding $\mathbb Z_2$-invariant dissection of the double cover (right).}
\label{fig:doublecoveranotherarcs}
\end{figure}

It follows from \cite{haidenkatzarkovkontsevich} that $\widetilde \Gamma'$ is a generator of the partially wrapped Fukaya category $\W (\mathbf D_{2n})$. Note that $\mathbf A_{\widetilde \Gamma'}$ is the A$_\infty$ algebra given by the building block $(\widetilde{\mathrm A}_{2n-1}, \mucirc_{2n})$ as in Remark \ref{remark:homologicalsmoothness}. 

Denoting by $p_1^+, \dotsc, p_n^+, p_1^-, \dotsc, p_n^-$ the arrows in $\widetilde{\mathrm A}_{2n-1}$, the Poincaré--Hopf index formula (see Section \ref{subsection:grading}) implies that 
\begin{equation}
\label{eq:poincarehopf}
|p_1^+|+ |p_2^+| + \dotsb +|p_n^+| + |p_1^-|+ |p_2^-| + \dotsb +|p_n^-| = 2n - 2.
\end{equation}
By \eqref{align:smoothdiskproduct}, the nontrivial higher products are given by 
\begin{align*}
\mucirc_{2n} (\s p_{i-1}^\pm \otimes \dotsb \otimes \s p_1^\pm \otimes \s p_n^\mp  \otimes \dotsb \otimes \s p_1^\mp \otimes \s p_n^\pm \otimes \dotsb \otimes \s p_{i+1}^\pm \otimes \s p_i^\pm) &= \s \, \id_{\gamma_i^\pm} 
\end{align*}
for each $1\leq i\leq n$, where the indexing is cyclic, e.g.\ $\gamma_0^- = \gamma_n^+$. By \cite{haidenkatzarkovkontsevich} we obtain a triangle equivalence $\tw(\mathbf A_{\widetilde \Gamma'})^\natural \simeq \W (\mathbf D_{2n})$.

Since the dissection $\widetilde \Gamma'$ is $\mathbb Z_2$-invariant, it descends to a collection of arcs $\Gamma' =\{\gamma_1, \dotsc, \gamma_n\}$ on the orbifold disk $\mathbf D_n^\times$. Note that $\Gamma'$ is not an admissible dissection in the sense of Definition \ref{definition:orbifoldstop} as there are no arcs in $\Gamma'$ connecting to the orbifold point as illustrated in Fig.~\ref{fig:doublecoveranotherarcs}. 

Nevertheless, we now show that the collection $\Gamma'$ still generates the partially wrapped Fukaya category $\W (\mathbf D_n^\times)$. The endomorphism algebra $\mathbf A_{\Gamma'} = \W (\mathbf D_n^\times)(\Gamma', \Gamma')$ of $\Gamma'$ is given by the graded quiver of type $\widetilde{\mathrm A}_{n-1}$ modulo the ideal of relations generated by all length $2$ paths. We denote by $\bar p_1, \dotsc, \bar p_n$ its arrows, where $\bar p_i$ lifts to $p_i^\pm$. Up to homotopy, we may choose the line field to be $\mathbb Z_2$-invariant, whence \eqref{eq:poincarehopf} implies $|\bar p_1| + \dotsb + |\bar p_n| = n-1$. The induced A$_\infty$ higher products on $\mathbf A_{\Gamma'}$, which we denote by a new symbol $\muotimes_{2n}$, are given by 
\[
\muotimes_{2n}(\s \bar p_{i-1} \otimes \dotsb \otimes \s \bar p_1 \otimes \s \bar p_{n} \otimes \dotsb \otimes \s \bar p_1 \otimes \s \bar p_n \otimes \dotsb \otimes \s \bar p_{i+1} \otimes  \s \bar p_i ) = \s \, \id_{\gamma_i}
\]
for each $1\leq i \leq n$. For $n \geq 1$, these are precisely the building blocks \eqref{eq:orbifoldbuildingblocks}.

\begin{theorem}
\label{theorem:newdissection}
For all $n \geq 1$ there is an equivalence $\tw (\mathbf A_{\Gamma'}) \simeq \W (\mathbf D_n^\times)$, in particular $\Gamma'$ is a generator of $\W(\mathbf D_n^\times)$. 

We thus have a triangle equivalence $\per (\mathbf A_{\Gamma'}) \simeq \per(A)$, giving a derived equivalence between the algebra of type $\widetilde{\mathrm A}_{n-1}$ with full quadratic relations equipped with the A$_\infty$ structure $\muotimes_{2n}$ and the path algebra $A$ of an ungraded quiver of type $\mathrm D_{n+1}$.
\end{theorem}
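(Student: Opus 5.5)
The plan is to work entirely on the double cover $\mathbf D_{2n}$ and pass to $\mathbb Z_2$-orbit categories, using the definition $\W (\mathbf D_n^\times) = (\tw (\mathbf A_{\widetilde\Gamma})/\mathbb Z_2)^\natural$ from Section \ref{subsection:partiallyorbifolddisk}. The chain of equivalences I would aim for is
\[
(\tw (\mathbf A_{\widetilde\Gamma})/\mathbb Z_2)^\natural \simeq (\tw (\mathbf A_{\widetilde\Gamma'})/\mathbb Z_2)^\natural \simeq \tw (\mathbf A_{\widetilde\Gamma'}/\mathbb Z_2)^\natural \simeq \tw (\mathbf A_{\Gamma'})^\natural .
\]
The second equivalence is item (4) of the proposition on orbit categories in Section \ref{subsection:orbitcategories}; it applies since $\mathrm{char}\, \Bbbk = 0$.

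\textbf{Step 1 (equivariant comparison of the two dissections).} Both $\widetilde\Gamma$ and $\widetilde\Gamma'$ are $\mathbb Z_2$-invariant dissections of $\mathbf D_{2n}$, and by \cite{haidenkatzarkovkontsevich} both give $\W (\mathbf D_{2n})$. I need the comparison to be $\mathbb Z_2$-equivariant. I would do this by hand inside $\tw (\mathbf A_{\widetilde\Gamma'})$.
\begin{itemize}
\item Each arc $\gamma_i^\pm \in \widetilde\Gamma$ with $i \leq n-1$ is already, up to relabelling, one of the arcs of $\widetilde\Gamma'$.
\item The $\mathbb Z_2$-invariant arc $\widetilde\alpha$ is realized as an iterated cone along the chain $\gamma_1^+ \to \dotsb \to \gamma_n^+$ of boundary paths, by the same mechanism as \eqref{eq:beta}.
\end{itemize}
This realization gives a fully faithful A$_\infty$ functor $\mathbf A_{\widetilde\Gamma} \to \tw (\mathbf A_{\widetilde\Gamma'})$ whose image generates. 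I would check that the functor can be chosen to be strictly $\mathbb Z_2$-equivariant, meaning that the twisted complex for $\widetilde\alpha$ is sent to itself by $-1$, up to a canonical isomorphism that squares to the identity. Equivariance then makes the functor induce the first equivalence above on orbit categories.

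\textbf{Step 2 (computing the orbit category).} In $\mathbf A_{\widetilde\Gamma'}/\mathbb Z_2$ the objects $\gamma_i^+$ and $\gamma_i^- = -1 \cdot \gamma_i^+$ are isomorphic, via the component $\id \in \mathbf A(\gamma_i^+, -1\cdot\gamma_i^-)$. Hence $\mathbf A_{\widetilde\Gamma'}/\mathbb Z_2$ is quasi-equivalent to its full subcategory on $\gamma_1^+, \dotsc, \gamma_n^+$, and the last equivalence in the chain follows from Remark \ref{remark:quasimoritaequivalence}.

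On this subcategory, the morphism spaces $\bigoplus_g \mathbf A(\gamma_i^+, g\gamma_j^+)$ have basis the identities together with one arrow $\bar p_j$ for each pair of consecutive arcs. The arrow $\bar p_j$ is represented by $p_j^+$ or $p_j^-$, according to which lift starts at a $+$ arc; at the wrap-around, the arrow from $\gamma_n^+$ goes to $\gamma_1^- = -1 \cdot \gamma_1^+$. Using the orbit-category composition formula:
\begin{itemize}
\item all compositions of two arrows vanish, as in $\mathbf A_{\widetilde\Gamma'}$;
\item the only higher products come from $\mucirc_{2n}$, which goes twice around the $n$ orbit-arrows.
\end{itemize}
This reproduces exactly $\muotimes_{2n}$, so the full subcategory is $\mathbf A_{\Gamma'}$. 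Along the way I would verify that the signs come out as $+1$, using the $\mathbb Z_2$-invariant line field and the grading constraint $|\bar p_1| + \dotsb + |\bar p_n| = n-1$.

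\textbf{Step 3 (removing the $^\natural$ and the comparison with type $\mathrm D_{n+1}$).} Combining Steps 1 and 2 gives $\tw (\mathbf A_{\Gamma'})^\natural \simeq \W (\mathbf D_n^\times)$, so $\Gamma'$ is a generator. Theorem \ref{theorem:splitgenerator} (4) then yields $\per (\mathbf A_{\Gamma'}) \simeq \per (A)$, by Remark \ref{remark:perfectderived}. To drop $^\natural$, I would argue as follows.
\begin{itemize}
\item $\per(A)$ is Krull--Schmidt with finitely many indecomposables up to shift, since $A$ is representation-finite of Dynkin type.
\item Every indecomposable object is a cone built from the $\gamma_i$. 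Concretely, the direct summands $\widetilde\alpha^\pm$ and $\widetilde\beta^\pm$ are themselves realized as explicit twisted complexes over $\Gamma'$: I expect cones of $\bar p$-chains that are closed up by a $\muotimes_{2n}$-term in the twisted differential.
\end{itemize}
These explicit realizations show that $\H^0\tw(\mathbf A_{\Gamma'})$ is already idempotent complete.

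The main obstacle is Step 1. The difficulty is producing a genuinely strict $\mathbb Z_2$-equivariant comparison, rather than one equivariant only up to homotopy, so that the orbit-category construction applies. If that step proves awkward, the fallback is to verify generation directly in $\W (\mathbf D_n^\times)$: there I would exhibit $\widetilde\alpha^+$, $\widetilde\beta^-$ and the $\gamma_i^+$ of \eqref{eq:generator} as twisted complexes over $\Gamma'$, and invoke Theorem \ref{theorem:splitgenerator} (1).
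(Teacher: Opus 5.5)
Your overall strategy is a genuinely different route from the paper's: you identify $\mathbf A_{\Gamma'}$ with the orbit category $\mathbf A_{\widetilde\Gamma'}/\mathbb Z_2$ and compare dissections on the double cover. Step~2 is correct, but Step~3 fails outright and Step~1 is not yet a proof.

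\textbf{Step 3 is false.} $\H^0\tw(\mathbf A_{\Gamma'})$ is \emph{not} idempotent complete, so the $^\natural$ cannot be dropped.
\begin{itemize}
\item Every object of $\tw(\mathbf A_{\Gamma'})$ is an iterated cone of shifts of $\gamma_1,\dots,\gamma_n$, so its Grothendieck group is generated by $n$ classes.
\item $K_0(\per(A))\cong\mathbb Z^{n+1}$ is free on the $n+1$ indecomposable projectives of the type $\mathrm D_{n+1}$ algebra.
\item Hence no triangle equivalence $\H^0\tw(\mathbf A_{\Gamma'})\simeq\per(A)$ exists. For $n=1$ this is transparent: $\mathbf A_{\Gamma'}=\Bbbk[\epsilon]/(\epsilon^2-1)$ on one object, and the Euler characteristic of any complex of free modules lies on the diagonal of $K_0=\mathbb Z^2$.
\end{itemize}
Concretely, under your identification $\mathbf A_{\widetilde\Gamma'}/\mathbb Z_2\simeq\mathbf A_{\Gamma'}$, your complex $T^+=(\gamma_1^+\to\dotsb\to\gamma_n^+)$ representing $\widetilde\alpha$ becomes the paper's twisted complex $\gamma_0$. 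Its endomorphism algebra has cohomology $\Bbbk[\epsilon]/(\epsilon^2-1)$ with $\epsilon=\s^{|\bar p_n|}\bar p_n$. Up to shift, $\widetilde\alpha^\pm$ are the summands cut out by the idempotents $\tfrac12(\id\pm\epsilon)$, and they exist only in $\tw(\mathbf A_{\Gamma'})^\natural$. By the $K_0$ count, no twisted complex over $\Gamma'$ is isomorphic to them, whatever $\muotimes_{2n}$-terms enter its Maurer--Cartan equation.

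So the statement must be read as $\tw(\mathbf A_{\Gamma'})^\natural\simeq\W(\mathbf D_n^\times)$. That is exactly what the paper's proof gives: it works with a generator of $\tw(\mathbf A_{\Gamma'})^\natural$ and concludes $\per(\mathbf A_{\Gamma'})\simeq\mathrm D\W(\mathbf D_n^\times)$. The same objection breaks your fallback for Step~1: $\widetilde\alpha^+$ can only be exhibited as a direct summand of a twisted complex over $\Gamma'$, not as one.

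\textbf{Step 1 is not strictly equivariant, as you suspected.} $-1\cdot T^+=T^-$ is a different twisted complex, so no functor sending the strictly fixed object $\widetilde\alpha$ to $T^+$ commutes with the action. A merely weakly equivariant A$_\infty$ functor does not directly induce a functor of orbit categories.

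A clean repair is to use $\widetilde\Gamma\cup\widetilde\Gamma'=\{\widetilde\alpha,\gamma_1^\pm,\dots,\gamma_n^\pm\}$.
\begin{itemize}
\item Its arcs are pairwise disjoint, since $\widetilde\alpha$ runs through the central $2n$-gon of $\widetilde\Gamma'$, and the collection is $\mathbb Z_2$-invariant.
\item Disk sequences are polygons cut out by \emph{subsets} of arcs, so $\mathbf A_{\widetilde\Gamma}$ and $\mathbf A_{\widetilde\Gamma'}$ (with its $\mucirc_{2n}$) are strictly $\mathbb Z_2$-stable full subcategories of $\mathbf A_{\widetilde\Gamma\cup\widetilde\Gamma'}$.
\item Each generates by \cite{haidenkatzarkovkontsevich}, and both inclusions pass directly to orbit categories.
\end{itemize}
A minor point in Step~2: for $n=1$, $\mucirc_2$ is binary, so not all compositions vanish. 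The relation $\bar p_1^2=\id$ is precisely $\muotimes_2$.

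\textbf{Comparison with the paper.} With these repairs your argument is valid. The paper never passes to the double cover. It works inside $\tw(\mathbf A_{\Gamma'})$: it computes $\epsilon^2=\id_{\gamma_0}$ from $\muotimes_{2n}$ and shows that $X=\gamma_0\oplus\gamma_1\oplus\dotsb\oplus\gamma_{n-1}$ generates $\tw(\mathbf A_{\Gamma'})^\natural$. The endomorphism algebra of $X$ is a formal graded skew-gentle algebra, which becomes the type $\mathrm D_{n+1}$ algebra after splitting $\epsilon$, and the paper then applies Theorem~\ref{theorem:splitgenerator}~(4). The paper's route is purely algebraic and yields an explicit skew-gentle generator. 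Yours explains why $\W(\mathbf D_n^\times)(\Gamma',\Gamma')$ carries $\muotimes_{2n}$ in the first place, at the cost of the equivariance bookkeeping.
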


\begin{remark}
[Types $\widetilde{\mathrm A}_{n-1}$ and $\mathrm D_{n+1}$ for low values of $n$]
As $\mathrm A_0$ is the empty Dynkin diagram, the Dynkin diagram $\widetilde{\mathrm A}_0$ contains only one vertex and one loop, which is oriented in the corresponding quiver, as illustrated in \eqref{eq:buildingblocks} and \eqref{eq:skewgentlebuildingblock}.

As Dynkin diagrams, $\mathrm D_2$ is isomorphic to two copies of $\mathrm A_1$ and $\mathrm D_3$ isomorphic to $\mathrm A_3$. In particular, the corresponding algebras of type $\mathrm D_2$ and $\mathrm D_3$ are in fact isomorphic to gentle algebras and they belong to the few cases of gentle algebras admitting both a smooth surface model and an orbifold surface model. (See also \cite[Theorem 3.8]{labardinifragososchrollvaldivieso} and \cite[Section 8.4]{barmeierschrollwang}.)

Even though the corresponding algebras are isomorphic, they play a different role in the gluing construction and thus types $\mathrm D$ and $\mathrm A$ should be considered separately. For example, the type $\mathrm D_2$ sector \eqref{eq:skewgentlebuildingblock} has a single arc. Gluing it into a surface leaves the surface connected (unlike $\mathrm A_1 \sqcup \mathrm A_1$) and adds an orbifold point to the surface.
\end{remark}

\begin{proof}[Proof of Theorem \ref{theorem:newdissection}]
Let us consider the A$_\infty$ category of twisted complexes  $\tw(\mathbf A_{\Gamma'})$ of the A$_\infty$ category $\mathbf A_{\Gamma'}$ with objects $\Gamma'$. We construct a twisted complex $\gamma_0$ as 
\[
\gamma_1 \toarg{\bar p_1} \s^{\maltese_1} \gamma_2 \toarg{\bar p_2} \s^{\maltese_2} \gamma_3  \toarg{\bar p_{3}}  \dotsb \toarg{\bar p_{n-1}} \s^{\maltese_{n-1}} \gamma_n
\]
where $\maltese_i = |\bar p_1|+\dotsb+|\bar p_{i}|-i$ with twisted differential $\delta_0 = \bar p_1 + \dotsb + \bar p_{n-1}$. This is a well-defined twisted complex, namely we have (see Definition \ref{definition:twisted})
\[
\mu_i^{\mathbb Z \mathbf A_{\Gamma'}}( \s \delta_0 \otimes \dotsb \otimes \s \delta_0) = 0 \quad \text{for each $i\geq 1$}.
\]
The complex $\gamma_0$ can by represented by the arc illustrated as follows
\[
\begin{tikzpicture}[x=1em,y=1em,decoration={markings,mark=at position 0.55 with {\arrow[black]{Stealth[length=4.2pt]}}}]
\draw[line width=.5pt] circle(4em);
\node[font=\small] at (-4.5em,3.5em) {$\mathbf D_n^\times$};
\node[font=\scriptsize,shape=circle,scale=.6,fill=white] (X) at (0,0) {};
\node[font=\scriptsize] at (0,0) {$\times$};
\node[font=\scriptsize] at (177:4.5em) {$.$};
\node[font=\scriptsize] at (180:4.5em) {$.$};
\node[font=\scriptsize] at (183:4.5em) {$.$};
\node[font=\scriptsize] at (152:1.6em) {$\gamma_0$};
\node[font=\scriptsize] at (20:2.5em) {$\gamma_1$};
\node[font=\scriptsize] at (60:2.3em) {$\gamma_{2}$};
\node[font=\scriptsize] at (120:2.3em) {$\gamma_3$};
\node[font=\scriptsize] at (225:2.4em) {$\gamma\mathrlap{_{n-1}}$};
\node[font=\scriptsize] at (290:2.4em) {$\gamma_n$};
\foreach \a in {240,290,10,60,120, 180} {
\draw[fill=stopcolour, color=stopcolour] (\a:4em) circle(.15em);
\path[line width=.75pt,out=\a-160,in=\a+160,looseness=1.5,color=arccolour] (\a+17:4em) edge (\a-17:4em);
}
\draw[line width=.75pt, color=arccolour, line cap=round] (-15:4em) to[out=155, in=150, looseness=9.8] (-40:4em);
\end{tikzpicture}
\]
We compute the A$_\infty$ minimal model of the A$_\infty$ endomorphism algebra $\tw(\mathbf A_{\Gamma'})(\gamma_0, \gamma_0)$. First, note that the cohomology $\H^\bullet (\tw (\mathbf A_{\Gamma'}) (\gamma_0, \gamma_0))$ is $2$-dimensional with a basis $\{\id_{\gamma_0}, \epsilon \}$, where $\id_{\gamma_0}$ corresponds to the identity of $\gamma_0$ and $\epsilon := \s^{|\bar p_n|} \bar p_n$ is given by the vertical map
\begin{equation*}
\begin{tikzpicture}[baseline=-2.6pt,description/.style={fill=white,inner sep=1.75pt}]
\matrix (m) [matrix of math nodes, row sep={3em,between origins}, text height=1.5ex, column sep=2em, text depth=0.25ex, ampersand replacement=\&]
{
\gamma_1 \& \s^{\maltese_1} \gamma_2 \& \cdots \& \s^{\maltese_{n-1}} \gamma_n \\
\& \& \& \gamma_1 \& \s^{\maltese_1} \gamma_2 \& \cdots \& \s^{\maltese_{n-1}} \gamma_n \\
};
\path[->,line width=.4pt]
(m-1-1) edge node[above=-.2ex,font=\scriptsize, overlay] {$\bar p_1$} (m-1-2)
(m-1-2) edge node[above=-.2ex,font=\scriptsize, overlay] {$\bar p_2$} (m-1-3)
(m-1-3) edge node[above=-.2ex,font=\scriptsize, overlay] {$\bar p_{n-1}$} (m-1-4)
(m-1-4) edge node[left=-.2ex,font=\scriptsize] {$\s^{|p_n|} \bar p_n$} (m-2-4)
(m-2-4) edge node[above=-.2ex,font=\scriptsize] {$\bar p_1$} (m-2-5)
(m-2-5) edge node[above=-.2ex,font=\scriptsize] {$\bar p_2$} (m-2-6)
(m-2-6) edge node[above=-.2ex,font=\scriptsize] {$\bar p_{n-1}$} (m-2-7)
;
\end{tikzpicture}
\end{equation*}
Here we use the fact that $\maltese_{n-1}+|\bar p_n|=0$. 
The self-composition of $\epsilon$ in $\tw(\mathbf A_{\Gamma'})$ is given by
\begin{multline*}
\mu_2^{\tw(\mathbf A_{\Gamma'})}(\s^{|\bar p_n|+1} \bar p_n \otimes \s^{|\bar p_n|+1} \bar p_n) \\[.2em]
\begin{aligned}
&= \sum_{i, j\geq 0} \mu_{2n} ( (\s \delta_0)^{\otimes i} \otimes \s^{|\bar p_n|+1} \bar p_n \otimes (\s \delta_0)^{\otimes j} \otimes \s^{|\bar p_n|+1} \bar p_n \otimes (\s \delta_0)^{\otimes {2n-2-i-j}}) \\
&= \sum_{i=0}^{n-1} \mu_{2n} ( \s \bar p_{i ... 1} \otimes \s^{|\bar p_n|+1} \bar p_n \otimes \s \bar p_{n-1 ... 1} \otimes \s^{|\bar p_n|+1} \bar p_n \otimes \s \bar p_{n-1 ... i+1}) \\
&= \sum_{i=0}^{n-1}\s \, \id_{\gamma_{i+1}} = \s \, \id_{\gamma_0}.
\end{aligned}
\end{multline*}

Consider the direct sum $X =\gamma_0 \oplus \gamma_1 \oplus \dotsb \oplus \gamma_{n-1}$ of objects in $\tw(\mathbf A_{\Gamma'})$. Note that $X$ is a new generator of  $\tw(\mathbf A_{\Gamma'})^\natural$.
Moreover, the minimal A$_\infty$ algebra of $\tw(\mathbf A_{\Gamma'})(X, X)$ is a graded algebra given by the quiver
\[
\begin{tikzpicture}[x=3em, y=1em, baseline=-2.6pt,description/.style={fill=white,inner sep=1pt,outer sep=0}]
\draw[line width=.5pt, fill=black] (0,0) circle(.1em);
\draw[line width=.5pt, fill=black] (.75,0) circle(.1em);
\draw[line width=.5pt, fill=black] (1.5,0) circle(.1em);
\draw[line width=.5pt, fill=black] (3,0) circle(.1em);
\node[circle, minimum size=.1em, outer sep=2pt, inner sep=0] (0) at (0,0) {};
\node[circle, minimum size=.1em, outer sep=2pt, inner sep=0] (1) at (.75,0) {};
\node[circle, minimum size=.1em, outer sep=2pt, inner sep=0] (2) at (1.5,0) {};
\node[circle, minimum size=.1em, outer sep=2pt, inner sep=0, font=\scriptsize] (3) at (2.25,0) {...};
\node[circle, minimum size=.1em, outer sep=2pt, inner sep=0] (n-1) at (3,0) {};
\path[-stealth] (0) edge[out=60, in=120, looseness=20] node[pos=.7, font=\scriptsize, left=-.3ex, overlay] {$\epsilon$} (0) (0) edge node[font=\scriptsize, below=-.2ex, overlay] {$\bar p_0$} (1) (1) edge node[font=\scriptsize, below=-.2ex, overlay] {$\bar p_1$} (2) (2) edge node[font=\scriptsize, below=-.2ex, overlay] {$\bar p_2$} (3) (3) edge node[font=\scriptsize, below=-.2ex] {$\bar p_{n-2}$} (n-1);
\draw[line cap=round, dash pattern=on 0pt off 1.2pt, line width=.6pt] (.75,0) +(-.65em,.12em) to[bend left=62, looseness=1.22] +(.55em,.12em) (1.5,0) +(-.65em,.12em) to[bend left=62, looseness=1.22] +(.55em,.12em);
\end{tikzpicture}
\]
with relations $\epsilon^2 = \id_{\gamma_0}$ and $\bar p_i \bar p_{i-1} = 0$ for $1 \leq i \leq n-2$, which is a graded skew-gentle algebra with $\mathrm{Sp} = \{ \epsilon \}$ (see Section \ref{subsection:skewgentle} for the definition). Combining with Theorem \ref{theorem:splitgenerator} (4) and Theorem \ref{corollary:formalgenerator} we obtain that $\per (\mathbf A_{\Gamma'}) \simeq \mathrm D \W (\mathbf D_n^\times)$.

Combining this with Theorem \ref{theorem:splitgenerator} (4), we also obtain the triangle equivalence $\per(\mathbf A_{\Gamma'}) \simeq \per (A)$.
\end{proof}

We may thus revise Definition \ref{definition:orbifoldstop} as follows. 

\begin{definition}
\label{definition:newdissection}
Let $\Gamma$ be a finite collection of arcs on an orbifold disk $\mathbf D_n^\times = (\mathbb D / \mathbb Z_2, \Sigma, \eta)$. We say that $\Gamma$ is a {\it dissection} of $\mathbf D_n^\times$ if the complement $(\mathbb D / \mathbb Z_2) \backslash \Gamma$ is decomposed into its connected components as follows
\[
(\mathbb D / \mathbb Z_2) \smallsetminus \Gamma = \bigsqcup_{i} P_{v_i}
\]
where 
\begin{enumerate}
\item Each $P_{v_i}$ is either a smooth disk containing at most one boundary stop {\it or an orbifold disk containing no boundary stops}.
\item If $P_{v_{i_1}}, \dotsc, P_{v_{i_k}}$ are the connected components around the orbifold point $0$, we either have $k = 1$ such that $P_{v_1}$ contains an orbifold point, or $k \geq 2$ and at least one of the $P_{v_{i_l}}$'s does not contain any boundary stop, whence we may choose one of them to contain the orbifold stop.
\end{enumerate}
\end{definition}

As before, we refer to the $P_{v_i}$'s as {\it polygons} if they are smooth disks. An {\it orbifold polygon} is a polygon with an interior orbifold point.

This definition generalizes to a more general notion of dissection for an arbitrary graded orbifold surface $\mathbf S$. Note that the condition in (2) is empty if all orbifold points are contained in orbifold polygons.

By Theorem \ref{theorem:newdissection} we may associate an A$_\infty$ category $\mathbf A_\Gamma$ to each (new) dissection $\Gamma$ such that $\tw (\mathbf A_\Gamma)^\natural \simeq \W (\mathbf S)$. Note that if the dissection $\Gamma$ gives rise to an orbifold $m$-gon $P_{v_i}$, then $\mathbf A_\Gamma$ is non-formal except when $m = 1$, i.e.\ when the orbifold polygon is a $1$-gon.

\section{Fukaya categories of orbifold disks via deformations}

In this section, we will show that the partially wrapped Fukaya category of an orbifold disk $\mathbf D_n^\times$ can be viewed as an A$_\infty$ deformation of the partially wrapped Fukaya category of a cylinder $\widetilde{\mathbf S}$ with one stop on one boundary component.

\subsection{Hochschild cohomology and A$_\infty$ deformations}\label{subsection:hhdefor}
Let $A$ be a graded algebra. Recall that the Hochschild cochain complex of $A$ is given by 
\[
\mathrm{C}^\bullet(A, A) = \prod_{i \geq 0} \Hom((\s A)^{\otimes i},  A)
\]
equipped with the differential
\begin{equation*}
\begin{split}
\delta(f)(\s a_{n+1...1}) &= -(-1)^{{\maltese_1} |f|}  f(\s a_{n+1 ... 2}) a_1 + (-1)^{|f|+\maltese_{n}} a_{n+1} f(\s a_{n...1}) \\
&\quad-\sum_{i=2}^{n+1}(-1)^{|f| + \maltese_{i-1}} f(\s a_{n+1...i+1} \otimes \s (a_{i} a_{i-1}) \otimes \s a_{i-2...1})
\end{split}
\end{equation*}
where $f \in \Hom((\s A)^{\otimes n+1},  A)$ and $\maltese_i = |a_1| + \dotsb + |a_i| - i$ as usual. The $i$th Hochschild cohomology $\HH^i (A, A)$ of $A$ is defined as the $i$th cohomology of $\mathrm C^\bullet(A, A)$. That is, for any $i \in \mathbb Z$
\[
\HH^i(A, A) = \mathrm H^i (\mathrm C^\bullet(A, A)).
\]

The Gerstenhaber bracket $[-{,} -]$ on $\mathrm{C}^\bullet(A, A)$ plays an essential role for the deformation theory of $A$. Let $f \in \Hom((\s A)^{\otimes m},  A)$ and $g \in \Hom((\s A)^{\otimes n},  A)$. The Gerstenhaber bracket of $f$ and $g$ is given by 
\[
[f, g] = f \bullet g - (-1)^{(|f|-1)(|g|-1)} g \bullet f
\]
where 
$f \bullet g \in \Hom((\s A)^{\otimes m+n-1},  A)$ is given by 
{\small
\begin{align*}
f \bullet g (\s a_{m+n-1...1}) &= 
\sum_{i=1}^{m} (-1)^{\maltese_{i-1} (|g|-1)}  f(\s a_{m+n-1...i+n} \otimes \s g(\s a_{i+n-1...i}) \otimes \s a_{i-1...1}).
\end{align*}
}

\begin{remark}
The multiplication of $A$ induces an element $\mu \in \Hom^2((\s A)^{\otimes 2}, A)$ given by 
\[
\mu(\s a \otimes \s b) = (-1)^{|b|} ab \qquad \text{for $a, b \in A$}.
\]
Then the differential $\delta$ of $\mathrm{C}^\bullet(A, A)$ can be expressed as $\delta(f) = [\mu, f]$.
\end{remark}

\begin{definition}
\label{definition:ainfinitydeformation}
Let $(A, \mu_2)$ be a graded algebra. A (strict) {\it A$_\infty$ deformation} of $(A, \mu_2)$ is an A$_\infty$ algebra $(A, \widetilde \mu_1, \mu_2 + \widetilde \mu_2, \widetilde \mu_3, \dotsc)$ extending the original multiplication $\mu_2$. 

We say that two A$_\infty$ deformations $(A, \mu_2 + \widetilde \mu\,)$ and $(A, \mu_2 + \widetilde \mu')$ are equivalent if they are isomorphic as A$_\infty$ algebras.  Here $\widetilde \mu = \widetilde \mu_1 + \widetilde \mu_2 + \widetilde \mu_3 + \dotsb$.
\end{definition} 

\begin{remark}
If $(A, \mu_2 + \widetilde \mu\,)$ is an A$_\infty$ deformation of $A$ then we may view $\widetilde \mu = \widetilde \mu_1 + \widetilde \mu_2 + \widetilde \mu_3 + \dotsb$ as an element in $\mathrm C^2(A, A)$ (up to shifts). Then $ \mu_2 + \widetilde \mu$ satisfies the A$_\infty$ relations \eqref{eq:ainfinityrelations} if and only if the Maurer--Cartan equation holds (compare to Remark \ref{remark:ainfinity})
\[
\delta(\widetilde \mu) + \frac{1}{2} [ \widetilde \mu, \widetilde \mu] =0.
\]
In deformation theory, one often considers infinitesimal or formal deformations, i.e.\ over a local Artinian $\Bbbk$-algebra such as $\Bbbk [t] / (t^n)$ or over a complete local Noetherian $\Bbbk$-algebra such as $\Bbbk \llbracket t \rrbracket$, in the context of deformations of graded gentle algebras, it is possible to work with the stronger notion of a ``strict'' deformation (see \cite[Section~3]{barmeierschrollwang2}). This strict deformation can be viewed as being obtained from a formal deformation over $\Bbbk \llbracket t \rrbracket$, for instance by considering $(A \llbracket t \rrbracket, t \widetilde \mu_1, \mu + t \widetilde \mu_2, t \widetilde \mu_3, \dotsc)$, checking that all products restrict to $A [t]$ and evaluating $t \mapsto 1$.
\end{remark}

\subsection{A$_\infty$ deformations of graded gentle algebras}
\label{subsection:deformations}

Let $\mathbf S_{1,n} = (S, \Sigma, \eta)$ be a cylinder with one stop on one boundary component whose winding number is $1$, and $n$ stops in the other boundary component, as illustrated in Fig.~\ref{fig:cylinder}.

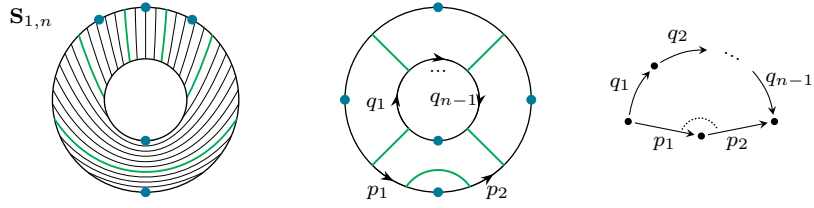
\begin{figure}
\centering
\begin{tikzpicture}[x=1em,y=1em, decoration={markings,mark=at position .66 with {\arrow[black]{Stealth[length=4.8pt]}}}]
\node[font=\scriptsize,left] at (-2.8,2.8) {$\mathbf S_{1,n}$};
\begin{scope}[rotate=-90]
\draw[line width=.5pt] (0,0) circle(3.186em);
\node[font=\scriptsize] at (0,0) {$\times$};
\draw[line width=.2pt] (180:3.18) to (0,0);
\draw[line width=.2pt] (2*19:3.18) to[bend left=26] (-2*19:3.18);
\draw[line width=.2pt] (2*27:3.18) to[bend left=40] (-2*27:3.18);
\draw[line width=.2pt] (2*33.3:3.18) to[bend left=50,looseness=1.07] (-2*33.3:3.18);
\draw[line width=.7pt, color=arccolour, line cap=round] (2*38.9:3.18) to[bend left=57,looseness=1.19] (-2*38.9:3.18);
\draw[line width=.2pt] (2*43.6:3.18) to[bend left=60,looseness=1.34] (-2*43.6:3.18);
\draw[line width=.2pt] (2*48:3.18) to[bend left=62,looseness=1.51] (-2*48:3.18);
\draw[line width=.2pt] (2*52.2:3.18) to[bend left=64,looseness=1.69] (-2*52.2:3.18);
\draw[line width=.2pt] (2*56.2:3.18) to[bend left=66,looseness=1.9] (-2*56.2:3.18);
\draw[line width=.2pt] (2*60:3.18) to[bend left=68,looseness=2.13] (-2*60:3.18);
\draw[line width=.2pt] (2*63.5:3.18) to[bend left=70,looseness=2.39] (-2*63.5:3.18);
\draw[line width=.7pt, color=arccolour, line cap=round] (2*67.1:3.18) to[bend left=72,looseness=2.73] (-2*67.1:3.18);
\draw[line width=.2pt] (2*70.5:3.18) to[bend left=74,looseness=3.14] (-2*70.5:3.18);
\draw[line width=.2pt] (2*73.9:3.18) to[bend left=76,looseness=3.72] (-2*73.9:3.18);
\draw[line width=.2pt] (2*77.1:3.18) to[bend left=78,looseness=4.52] (-2*77.1:3.18);
\draw[line width=.2pt] (2*80.3:3.18) to[bend left=82,looseness=5.78] (-2*80.3:3.18);
\draw[line width=.7pt, color=arccolour, line cap=round] (2*83.5:3.18) to[bend left=86,looseness=8.3] (-2*83.5:3.18);
\draw[line width=.2pt] (2*86.8:3.18) to[bend left=89,looseness=16.15] (-2*86.8:3.18);
\draw[line width=.5pt,fill=white] (0,0) circle(1.41em);
\draw[fill=stopcolour, color=stopcolour] (0:3.186em) circle(.15em);
\draw[fill=stopcolour, color=stopcolour] (150:3.186em) circle(.15em);
\draw[fill=stopcolour, color=stopcolour] (180:3.186em) circle(.15em);
\draw[fill=stopcolour, color=stopcolour] (210:3.186em) circle(.15em);
\draw[fill=stopcolour, color=stopcolour] (0:1.41em) circle(.15em);
\end{scope}
\begin{scope}[xshift=10em]
\node[font=\scriptsize,shape=circle,scale=.6] (X) at (0,0) {};
\draw[line width=0,postaction={decorate}] (225:3.186em) arc[start angle=225, end angle=250, radius=3.186em];
\draw[line width=0,postaction={decorate}] (290:3.186em) arc[start angle=290, end angle=315, radius=3.186em];
\draw[line width=0,postaction={decorate}] (225:1.41em) arc[start angle=225, end angle=145, radius=1.41em];
\draw[line width=0,postaction={decorate}] (100:1.41em) arc[start angle=100, end angle=70, radius=1.41em];
\draw[line width=0,postaction={decorate}] (30:1.41em) arc[start angle=30, end angle=-30, radius=1.41em];
\draw[line width=.5pt] (0,0) circle(3.186em);
\draw[line width=.5pt] (0,0) circle(1.41em);
\draw[fill=stopcolour,color=stopcolour] (-90:1.41em) circle(.15em);
\draw[fill=stopcolour,color=stopcolour] (0:3.186em) circle(.15em);
\draw[fill=stopcolour,color=stopcolour] (90:3.186em) circle(.15em);
\draw[fill=stopcolour,color=stopcolour] (180:3.186em) circle(.15em);
\draw[fill=stopcolour,color=stopcolour] (270:3.186em) circle(.15em);
\path[line width=.75pt,color=arccolour, line cap=round] (315:1.41em) edge (315:3.186em);
\path[line width=.75pt,color=arccolour, line cap=round] (225:1.41em) edge (225:3.186em);
\path[line width=.75pt,color=arccolour, line cap=round] (135:1.41em) edge (135:3.186em);
\path[line width=.75pt,color=arccolour, line cap=round] (45:1.41em) edge (45:3.186em);
\path[line width=.75pt,color=arccolour, line cap=round] (250:3.186em) edge[bend left=55, looseness=1.1] (290:3.186em);
\node[font=\scriptsize] at (184:2.1em) {$q_1$};
\node[font=\tiny] at (90:1em) {...};
\node[font=\scriptsize] at (0:.55em) {$q_{n-1}$};
\node[font=\scriptsize, overlay] at (237.5:3.8em) {$p_1$};
\node[font=\scriptsize, overlay] at (302.5:3.8em) {$p_2$};
\end{scope}
\begin{scope}[xshift=19em, yshift=-.75em]
\draw[line width=.5pt, fill=black] (180:2.5em) circle(.1em);
\draw[line width=.5pt, fill=black] (270:.5em) circle(.1em);
\draw[line width=.5pt, fill=black] (0:2.5em) circle(.1em);
\draw[line width=.5pt, fill=black] (130:2.5em) circle(.1em);
\node[circle, minimum size=.1em, outer sep=2pt, inner sep=0] (0) at (180:2.5em) {};
\node[font=\tiny] at (70:2.5em) {.};
\node[font=\tiny] at (65:2.5em) {.};
\node[font=\tiny] at (60:2.5em) {.};
\node[circle, minimum size=.1em, outer sep=2pt, inner sep=0] (1) at (270:.5em) {};
\node[circle, minimum size=.1em, outer sep=2pt, inner sep=0] (2) at (0:2.5em) {};
\node[circle, minimum size=.1em, outer sep=2pt, inner sep=0] (L) at (130:2.5em) {};
\node[circle, minimum size=.1em, outer sep=2pt, inner sep=0] (T) at (80:2.5em) {};
\node[circle, minimum size=.1em, outer sep=2pt, inner sep=0] (R) at (50:2.5em) {};
\path[-stealth] (0) edge node[font=\scriptsize, below=-.2ex] {$p_1$} (1) (1) edge node[font=\scriptsize, below=-.2ex] {$p_2$} (2) (0) edge[bend left=15] (L) (L) edge[bend left=15] (T) (R) edge[bend left=15] (2);
\draw[line cap=round, dash pattern=on 0pt off 1.2pt, line width=.6pt] (270:.5em) +(-.65em,.24em) to[bend left=62, looseness=1.22] +(.55em,.24em);
\node[font=\scriptsize] at (155:3.05em) {$q_1$};
\node[font=\scriptsize] at (105:3.05em) {$q_2$};
\node[font=\scriptsize] at (25:3.35em) {$q_{n-1}$};
\end{scope}
\end{tikzpicture}
\caption{A cylinder $\mathbf S_{1,n}$ graded by a line field with winding number $1$ around the inner boundary component (left) with a dissection $\Delta$, redrawn (middle) with labels for the boundary paths, and the corresponding category $\mathbf A_\Delta$ (right).}
\label{fig:cylinder}
\end{figure}

Following \cite{haidenkatzarkovkontsevich}, the partially wrapped Fukaya category $\W (\mathbf S)$ of the smooth surface $\mathbf S$ admits formal generators given by admissible dissections. 

In order to study the deformation theory of $\W (\mathbf S)$ we only need a single (formal) generator, since Keller \cite{keller03} shows that the Hochschild complex of $\W (\mathbf S)$ is isomorphic to that of $\mathbf A_\Gamma = \End (\Gamma)$ for any generator $\Gamma$ of $\W (\mathbf S)$. In particular, the results of \cite{keller03} yield the following isomorphism
\begin{equation}
\label{equation:isomorphismhh}
\HH^\bullet (\W (\mathbf S), \W (\mathbf S)) \simeq \HH^\bullet (\End (\Gamma), \End (\Gamma))
\end{equation}

\begin{lemma} 
Let $\mathbf S_{1,n} = (S, \Sigma, \eta)$ be a cylinder with one stop on one boundary component whose winding number is $1$, and $n$ stops on the other boundary component. Then $\HH^2 (\W (\mathbf S_{1,n}), \W (\mathbf S_{1,n}))$ is $1$-dimensional. 
\end{lemma}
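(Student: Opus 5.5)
By \eqref{equation:isomorphismhh} it suffices to compute $\HH^2 (\mathbf A_\Delta, \mathbf A_\Delta)$ for the formal generator given by the dissection $\Delta$ of Fig.~\ref{fig:cylinder}. Since the inner boundary component carries a single stop, no disk sequences occur and $\mathbf A_\Delta$ is a graded gentle algebra. Its quiver has $n+1$ vertices: the arcs $\gamma_1, \dotsc, \gamma_n$ joining the two boundary components, and the small arc cutting off one outer stop. The arrows are the outer boundary paths $p_1, p_2$ and the inner boundary paths $q_1, \dotsc, q_{n-1}$. The only relation is $p_2 p_1 = 0$, and $q_{n-1} \dotsb q_1$ is a path from $\gamma_1$ to $\gamma_n$ parallel to $p_2 p_1$. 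In particular the underlying quiver is acyclic, of type $\widetilde{\mathrm A}_n$ with nonsymmetric orientation, and $\mathbf A_\Delta$ is finite-dimensional.

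I would compute $\HH^\bullet$ with Bardzell's minimal projective bimodule resolution of the monomial algebra $A = \mathbf A_\Delta$. Its terms are indexed by vertices in degree $0$, arrows in degree $1$, and the single relation $p_2 p_1$ in degree $2$. There are no overlapping relations, so the resolution stops in degree $2$. The Hochschild complex then reduces to the three-term complex
\[
\textstyle\bigoplus_{v} e_v A e_v \to \bigoplus_{a} e_{t(a)} A e_{s(a)} \to e_{t(p_2)} A e_{s(p_1)},
\]
but with the internal $\mathbb Z$-grading taken into account. A class in $\HH^2$ of total degree $2$ is a sum of pieces of bidegree (homological $i$, internal $2-i$) for $i = 0, 1, 2$.

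The internal degrees are determined by the line field. Around the inner boundary, which has winding number $1$ and one stop, the winding numbers of the segments satisfy $\sum_j \lvert q_j \rvert + \theta_{\mathrm{stop}} = 1$ up to the sign conventions of Section~\ref{subsection:grading}. The Poincaré--Hopf constraint on each quadrilateral polygon then relates $\lvert p_1 \rvert + \lvert p_2 \rvert$ to $\sum_j \lvert q_j \rvert$. The expected outcome is that the parallel path $q_{n-1} \dotsb q_1$ has degree $\lvert p_2 \rvert + \lvert p_1 \rvert - 1$ (or $+1$, depending on the orientation convention).

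With this, the piece in homological degree $2$ is spanned by the cochain sending the relation $p_2 p_1$ to the parallel path $q_{n-1} \dotsb q_1$, and it has the right total degree. This cochain is the infinitesimal deformation $p_2 p_1 = q_{n-1} \dotsb q_1$, which ``partially compactifies'' the inner boundary to an orbifold point. It is a cocycle automatically, since it lies in top homological degree. It is not a coboundary: a coboundary of a degree-$1$ cochain $f$ would have to be $p_2 f(p_1) \pm f(p_2) p_1$, but $q_{n-1} \dotsb q_1$ is not divisible by $p_1$ or $p_2$.

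It remains to show that all other contributions vanish. In homological degree $0$, $A$ has no oriented cycles, so each $e_v A e_v = \Bbbk e_v$ sits in internal degree $0$. In homological degree $1$, a cocycle of total degree $2$ sends an arrow to a parallel path of internal degree one higher. The only parallel paths are $p_2 p_1 \parallel q_{n-1} \dotsb q_1$ and the arrows themselves, the latter with shift $0$. So cochains of this type are either absent for degree reasons or are the usual Euler-type derivations, which sit in total degree $1$.

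The main obstacle is the sign and degree bookkeeping. One has to verify from the winding number $1$ condition that $\lvert q_{n-1} \dotsb q_1 \rvert = \lvert p_2 p_1 \rvert - 1$ with exactly the shift that places the class in $\HH^2$ rather than in $\HH^1$ or $\HH^3$. I would check this first for $n = 1$ and $n = 2$ by hand, and then argue in general by summing the Poincaré--Hopf relations over the polygons.
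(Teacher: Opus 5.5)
Your route is the same as the paper's: Keller's isomorphism, the dissection of Fig.~\ref{fig:cylinder}, and Bardzell's resolution. Your treatment of homological degrees $0$ and $1$ and of coboundaries is fine. However, the degree relation you single out as the crux is wrong, and as stated it would defeat the argument.

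By your own bidegree bookkeeping, the cochain $\s p_2 \otimes \s p_1 \mapsto q_{n-1} \dotsb q_1$ has total degree $2$ exactly when its internal shift is $0$. That means $\lvert q_{n-1} \dotsb q_1 \rvert = \lvert p_2 \rvert + \lvert p_1 \rvert$; equivalently, $\widetilde\mu_2$ must preserve degree, like $\mu_2$. If, as you expect, $\lvert q_{n-1} \dotsb q_1 \rvert = \lvert p_2 p_1 \rvert - 1$ (or $+1$), the class would lie in $\HH^1$ (or $\HH^3$). Since you correctly show there are no other contributions, your analysis would then give $\HH^2 = 0$, contradicting the lemma.

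The off-by-one comes from misidentifying the polygon. The arrows $p_1$, $p_2$ and the inner boundary segment carrying the stop bound the $3$-gon cut out by $\gamma_1$, the small arc and $\gamma_n$ (the type $\mathrm A_3$ sector). It is not a quadrilateral. Poincaré--Hopf there gives $\lvert p_1 \rvert + \lvert p_2 \rvert + \theta_\sigma = 3 - 2 = 1$, where $\theta_\sigma$ is the winding number of the inner segment containing the stop. The winding number $1$ of the inner boundary gives $\sum_j \lvert q_j \rvert + \theta_\sigma = 1$. Subtracting yields exact equality $\lvert q_{n-1} \dotsb q_1 \rvert = \lvert p_2 \rvert + \lvert p_1 \rvert$. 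This is precisely what places the class in $\HH^2$, and it is exactly where the hypothesis ``winding number $1$'' is used. The paper avoids this bookkeeping by taking the explicit line field of Fig.~\ref{fig:cylinder}. That line field is transverse to the boundary except on segments containing stops, so $\mathbf A_\Delta$ is concentrated in degree $0$.

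A minor point: for $n = 1$ the quiver is not acyclic. It is the $2$-cycle $\gamma_1 \rightleftarrows \gamma'$, where $\gamma'$ is the small arc, with $p_2 p_1 = 0$ but $p_1 p_2 \neq 0$, and the parallel path is $e_{\gamma_1}$. The conclusion still holds: the same computation gives $\lvert p_1 p_2 \rvert = 0$, so $e_{\gamma'} \mathbf A_\Delta e_{\gamma'}$ contributes nothing in total degree $2$.
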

\begin{proof}
We may choose the admissible dissection $\Delta$ on $\mathbf S_{1,n}$ illustrated in Fig.~\ref{fig:cylinder}. From the explicit line field given on the left of the figure, one sees that $\mathbf A_\Delta$ is concentrated in degree $0$ (cf.\ the illustrations in Section \ref{subsection:grading}) as those boundary segments where the line field is not transversal to the boundary are segments containing stops.

Using the Bardzell resolution of $\mathbf A_\Delta$, a straightforward computation yields that $\HH^2 (\mathbf A_\Delta, \mathbf A_\Delta)$ is $1$-dimensional. Its basis vector can be represented by the class of the $2$-cocycle 
\begin{align}\label{align:2cocyclemu2}
\widetilde \mu_2 (\s p_2 \otimes \s p_1) = q_n \dotsb q_2 q_1.
\end{align}
The result now follows from the isomorphism \eqref{equation:isomorphismhh}. 
\end{proof}

\begin{remark}
Note that the input of the $2$-cocycle $\widetilde \mu_2$ in \eqref{align:2cocyclemu2} comes from the relation $p_2 p_1 = 0$ in the $3$-gon (the type $\mathrm A_3$ sector graded by the line field) which contains the stop on the inner boundary component as in Fig.~\ref{fig:cylinder}. More generally, one may consider other admissible dissections $\Delta$ on $\mathbf S_{1,n}$. Necessarily, $\HH^2(A_{\Gamma}, A_{\Gamma})$ is still $1$-dimensional, but now its basis vector can be represented by a cocycle $\widetilde \mu_{m-1}$ from the special $m$-gon containing the stop on the inner boundary component.
\end{remark}

\begin{theorem}
Let $\mathbf S_{1,n}$ be a cylinder with one boundary stop $\sigma$ on one boundary component whose winding number is $1$ as above. Let $\Delta_1$ (resp.\ $\Delta_2$) be any two admissible dissections of $\mathbf S_{1,n}$ with one distinguished $m_1$-gon (resp.\ $m_2$-gon) containing $\sigma$. If $m_1, m_2 > 1$ then the algebra $\mathbf A_{\Delta_1}$ (resp.\ $\mathbf A_{\Delta_2}$) admits an A$_\infty$ deformation $(\mathbf A_{\Delta_1}, \mu_2 + \widetilde \mu_{m_1-1})$ (resp.\ $(\mathbf A_{\Delta_2}, \mu_2 + \widetilde \mu_{m_2-1})$). Moreover, we have triangle equivalences 
\[
\per((\mathbf A_{\Delta_1}, \mu_2 + \widetilde \mu_{m_1-1})) \simeq \per ((\mathbf A_{\Delta_2}, \mu_2 + \widetilde \mu_{m_2-1})) \simeq \mathrm D \W (\mathbf D_n^\times),
\]
where $\mathbf D_n^\times$ is the orbifold disk obtained from $\mathbf S_{1,n}$ by partially compactifying the inner boundary component to an orbifold point.
\end{theorem}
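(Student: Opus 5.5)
Fix an admissible dissection $\Delta$ of $\mathbf S_{1,n}$ whose distinguished $m$-gon $P$ contains the stop $\sigma$ on the inner boundary component, and write its boundary as $\gamma_0 \overset{p_1}{\smile} \gamma_1 \overset{p_2}{\smile} \dotsb \overset{p_{m-1}}{\smile} \gamma_{m-1}$, with $\sigma$ lying on the boundary segment between $\gamma_{m-1}$ and $\gamma_0$. Partially compactify the inner boundary component to an orbifold point; $\Delta$ then becomes a collection $\Delta^\times$ of arcs on $\mathbf D_n^\times$. Every arc of $\Delta$ ending on the inner boundary becomes an arc ending at the orbifold point. The morphisms of $\mathbf A_\Delta$ given by boundary paths along the inner boundary become orbifold paths at $0$. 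The stop $\sigma$ becomes an orbifold stop sitting in the image of $P$. Since $m>1$, the polygons around $0$ number at least $2$, and only the image of $P$ carries a stop, namely the orbifold stop. Hence $\Delta^\times$ is an admissible dissection in the sense of Definition \ref{definition:orbifoldstop}, and the image of $P$ is exactly an orbifold disk sequence of length $m$ as in Definition \ref{definition:disksequence}.

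First I would check that the underlying graded quiver with relations of $\mathbf A_{\Delta^\times}$ agrees with $\mathbf A_\Delta$. Objects, boundary paths and orbifold paths (the former inner-boundary paths) correspond, the gradings agree because winding numbers along the small circle $\mathrm S^1_0$ match those along the compactified boundary, and $\mubar_2$ is concatenation in both. The only further operation on $\mathbf A_{\Delta^\times}$ is then the single orbifold disk product of Fig.~\ref{fig:doublecoverdisk}. Smooth disk sequences are absent: a smooth disk sequence would need a polygon without stops not containing $0$, and there is no such polygon in $\mathbf A_\Delta$ or in $\mathbf A_{\Delta^\times}$ (here I would use that all polygons of $\Delta$ besides $P$ carry their own boundary stops, after a possible harmless choice of $\Delta$). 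That orbifold disk product is
\[
\mutimes_{m-1}(\s p_{m-1}\otimes\dotsb\otimes \s p_1) = \pm\, \s q,
\]
where $q$ is the orbifold path from $\gamma_0$ to $\gamma_{m-1}$. I define $\widetilde\mu_{m-1}$ to be this operation. Then $(\mathbf A_\Delta,\mu_2+\widetilde\mu_{m-1}) = \mathbf A_{\Delta^\times}$ as A$_\infty$ categories, and Theorem \ref{theorem:moritaequivalenceadmissible}(1) gives the A$_\infty$ relations, i.e.\ the Maurer--Cartan equation. This makes it an A$_\infty$ deformation in the sense of Definition \ref{definition:ainfinitydeformation}. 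For $m=3$ this operation is the cocycle \eqref{align:2cocyclemu2}, where $q = q_n\dotsb q_1$ is the full orbifold path.

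Second, the triangle equivalences follow from the rest of Theorem \ref{theorem:moritaequivalenceadmissible}. By part (2), or Theorem \ref{theorem:splitgenerator}(3) for the disk, $\tw(\mathbf A_{\Delta_i^\times})^\natural \simeq \W(\mathbf D_n^\times)$ for $i=1,2$. Passing to $\H^0$ and using Remark \ref{remark:perfectderived} gives $\per(\mathbf A_{\Delta_i},\mu_2+\widetilde\mu_{m_i-1}) \simeq \mathrm D\W(\mathbf D_n^\times)$ for both $i$, and hence the equivalence between the two deformed algebras.

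The main obstacle is the first step: verifying that compactification really sends $\Delta$ to an admissible dissection of $\mathbf D_n^\times$. In particular, one must check that no two arcs of $\Delta$ become homotopic, and that no arc becomes nullhomotopic, once they end at the orbifold point. Since arcs of $\Delta$ ending on the inner boundary have pairwise distinct inner endpoints separated by polygons with boundary stops, I expect them to stay non-isotopic rel the orbifold point. The grading compatibility at the orbifold point should also be checked. The hypothesis $m>1$ is exactly what excludes a $1$-gon around the inner boundary, which would leave no arc at the orbifold point and violate condition (2) of Definition \ref{definition:dissection}.
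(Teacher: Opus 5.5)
Your argument is the paper's proof: contract the inner boundary component to an orbifold point so that $\Delta$ becomes an admissible dissection $\bar\Delta$ of $\mathbf D_n^\times$, observe that $(\mathbf A_\Delta,\mu_2+\widetilde\mu_{m-1})$ is exactly $\mathbf A_{\bar\Delta}$ with $\widetilde\mu_{m-1}=\mutimes_{m-1}$, and conclude with Theorem~\ref{theorem:moritaequivalenceadmissible}. Two remarks on the step you single out as the main obstacle: pairwise non-homotopy of arcs is not required by Definition~\ref{definition:dissection} and in fact fails for $m=2$, where $\gamma_0$ and $\gamma_1$ become homotopic arcs at $0$ (exactly the configuration of Fig.~\ref{fig:differential} that produces $\mutimes_1$); and your claim that $m>1$ forces at least two polygons around $0$ really needs $\gamma_0\neq\gamma_{m-1}$, because if only one arc of $\Delta$ meets the inner boundary (e.g.\ one arc joining the two boundary components plus arcs cutting off the outer stops) then $\bar\Delta$ has a single arc at $0$ and violates condition (2) of Definition~\ref{definition:dissection}, an edge case that the paper's one-line proof also passes over.
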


\begin{proof}
Any admissible dissection $\Delta$ of $\mathbf S_{1,n}$ such that the stop $\sigma$ is not contained in a $1$-gon gives an admissible dissection $\bar\Delta$ of $\mathbf D_n^\times$ by contracting (``compactifying'') the inner boundary component to an orbifold point. The resulting A$_\infty$ algebra $(\mathbf A_\Delta, \mu_2 + \widetilde\mu_{m-1})$ coincides with $\mathbf A_{\bar\Delta}$ as defined in Section \ref{section:ainfinity}. By Theorem \ref{theorem:moritaequivalenceadmissible} (2) we obtain the desired results.
\end{proof} 

Note that the line field on the cylinder in Fig.~\ref{fig:cylinder} naturally extends to the line field on the orbifold disk in Fig.~\ref{fig:disk}. This is no coincidence, but a particular instance of a general result of \cite{barmeierschrollwang2}, where all A$_\infty$ deformations of $\W (\mathbf S)$ can be described by partial orbifold compactifications of $\mathbf S$, replacing certain boundary components with winding number $1$ by orbifold points and extending the line field to the interior of the resulting orbifold surface, producing a graded orbifold surface in the sense of Definition \ref{definition:orbifoldsurface}.

The fact that algebraic deformations of partially wrapped Fukaya categories of smooth surfaces are related to partial (orbifold) compactifications confirms a general expectation formulated (in the fully wrapped case) in Seidel's ICM 2002 address \cite{seidel1} (see Section \ref{subsection:seidel} for more details and Section \ref{subsection:examples} for concrete examples).

\section{Formal generators and graded skew-gentle algebras}
\label{section:formal}

In this final section, we introduce the notion of a {\it formal} dissection $\Delta$ of an orbifold disk $\mathbf D_n^\times$. In this case the associated A$_\infty$ category $\mathbf A_\Delta$ is formal (Definition \ref{definition:formal}). This notion allows us to construct many more associative algebras which are not skew-gentle themselves, but which are derived equivalent to skew-gentle algebras, see Section \ref{subsection:examples} below for some examples. 

\subsection{DG dissections and formal dissections}
\label{subsection:DGformal}

Let us in fact introduce two further notions of admissible dissections, namely {\it DG dissections} and {\it formal dissections}.  A DG dissection $\Delta$ is such that the corresponding A$_\infty$ category $\mathbf A_\Delta$ is a DG category  and a formal dissection is  such that $\mathbf A_\Delta$ is a formal DG category (Theorem \ref{theorem:formal}).

\begin{figure}
\centering
\begin{tikzpicture}[x=1em,y=1em,decoration={markings,mark=at position .55 with {\arrow[black]{Stealth[length=4.8pt]}}}]
\begin{scope}
\draw[line width=.5pt,postaction={decorate}] ($(252:4.5)+(-1.5,0)$) -- ($(288:4.5)+(1.5,0)$);
\node[font=\small,color=stopcolour] at ($(252:4.5)+(-.75,0)$) {$\bullet$};
\node[font=\small,color=stopcolour] at ($(288:4.5)+(.75,0)$) {$\bullet$};
\node[font=\scriptsize] at (0,0) {$\times$};
\node[font=\scriptsize,shape=circle,scale=.6] (X) at (0,0) {};
\draw[line width=.75pt, color=arccolour, line cap=round] (X) -- (252:4.5);
\draw[line width=.75pt, color=arccolour, line cap=round] (X) -- (288:4.5);
\draw[line width=.75pt, color=arccolour, line cap=round, overlay] (X) -- (190:2em);
\draw[line width=.75pt, color=arccolour, line cap=round, overlay] (X) -- (128:2em);
\draw[line width=.75pt, color=arccolour, line cap=round, overlay] (X) -- (0:2em);
\node[font=\scriptsize] at (79:.5) {.};
\node[font=\scriptsize] at (59:.5) {.};
\node[font=\scriptsize] at (39:.5) {.};
\node[font=\scriptsize,left=-.3ex] at (252:3.5) {$\gamma$};
\node[font=\scriptsize,right=-.3ex] at (288:3.5) {$\gamma'$};
\node[font=\scriptsize,color=stopcolour] at (270:1em) {$\bullet$};
\node[font=\scriptsize, overlay] at (270:5) {$p$};
\draw[->, line width=.5pt] (250:.8em) arc[start angle=250, end angle=-70, radius=.8em];
\node[font=\scriptsize] at (220:1.2) {$q_1$};
\node[font=\scriptsize] at (160:1.2) {$q_2$};
\node[font=\scriptsize] at (-25:1.3) {$q_k$};
\end{scope}
\end{tikzpicture}
\caption{An illustration of the condition ($\ast$) in the definition of a formal dissection. Along the boundary segment to which $p$ belongs, there must be a stop to the left of $\gamma$ and to the right of $\gamma'$.} 
\label{fig:dgformal}
\end{figure}
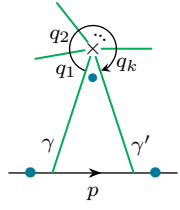

\begin{definition}
\label{definition:DGformal}
Let $\mathbf D_n^\times$ be a graded orbifold disk with $n$ stops and with one orbifold point. A {\it DG dissection} of $\mathbf D_n^\times$ is an admissible dissection $\Delta$ such that each smooth polygon contains exactly one stop. If the dissection contains an orbifold polygon, then we require this to be a $1$-gon. Else, we require the (smooth) polygon containing the orbifold stop to be an $m$-gon with $m \in \{ 2,3 \}$. 

A {\it formal dissection} is a DG dissection $\Delta$ satisfying the following additional condition:
\begin{enumerate}
\item[($\ast$)]
If the dissection cuts $\mathbf D_n^\times$ into smooth polygons and the orbifold stop is contained in a $2$-gon with one boundary path $p$ such that the orbifold path $q_k \dotsb q_2 q_1$ parallel to $p$ has length strictly greater than $1$ (i.e,\ $k > 1$), then $p$ is maximal, i.e.\ $p$ cannot be nontrivially composed with any other path. (See Fig.~\ref{fig:dgformal} for an illustration.) 
\end{enumerate}
\end{definition}

\begin{remark}\label{remark:dgdissectiondg}
Let $\Delta$ be an admissible DG dissection. Since there are neither smooth disk sequences of length $\geq 3$ nor orbifold disk sequences of length $\geq 4$, see Section \ref{subsection:higher}, it follows that there are no nonzero A$_\infty$ products $\mu_i$ with $i>2$ and thus $\mathbf A_\Delta$ is a DG category by Theorem \ref{theorem:moritaequivalenceadmissible}. 
\end{remark}

\begin{theorem}\label{theorem:formal}
Let $\Delta$ be an admissible dissection on a graded orbifold surface $\mathbf S$ with stops. Then $\Delta$ is a formal dissection if and only if the DG algebra $\mathbf A_\Delta$ is formal.
\end{theorem}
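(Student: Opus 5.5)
The plan is to work locally near the orbifold point, since all higher structure of $\mathbf A_\Delta$ lives there. By Remark \ref{remark:dgdissectiondg} we may assume $\Delta$ is at least a DG dissection whenever $\mathbf A_\Delta$ is a DG algebra. The non-DG case I would dispose of separately: a smooth disk sequence of length $\geq 3$ or an orbifold disk sequence of length $\geq 4$ restricts, via Proposition \ref{proposition:formalcriterion}, to a full subcategory that is non-formal. The model for this is Theorem \ref{theorem:newdissection}, where $(\widetilde{\mathrm A}_{n-1},\muotimes_{2n})$ is not formal. So the real content is to decide, for a DG dissection, when $\mathbf A_\Delta$ is formal. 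I would split into the three cases allowed by Definition \ref{definition:DGformal}.

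\textbf{Case 1: the orbifold polygon is a $1$-gon.} Then there is no differential. The only extra structure is $\epsilon^2=\id$, coming from the binary operation $\muotimes_2$. Hence $\mathbf A_\Delta$ is a graded (skew-gentle) algebra with zero differential and is trivially formal.

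\textbf{Case 2: the orbifold stop lies in a smooth $3$-gon.} The orbifold disk sequence of length $3$ contributes only $\mutimes_2$, which modifies the product, and there is no $\mutimes_1$. Thus $\mathbf A_\Delta$ is again a graded algebra with zero differential and is formal.

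\textbf{Case 3: the orbifold stop lies in a $2$-gon.} Here the differential is $\mutimes_1(\s p)=\pm \s q$ with $q=q_k\dotsb q_1$, and all other generators are cycles. I would choose an explicit strong deformation retract onto cohomology. The homotopy $h$ sends $q$ (and, where needed, its left and right multiples $aq$, $qb$) to $p$ (resp.\ $ap$, $pb$, interpreted through the gentle relations), and is zero elsewhere. The cohomology $\H$ is the path algebra with the pair $(p,q)$ cancelled and the relations induced from the gentle ones. By homotopy transfer, the higher products on $\H$ are sums over binary trees with internal edges labelled by $h$. Every nonzero term must involve a product $x\cdot h(y)$ or $h(y)\cdot x$, which is a composite with $p$.
\begin{itemize}
\item If $k=1$, then $q$ is a single arrow and $p,q$ form an acyclic pair that simply cancels. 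The transferred $\mu_{\geq 3}$ vanish because $h$ only produces multiples of $p$, and these compose to zero with everything that enters a tree.
\item If $k>1$ and $p$ is maximal (condition ($\ast$)), then every product with $p$ vanishes. Hence every tree with an internal edge vanishes, so $\mu_{\geq 3}=0$ on $\H$ and $\mathbf A_\Delta$ is formal.
\end{itemize}

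\textbf{Converse.} Suppose $k>1$ and $p$ is not maximal, say $rp\neq 0$ for a boundary path $r$ (the case $pr'\neq0$ is symmetric). I would use Proposition \ref{proposition:formalcriterion} and restrict to the full subcategory on the few arcs involved: the endpoints of $q_1$, of $q_2\dotsb q_k$, and the target of $r$. In its minimal model I would compute
\[
\mu_3(\s r\otimes \s q_k\dotsb q_2\otimes \s q_1)=\pm \s rp .
\]
This arises from $h(q_k\dotsb q_2\cdot q_1)=p$ followed by composition with $r$. Note that $rp$ survives in cohomology, since nothing has differential $rp$.

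The main obstacle is showing that this $\mu_3$ cannot be removed by an A$_\infty$ isomorphism. The first approach I would try is to read it as a Massey product $\langle r, q_k\dotsb q_2, q_1\rangle$ and check that its indeterminacy $r\cdot\H+\H\cdot q_1$ does not contain $rp$. This is plausible because in the small subcategory the only paths to the target of $r$ from the source of $q_1$ are multiples of $rp$. Failing that, I would argue that $\mu_3$ represents a nonzero class in $\HH^2$ of the graded algebra $\H$, via the Bardzell resolution, exactly as in the Hochschild computation for $\mathbf S_{1,n}$. Either way the subcategory is non-formal, hence so is $\mathbf A_\Delta$.
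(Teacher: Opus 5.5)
Your architecture for DG dissections is sound. It matches what the paper points to: the paper only cites \cite[Theorem 8.6]{barmeierschrollwang}, but Proposition \ref{proposition:formalcriterion} and example \eqref{eq:dissection4}, where $\H^\bullet(\mathbf A_\Delta)$ acquires the new arrow $rp$ together with a $\mu_3$, are exactly your converse. Three steps, however, do not hold up as written. Write $q_1\colon\gamma\to\delta_1$ and $r\colon\gamma'\to\gamma''$.

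First, your reason for the vanishing of the transferred $\mu_{\geq 3}$ when $k=1$ is false. Condition ($\ast$) imposes nothing on $p$ when $k=1$, and $p$ need not be maximal. In \eqref{eq:dissection5} the boundary path $p$ composes nontrivially with the boundary arrow into its source; this is precisely how the branching arrow of $\mathrm D_5$ arises in cohomology. So outputs of $h$ do not compose to zero with everything. The correct reason has two parts:
\begin{itemize}
\item $h$ is nonzero only on $q=q_1$. Indeed $aq=qb=0$ for nonidentity $a,b$, since $q$ is the maximal orbifold path and boundary and orbifold paths do not compose; so your extension of $h$ to $aq$, $qb$ is vacuous.
\item $q_1$ is never a product of two basis elements of $\H$, being an indecomposable angle that is itself killed.
\end{itemize}
Hence no tree with an internal edge contributes. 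A simpler equivalent route: for $k=1$, the span of all paths other than $p,q_1$ is a DG subalgebra with zero differential whose inclusion is a quasi-isomorphism. For $k>1$ with $p$ maximal, $\Bbbk p\oplus\Bbbk q$ is an acyclic two-sided DG ideal, and the quotient map is a quasi-isomorphism onto $\H$.

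Second, in the converse your criterion (``the only paths from the source of $q_1$ to the target of $r$ are multiples of $rp$'') does not exclude $rp$ from the indeterminacy; it is compatible with the indeterminacy being all of $\Bbbk\, rp$. What you need is:
\begin{itemize}
\item $r\cdot\H(\gamma,\gamma')$ cannot contain $rp$, because $ry'=rp$ forces $y'=p$ and $p\notin\H$ (in the disk, $\H(\gamma,\gamma')=0$);
\item $\H(\delta_1,\gamma'')\cdot q_1$ is spanned by orbifold paths, so it cannot involve the boundary path $rp$.
\end{itemize}
Also note that $r\cdot q_k\dotsb q_2=0$, so the other tree term vanishes, and normalize the linear part of a putative A$_\infty$ isomorphism to the identity. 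Invariance of $\mu_3$ modulo $r\H+\H q_1$ then gives non-formality.

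Third, drop or actually prove your opening paragraph on non-DG dissections. Remark \ref{remark:dgdissectiondg} gives only ``DG dissection $\Rightarrow$ DG category''. Nonzero higher products do not by themselves preclude formality. Theorem \ref{theorem:newdissection} concerns orbifold polygons and proves no non-formality at all. Since the statement speaks of \emph{the DG algebra} $\mathbf A_\Delta$, it is meant for DG dissections, and your proof should be confined to that setting.
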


We refer to \cite[Theorem 8.6]{barmeierschrollwang} for the proof.

Note that the notion of a formal dissection drastically simplifies for smooth graded surfaces: it is simply a dissection into (possibly punctured) polygons such that each polygon contains exactly one stop. (In the homologically smooth case, these are the ``full formal arc systems'' in the terminology of \cite{haidenkatzarkovkontsevich}.) The somewhat subtle conditions in the case of orbifold surfaces allow for many more types of algebras arising from formal generators of $\W (\mathbf S)$ for graded orbifold surfaces $\mathbf S$ (see Section \ref{subsection:examples} for examples). This contrasts the case of smooth surfaces, where the simplicity of the notion of formal dissection reflects the closure under derived equivalence of gentle algebras which is a result of Schröer and Zimmermann \cite{schroeerzimmermann} in the ungraded case and a folklore conjecture in the graded case (cf.\ Conjecture \ref{conjecture:skewgentle} below).

\subsection{Graded skew-gentle algebras}
\label{subsection:skewgentle}

Let $\Bbbk Q/J$ be a graded gentle algebra and $\mathrm{Sp}$ be a subset of loops $\epsilon_i$ of degree $0$ in $Q$ with $\epsilon_i^2 = 0$. Recall that the {\it graded skew-gentle algebra} \cite{geisspena} associated to $(\Bbbk Q/J,  \mathrm{Sp})$ is defined as $A = \Bbbk Q/ I$ where the ideal $I$ is obtained from $J$ by changing the relations $\epsilon_i^2 = 0$ at each loop $\epsilon_i \in \mathrm{Sp}$ into $\epsilon_i^2 = \epsilon_i$. As we are working over a field of characteristic $0$, we may replace the relation by $\epsilon_i^2 = 1$ which fits more naturally with the geometric perspective, giving \eqref{eq:skewgentlebuildingblock} as a special case of \eqref{eq:orbifoldbuildingblocks}. Note that the relation $\epsilon_i^2 = 1$ is not admissible (i.e.\ not at least quadratic), whence the vertices at which these loops are based represent the sum of two orthogonal idempotents. 

The following result holds for any orbifold surface with stops.

\begin{theorem}[{\cite[Theorem 8.9 and Corollary 8.10]{barmeierschrollwang}}]
\label{corollary:formalgenerator}
Let $\mathbf S = (S, \Sigma, \eta)$ be any graded orbifold surface with stops.
 \begin{enumerate}
 \item There is a formal DG dissection $\Delta$ of $\mathbf S$ such that $\H^\bullet(\mathbf A_\Delta)$ is a graded skew-gentle algebra. 
\item 
For any formal generator $\Gamma$ of $\W (\mathbf S)$, the cohomology of its graded endomorphism algebra in $\W (\mathbf S)$ is a graded associative algebra which is derived equivalent to a graded skew-gentle algebra.
\end{enumerate}
\end{theorem}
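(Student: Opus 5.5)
The statement to prove is Theorem \ref{corollary:formalgenerator}, which has two parts: (1) every graded orbifold surface admits a formal DG dissection whose cohomology is graded skew-gentle; (2) any formal generator has cohomology derived equivalent to a graded skew-gentle algebra. I would prove (1) by a construction and then deduce (2) from (1) together with Theorem \ref{theorem:moritaequivalenceadmissible} (2).

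For (1), first reduce to a local problem around each orbifold point. Choose disjoint small disks $U_x$ around the orbifold points $x \in \Sing(S)$. Then choose a full formal arc system on the smooth part $S \smallsetminus \bigcup_x U_x$, in the sense of \cite{haidenkatzarkovkontsevich}, modified so that each $U_x$ becomes a single orbifold $1$-gon cut out by one arc $\gamma_x$ enclosing $x$. Add further arcs until every smooth polygon contains exactly one stop; this is possible since $\Sigma$ contains at least one boundary stop. By Definition \ref{definition:newdissection}, the result is a DG dissection in which every orbifold polygon is a $1$-gon. In that case the only higher operation is $\muotimes_2$, a binary product, so the A$_\infty$ structure is just an associative algebra.

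Concretely, each $1$-gon contributes a degree $0$ loop $\epsilon_x$ with $\epsilon_x^2 = 1$, as in \eqref{eq:skewgentlebuildingblock}. The smooth polygons contribute the gentle building blocks \eqref{eq:buildingblocks}, each with one stop and hence trivial higher structure. The paths compose exactly as in a graded gentle algebra, with each $\epsilon_x$ based at the vertex $\gamma_x$. The gentle conditions at $\gamma_x$ need checking: the loop is the only path on its inner side, and it composes to zero with the boundary paths on its outer side. This is the step I expect to be the main obstacle.

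With that in place, the resulting algebra is the skew-gentle algebra attached to the gentle algebra obtained by replacing each $\epsilon_x^2 = 1$ with $\epsilon_x^2 = 0$, with $\mathrm{Sp} = \{\epsilon_x\}$. Formality is automatic because there are no higher products and no differentials: no length-$2$ orbifold disk sequences occur, since no arcs end at orbifold points. To identify $\tw(\mathbf A_\Delta)^\natural$ with $\W(\mathbf S)$, I would replace each $1$-gon locally by an admissible dissection. The local picture is exactly the $n=1$ case of Theorem \ref{theorem:newdissection}. Its equivalence to the admissible model, via the twisted complex $\gamma_0$ and Theorem \ref{theorem:splitgenerator}, glues with the smooth part by sectorial descent.

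For (2), let $\Gamma$ be a formal generator with $B = \H^\bullet \End(\Gamma)$. Formality gives $\per(B) \simeq \mathrm D\W(\mathbf S)$ (Remark \ref{remark:quasimoritaequivalence}). By (1), $\mathrm D\W(\mathbf S) \simeq \per(\H^\bullet(\mathbf A_\Delta))$, where $\H^\bullet(\mathbf A_\Delta)$ is skew-gentle. Composing the two equivalences gives the claimed derived equivalence.
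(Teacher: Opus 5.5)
\textbf{Verdict: there is a genuine gap.} The step that carries all the weight, $\tw(\mathbf A_\Delta)^\natural\simeq\W(\mathbf S)$ for a dissection by orbifold $1$-gons, is not justified, and as you set it up it is circular.

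\emph{Why it is unjustified.} $\W(\mathbf S)$ is defined through admissible dissections, in which arcs end at the orbifold points (Theorem \ref{theorem:moritaequivalenceadmissible}). Your $\Delta$ is not such a dissection, so neither that theorem nor Theorem \ref{theorem:formal} covers it. The only result in the paper about $1$-gon-type dissections is Theorem \ref{theorem:newdissection}. Its proof concludes by invoking Theorem \ref{corollary:formalgenerator}, the statement you are trying to prove, and the general claim after Definition \ref{definition:newdissection} rests on it.

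\emph{Why the local case does not help.} Doing the $n=1$ case by hand gives an abstract equivalence of two copies of $\per(\Bbbk\times\Bbbk)$. That says nothing about whether the gluing data along $\gamma_x$ match. No descent theorem with $(\widetilde{\mathrm A}_0,\muotimes_2)$-pieces attached along arcs is available: sectorial descent is for smooth surfaces, and the cosheaf of \cite{barmeierschrollwang} lives on ribbon complexes dual to admissible dissections.

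\emph{How to close it while keeping your $\Delta$.} Lift $\Delta$ to a smooth double cover $\widetilde{\mathbf S}$.
\begin{itemize}
\item Smooth polygons lift to pairs, each with one stop.
\item Each orbifold $1$-gon lifts to a single stopless $2$-gon bounded by $\gamma_x^\pm$.
\end{itemize}
So $\widetilde\Delta$ is a $\mathbb Z_2$-invariant admissible dissection of a smooth surface. The full subcategory of $\mathbf A_{\widetilde\Delta}/\mathbb Z_2$ on one lift of each arc is $\mathbf A_\Delta$, with $\mucirc_2$ becoming $\muotimes_2$. The two lifts of each arc are isomorphic in the orbit category. The proposition of Section \ref{subsection:orbitcategories} together with \eqref{eq:viewpoints} then gives $\tw(\mathbf A_\Delta)^\natural\simeq(\W(\widetilde{\mathbf S})/\mathbb Z_2)^\natural\simeq\W(\mathbf S)$. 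This is exactly how $\mathbf A_{\Gamma'}$ is obtained before Theorem \ref{theorem:newdissection}.

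\emph{Alternative fix inside admissible dissections.} Replace each $\gamma_x$ by two arcs from the boundary segment $I_x$ to $x$, and put the orbifold stop in the $2$-gon between them. This is a formal DG dissection, because $(\ast)$ is vacuous when only two arcs meet $x$. Its cohomology is the split form of the same skew-gentle algebra (compare \eqref{eq:dissection5}).

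\textbf{The local relations you expect at $\gamma_x$ are wrong.} As you state them, they would violate gentleness. Let $\epsilon$ run along $I_x$ from the endpoint $a$ of $\gamma_x$ to the endpoint $b$. Let $r$ be the outer boundary path ending at $a$, and $s$ the one starting at $b$.
\begin{itemize}
\item $\epsilon r$ and $s\epsilon$ are concatenations of boundary paths, hence nonzero.
\item The zero relations at $\gamma_x$ are $sr=0$ and, before deforming, $\epsilon^2=0$.
\item If $\epsilon$ composed to zero with both outer paths, it would have two precomposable arrows, $\epsilon$ and $r$, with vanishing composition. That is forbidden in a gentle algebra.
\end{itemize}
The correct pattern is precisely that of a special loop in a skew-gentle algebra.

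You should also state that $\muotimes_2$ is extended $\mubar_2$-linearly in its outer entries, as for $\mucirc_n$ (cf.\ \cite[Remark 7.4]{barmeierschrollwang}). Otherwise associativity fails: $\epsilon^2 r=r$, while $\epsilon\cdot(\epsilon r)$ would vanish.

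Once (1) is established, your deduction of (2) is fine.
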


The following conjecture can be viewed as a converse of Theorem \ref{corollary:formalgenerator} (2).

\begin{conjecture}[{\cite[Conjecture 8.11]{barmeierschrollwang}}]
\label{conjecture:skewgentle}
Let $A$ be a graded gentle or skew-gentle algebra and let $\mathbf S = (S, \Sigma, \eta)$ be the graded surface associated to $A$.

For any graded associative algebra $B$ which is (perfect) derived equivalent to $A$ there exists a formal dissection $\Delta$ of $\mathbf S$ such that $B \simeq \H^\bullet (\mathbf A_\Delta)$ as graded associative algebras.
\end{conjecture}

\begin{remark}
Conjecture \ref{conjecture:skewgentle} implies that the formal dissections of $\mathbf S$ would provide the complete class of graded associative algebras which are derived equivalent to $A$, giving a geometric solution to an algebraic classification problem. The conjecture has been confirmed for $\mathbf S = \mathbf D_n^\times$ \cite{amiotplamondon2}.
\end{remark}

\subsection{Examples}
\label{subsection:examples}

We conclude this survey by giving a range of concrete examples of dissections of the orbifold disk $\mathbf D_4^\times$ with one orbifold point and four boundary stops. These examples illustrate the utility and versatility of dissections of orbifold surfaces in the classification of derived skew-gentle algebras.

We illustrate the dissections $\Delta$, their associated DG categories $\mathbf A_{\Delta}$ and their minimal models $\H^\bullet (\mathbf A_{\Delta})$. We have shaded the polygon containing the orbifold stop, which gives rise to $\mutimes_1$ or $\mutimes_2$. In these examples, the grading structure can be chosen such that all straight arrows are of degree $0$ and the curved arrows are of degree $-1$.
\begin{equation}
\label{eq:dissection1}
\begin{tikzpicture}[x=1em,y=1em,decoration={markings,mark=at position 0.7 with {\arrow[black]{Stealth[length=4.8pt]}}}, baseline=-.2em]
\begin{scope}
\node[font=\small] at (0, 3.75em) {$\Delta$};
\draw[use as bounding box, draw opacity=0] (-2.5em,-2.5em) rectangle (24.5em,2.5em); 
\draw[fill=arccolour!30!white,line width=0pt] (245:2.5em) arc[start angle=245, end angle=225, radius=2.5em] to (0,0) to (-45:2.5em) arc[start angle=-45, end angle=-75, radius=2.5em] to[bend right=55, looseness=1.3] (255:2.5em) to cycle;
\draw[->, line width=.5pt, line cap=round] (222:.7em) to[out=222-90, in=137+97] (137:.7em);
\draw[->, line width=.5pt, line cap=round] (132:.7em) to[out=132-90, in=47+97] (47:.7em);
\draw[->, line width=.5pt, line cap=round] (42:.7em) to[out=42-90, in=-43+97] (-43:.7em);
\draw[line width=.5pt] circle(2.5em);
\draw[line width=0pt,postaction={decorate}] (225:2.5em) arc[start angle=225, end angle=255, radius=2.5em];
\draw[line width=0pt,postaction={decorate}] (-75:2.5em) arc[start angle=-75, end angle=-45, radius=2.5em];
\node[font=\scriptsize,shape=circle,scale=.6,fill=white] (X) at (0,0) {};
\node[font=\scriptsize] at (0,0) {$\times$};
\draw[line width=.75pt,color=arccolour, line cap=round] (225:2.5em) to (X);
\draw[line width=.75pt,color=arccolour, line cap=round] (135:2.5em) to (X);
\draw[line width=.75pt,color=arccolour, line cap=round] (45:2.5em) to (X);
\draw[line width=.75pt,color=arccolour, line cap=round] (-45:2.5em) to (X);
\draw[line width=.75pt,color=arccolour, line cap=round] (-75:2.5em) to[bend right=55, looseness=1.3] (255:2.5em);
\node[font=\scriptsize, color=stopcolour] at (270:0.6em) {$\bullet$};
\foreach \a in {90, 180, 270, 0} {
\draw[fill=stopcolour, color=stopcolour] (\a:2.5em) circle(.15em);
}
\end{scope}
\begin{scope}[xshift=5.5em, yshift=1em]
\node[font=\small] at (3, 2.75em) {$\mathbf A_\Delta$};
\draw[line width=.5pt, fill=black] (0,0) circle(.1em);
\draw[line width=.5pt, fill=black] (2,0) circle(.1em);
\draw[line width=.5pt, fill=black] (4,0) circle(.1em);
\draw[line width=.5pt, fill=black] (6,0) circle(.1em);
\draw[line width=.5pt, fill=black] (3,-2) circle(.1em);
\node[circle, minimum size=.1em, outer sep=2pt, inner sep=0] (0) at (0,0) {};
\node[circle, minimum size=.1em, outer sep=2pt, inner sep=0] (1) at (2,0) {};
\node[circle, minimum size=.1em, outer sep=2pt, inner sep=0] (2) at (4,0) {};
\node[circle, minimum size=.1em, outer sep=2pt, inner sep=0] (3) at (6,0) {};
\node[circle, minimum size=.1em, outer sep=2pt, inner sep=0] (B) at (3,-2) {};
\path[-stealth] (0) edge (1) (1) edge (2) (2) edge (3) (0) edge (B) (B) edge (3); 
\node[font=\scriptsize] at (3,-.8) {$\mutimes_2$};
\end{scope}
\begin{scope}[xshift=15em]
\node[font=\small] at (3, 3.75em) {$\H^\bullet (\mathbf A_\Delta)$};
\node[font=\footnotesize] at (3, 0) {same as $\mathbf A_\Delta$};
\end{scope}
\end{tikzpicture}
\end{equation}
\begin{equation}
\label{eq:dissection2}
\begin{tikzpicture}[x=1em,y=1em,decoration={markings,mark=at position 0.7 with {\arrow[black]{Stealth[length=4.8pt]}}}, baseline=-.2em]
\begin{scope}
\draw[use as bounding box, draw opacity=0] (-2.5em,-2.5em) rectangle (24.5em,2.5em);
\draw[fill=arccolour!30!white,line width=0pt] (210:2.5em) arc[start angle=210, end angle=240, radius=2.5em] to (0,0) to (210:2.5em) to cycle;
\draw[->, line width=.5pt, line cap=round] (207:.7em) to[out=207-90, in=137+97] (137:.7em);
\draw[->, line width=.5pt, line cap=round] (132:.7em) to[out=132-90, in=47+97] (47:.7em);
\draw[->, line width=.5pt, line cap=round] (42:.7em) to[out=42-90, in=-43+97] (-43:.7em);
\draw[->, line width=.5pt, line cap=round] (-48:.7em) to[out=-48-90, in=-118+97] (-118:.7em);
\draw[line width=.5pt] circle(2.5em);
\draw[line width=0pt,postaction={decorate}] (210:2.5em) arc[start angle=210, end angle=240, radius=2.5em];
\node[font=\scriptsize,shape=circle,scale=.6,fill=white] (X) at (0,0) {};
\node[font=\scriptsize] at (0,0) {$\times$};
\draw[line width=.75pt,color=arccolour, line cap=round] (210:2.5em) to (X);
\draw[line width=.75pt,color=arccolour, line cap=round] (240:2.5em) to (X);
\draw[line width=.75pt,color=arccolour, line cap=round] (135:2.5em) to (X);
\draw[line width=.75pt,color=arccolour, line cap=round] (45:2.5em) to (X);
\draw[line width=.75pt,color=arccolour, line cap=round] (-45:2.5em) to (X);
\node[font=\scriptsize, color=stopcolour] at (226:0.8em) {$\bullet$};
\foreach \a in {90, 180, 270, 0} {
\draw[fill=stopcolour, color=stopcolour] (\a:2.5em) circle(.15em);
}
\end{scope}
\begin{scope}[xshift=5.5em, yshift=.5em]
\draw[line width=.5pt, fill=black] (-.5,0) circle(.1em);
\draw[line width=.5pt, fill=black] (1.25,0) circle(.1em);
\draw[line width=.5pt, fill=black] (3,0) circle(.1em);
\draw[line width=.5pt, fill=black] (4.75,0) circle(.1em);
\draw[line width=.5pt, fill=black] (6.5,0) circle(.1em);
\node[circle, minimum size=.1em, outer sep=2pt, inner sep=0] (0) at (-.5,0) {};
\node[circle, minimum size=.1em, outer sep=2pt, inner sep=0] (1) at (1.25,0) {};
\node[circle, minimum size=.1em, outer sep=2pt, inner sep=0] (2) at (3,0) {};
\node[circle, minimum size=.1em, outer sep=2pt, inner sep=0] (3) at (4.75,0) {};
\node[circle, minimum size=.1em, outer sep=2pt, inner sep=0] (4) at (6.5,0) {};
\path[-stealth] (0) edge (1) (1) edge (2) (2) edge (3) (3) edge (4) (0) edge[bend right=30] (4); 
\node[font=\scriptsize] at (3,-.6) {$\mutimes_1$};
\end{scope}
\begin{scope}[xshift=15em, yshift=.5em]
\draw[line width=.5pt, fill=black] (-.5,0) circle(.1em);
\draw[line width=.5pt, fill=black] (1.25,0) circle(.1em);
\draw[line width=.5pt, fill=black] (3,0) circle(.1em);
\draw[line width=.5pt, fill=black] (4.75,0) circle(.1em);
\draw[line width=.5pt, fill=black] (6.5,0) circle(.1em);
\node[circle, minimum size=.1em, outer sep=2pt, inner sep=0] (0) at (-.5,0) {};
\node[circle, minimum size=.1em, outer sep=2pt, inner sep=0] (1) at (1.25,0) {};
\node[circle, minimum size=.1em, outer sep=2pt, inner sep=0] (2) at (3,0) {};
\node[circle, minimum size=.1em, outer sep=2pt, inner sep=0] (3) at (4.75,0) {};
\node[circle, minimum size=.1em, outer sep=2pt, inner sep=0] (4) at (6.5,0) {};
\path[-stealth] (0) edge (1) (1) edge (2) (2) edge (3) (3) edge (4); 
\draw[line cap=round, dash pattern=on 0pt off 1.2pt, line width=.6pt] (1.25,0) +(-.75em,-.12em) to[bend right=35, looseness=.97] +(4.15em,-.1em);
\end{scope}
\end{tikzpicture}
\end{equation}

\begin{equation}
\label{eq:dissection3}
\begin{tikzpicture}[x=1em,y=1em,decoration={markings,mark=at position 0.7 with {\arrow[black]{Stealth[length=4.8pt]}}}, baseline=-.2em]
\begin{scope}
\draw[use as bounding box, draw opacity=0] (-2.5em,-2.5em) rectangle (24.5em,2.5em); 
\draw[fill=arccolour!30!white,line width=0pt] (210:2.5em) arc[start angle=210, end angle=240, radius=2.5em] to (0,0) to (210:2.5em) to cycle;
\draw[->, line width=.5pt, line cap=round] (207:.7em) to[out=207-90, in=137+97] (137:.7em);
\draw[->, line width=.5pt, line cap=round] (132:.7em) to[out=132-90, in=47+97] (47:.7em);
\draw[->, line width=.5pt, line cap=round] (42:.7em) to[out=42-90, in=-118+97, looseness=1.45] (-118:.7em);
\draw[line width=.5pt] circle(2.5em);
\draw[line width=0pt,postaction={decorate}] (210:2.5em) arc[start angle=210, end angle=240, radius=2.5em];
\draw[line width=0pt,postaction={decorate}] (15:2.5em) arc[start angle=15, end angle=42, radius=2.5em];
\node[font=\scriptsize,shape=circle,scale=.6,fill=white] (X) at (0,0) {};
\node[font=\scriptsize] at (0,0) {$\times$};
\draw[line width=.75pt,color=arccolour, line cap=round] (210:2.5em) to (X);
\draw[line width=.75pt,color=arccolour, line cap=round] (240:2.5em) to (X);
\draw[line width=.75pt,color=arccolour, line cap=round] (135:2.5em) to (X);
\draw[line width=.75pt,color=arccolour, line cap=round] (45:2.5em) to (X);
\draw[line width=.75pt,color=arccolour, line cap=round] (15:2.5em) to[bend right=55, looseness=1.3] (-15:2.5em);
\node[font=\scriptsize, color=stopcolour] at (226:0.8em) {$\bullet$};
\foreach \a in {90, 180, 270, 0} {
\draw[fill=stopcolour, color=stopcolour] (\a:2.5em) circle(.15em);
}
\end{scope}
\begin{scope}[xshift=5.5em, yshift=.25em]
\draw[line width=.5pt, fill=black] (0,0) circle(.1em);
\draw[line width=.5pt, fill=black] (2,0) circle(.1em);
\draw[line width=.5pt, fill=black] (4,0) circle(.1em);
\draw[line width=.5pt, fill=black] (6,0) circle(.1em);
\draw[line width=.5pt, fill=black] (4,1.75) circle(.1em);
\node[circle, minimum size=.1em, outer sep=2pt, inner sep=0] (0) at (0,0) {};
\node[circle, minimum size=.1em, outer sep=2pt, inner sep=0] (1) at (2,0) {};
\node[circle, minimum size=.1em, outer sep=2pt, inner sep=0] (2) at (4,0) {};
\node[circle, minimum size=.1em, outer sep=2pt, inner sep=0] (3) at (6,0) {};
\node[circle, minimum size=.1em, outer sep=2pt, inner sep=0] (T) at (4,1.75) {};
\path[-stealth] (0) edge (1) (1) edge (2) (2) edge (3) (0) edge[bend right=33] (3) (T) edge (2); 
\node[font=\scriptsize] at (3,-.5) {$\mutimes_1$};
\end{scope}
\begin{scope}[xshift=15em, yshift=.25em]
\draw[line width=.5pt, fill=black] (0,0) circle(.1em);
\draw[line width=.5pt, fill=black] (2,0) circle(.1em);
\draw[line width=.5pt, fill=black] (4,0) circle(.1em);
\draw[line width=.5pt, fill=black] (6,0) circle(.1em);
\draw[line width=.5pt, fill=black] (4,1.75) circle(.1em);
\node[circle, minimum size=.1em, outer sep=2pt, inner sep=0] (0) at (0,0) {};
\node[circle, minimum size=.1em, outer sep=2pt, inner sep=0] (1) at (2,0) {};
\node[circle, minimum size=.1em, outer sep=2pt, inner sep=0] (2) at (4,0) {};
\node[circle, minimum size=.1em, outer sep=2pt, inner sep=0] (3) at (6,0) {};
\node[circle, minimum size=.1em, outer sep=2pt, inner sep=0] (T) at (4,1.75) {};
\path[-stealth] (0) edge (1) (1) edge (2) (2) edge (3) (T) edge (2);
\draw[line cap=round, dash pattern=on 0pt off 1.2pt, line width=.6pt] (2,0) +(-.75em,-.12em) to[bend right=35, looseness=.97] +(2.75em,-.1em) (4,0) +(.12em,.75em) to[bend left=25] +(.75em,.12em);
\end{scope}
\end{tikzpicture}
\end{equation}

\begin{equation}
\label{eq:dissection4}
\begin{tikzpicture}[x=1em,y=1em,decoration={markings,mark=at position 0.7 with {\arrow[black]{Stealth[length=4.8pt]}}}, baseline=-.2em]
\begin{scope}
\draw[use as bounding box, draw opacity=0] (-2.5em,-2.5em) rectangle (24.5em,2.5em);
\draw[fill=arccolour!30!white,line width=0pt] (210:2.5em) arc[start angle=210, end angle=240, radius=2.5em] to (0,0) to (210:2.5em) to cycle;
\draw[->, line width=.5pt, line cap=round] (207:.7em) to[out=207-90, in=137+97] (137:.7em);
\draw[->, line width=.5pt, line cap=round] (132:.7em) to[out=132-90, in=47+97] (47:.7em);
\draw[->, line width=.5pt, line cap=round] (42:.7em) to[out=42-90, in=-118+97, looseness=1.45] (-118:.7em);
\draw[line width=.5pt] circle(2.5em);
\draw[line width=0pt,postaction={decorate}] (210:2.5em) arc[start angle=210, end angle=240, radius=2.5em];
\draw[line width=0pt,postaction={decorate}] (240:2.5em) arc[start angle=240, end angle=259, radius=2.5em];
\node[font=\scriptsize,shape=circle,scale=.6,fill=white] (X) at (0,0) {};
\node[font=\scriptsize] at (0,0) {$\times$};
\draw[line width=.75pt,color=arccolour, line cap=round] (210:2.5em) to (X);
\draw[line width=.75pt,color=arccolour, line cap=round] (240:2.5em) to (X);
\draw[line width=.75pt,color=arccolour, line cap=round] (135:2.5em) to (X);
\draw[line width=.75pt,color=arccolour, line cap=round] (45:2.5em) to (X);
\draw[line width=.75pt,color=arccolour, line cap=round] (285:2.5em) to[bend right=55, looseness=1.3] (255:2.5em);
\node[font=\scriptsize, color=stopcolour] at (226:0.8em) {$\bullet$};
\foreach \a in {90, 180, 270, 0} {
\draw[fill=stopcolour, color=stopcolour] (\a:2.5em) circle(.15em);
}
\end{scope}
\begin{scope}[xshift=5.5em, yshift=.5em]
\draw[line width=.5pt, fill=black] (-.5,0) circle(.1em);
\draw[line width=.5pt, fill=black] (1.25,0) circle(.1em);
\draw[line width=.5pt, fill=black] (3,0) circle(.1em);
\draw[line width=.5pt, fill=black] (4.75,0) circle(.1em);
\draw[line width=.5pt, fill=black] (6.5,0) circle(.1em);
\node[circle, minimum size=.1em, outer sep=2pt, inner sep=0] (0) at (-.5,0) {};
\node[circle, minimum size=.1em, outer sep=2pt, inner sep=0] (1) at (1.25,0) {};
\node[circle, minimum size=.1em, outer sep=2pt, inner sep=0] (2) at (3,0) {};
\node[circle, minimum size=.1em, outer sep=2pt, inner sep=0] (3) at (4.75,0) {};
\node[circle, minimum size=.1em, outer sep=2pt, inner sep=0] (4) at (6.5,0) {};
\path[-stealth] (0) edge (1) (1) edge (2) (2) edge (3) (3) edge (4) (0) edge[bend right=35] (3); 
\draw[line cap=round, dash pattern=on 0pt off 1.2pt, line width=.6pt] (4.75,0) +(-.75em,.12em) to[bend left=55, looseness=1.14] +(.75em,.1em);
\node[font=\scriptsize] at (2.125,-.5) {$\mutimes_1$};
\end{scope}
\begin{scope}[xshift=15em, yshift=.5em]
\draw[line width=.5pt, fill=black] (-.5,0) circle(.1em);
\draw[line width=.5pt, fill=black] (1.25,0) circle(.1em);
\draw[line width=.5pt, fill=black] (3,0) circle(.1em);
\draw[line width=.5pt, fill=black] (4.75,0) circle(.1em);
\draw[line width=.5pt, fill=black] (6.5,0) circle(.1em);
\node[circle, minimum size=.1em, outer sep=2pt, inner sep=0] (0) at (-.5,0) {};
\node[circle, minimum size=.1em, outer sep=2pt, inner sep=0] (1) at (1.25,0) {};
\node[circle, minimum size=.1em, outer sep=2pt, inner sep=0] (2) at (3,0) {};
\node[circle, minimum size=.1em, outer sep=2pt, inner sep=0] (3) at (4.75,0) {};
\node[circle, minimum size=.1em, outer sep=2pt, inner sep=0] (4) at (6.5,0) {};
\path[-stealth] (0) edge (1) (1) edge (2) (2) edge (3) (3) edge (4) (0) edge[bend right=35, looseness=1] (4); 
\draw[line cap=round, dash pattern=on 0pt off 1.2pt, line width=.6pt] (1.25,0) +(-.75em,-.12em) to[bend right=35, looseness=.97] +(2.5em,-.1em) (4.75,0) +(-.75em,.12em) to[bend left=55, looseness=1.14] +(.75em,.1em);
\node[font=\scriptsize] at (4,-.7) {$\mu_3$};
\end{scope}
\end{tikzpicture}
\end{equation}

The dissections \eqref{eq:dissection1}--\eqref{eq:dissection3} are formal and $\H^\bullet (\mathbf A_{\Delta})$ is an associative algebra concentrated in degree $0$ which is derived equivalent to a type $\mathrm D_5$ quiver. (See also \cite[Example 6.3]{amiotplamondon2} for further examples of associative algebras in this derived equivalence class.)

The dissection \eqref{eq:dissection4} is not formal, as it violates the condition ($\ast$) in Definition \ref{definition:DGformal} (cf.\ Fig.~\ref{fig:dgformal}). Indeed, $\H^\bullet (\mathbf A_\Delta)$ has a curved arrow of degree $-1$ and a ``new'' higher structure given by $\mu_3$.

Of course the path algebra of a type $\mathrm D_5$ quiver itself can also be obtained from dissections of $\mathbf D_4^\times$:
\begin{equation}
\label{eq:dissection5}
\begin{tikzpicture}[x=1em,y=1em,decoration={markings,mark=at position 0.7 with {\arrow[black]{Stealth[length=4.8pt]}}}, baseline=-.2em]
\begin{scope}
\draw[use as bounding box, draw opacity=0] (-2.5em,-2.5em) rectangle (24.5em,2.5em);
\draw[fill=arccolour!30!white,line width=0pt] (230:2.5em) arc[start angle=230, end angle=260, radius=2.5em] to (0,0) to (230:2.5em) to cycle;
\draw[->, line width=.5pt, line cap=round] (222:.45em) arc[start angle=222, end angle=-80, radius=.45em] to +(198:.1em);
\draw[line width=.5pt] circle(2.5em);
\draw[line width=0pt,postaction={decorate}] (190:2.5em) arc[start angle=190, end angle=207, radius=2.5em];
\draw[line width=0pt,postaction={decorate}] (203:2.5em) arc[start angle=203, end angle=220, radius=2.5em];
\draw[line width=0pt,postaction={decorate}] (216:2.5em) arc[start angle=216, end angle=234, radius=2.5em];
\draw[line width=0pt,postaction={decorate}] (230:2.5em) arc[start angle=230, end angle=260, radius=2.5em];
\node[font=\scriptsize,shape=circle,scale=.6,fill=white] (X) at (0,0) {};
\node[font=\scriptsize] at (0,0) {$\times$};
\draw[line width=.75pt,color=arccolour, line cap=round] (191:2.5em) to[bend right=20] (140:2.5em);
\draw[line width=.75pt,color=arccolour, line cap=round] (203.33:2.5em) to[out=225-180, in=180] (90:1.6em) to[out=0, in=45+180] (45:2.5em);
\draw[line width=.75pt,color=arccolour, line cap=round] (216.66:2.5em) to[out=220-180, in=180] (90:.7em) to[out=0, in=-45+180] (-45:2.5em);
\draw[line width=.75pt,color=arccolour, line cap=round] (230:2.5em) to (X);
\draw[line width=.75pt,color=arccolour, line cap=round] (260:2.5em) to (X);
\node[font=\scriptsize, color=stopcolour] at (246:0.8em) {$\bullet$};
\foreach \a in {90, 180, 270, 0} {
\draw[fill=stopcolour, color=stopcolour] (\a:2.5em) circle(.15em);
}
\end{scope}
\begin{scope}[xshift=5.5em, yshift=0]
\draw[line width=.5pt, fill=black] (0,0) circle(.1em);
\draw[line width=.5pt, fill=black] (2,0) circle(.1em);
\draw[line width=.5pt, fill=black] (4,0) circle(.1em);
\draw[line width=.5pt, fill=black] (5.8,1) circle(.1em);
\draw[line width=.5pt, fill=black] (5.8,-1) circle(.1em);
\node[circle, minimum size=.1em, outer sep=2pt, inner sep=0] (0) at (0,0) {};
\node[circle, minimum size=.1em, outer sep=2pt, inner sep=0] (1) at (2,0) {};
\node[circle, minimum size=.1em, outer sep=2pt, inner sep=0] (2) at (4,0) {};
\node[circle, minimum size=.1em, outer sep=2pt, inner sep=0] (3) at (5.8,1) {};
\node[circle, minimum size=.1em, outer sep=2pt, inner sep=0] (4) at (5.8,-1) {};
\path[-stealth] (0) edge (1) (1) edge (2) (2) edge (3) (3) edge (4) (3) edge[bend left=65, looseness=1.5] (4); 
\draw[line cap=round, dash pattern=on 0pt off 1.2pt, line width=.6pt] (5.8,1) +(200:.75em) to[bend left=100, looseness=2] +(-25:.75em);
\node[font=\scriptsize] at (6.3,.1) {$\mutimes_1$};
\end{scope}
\begin{scope}[xshift=15em]
\draw[line width=.5pt, fill=black] (0,0) circle(.1em);
\draw[line width=.5pt, fill=black] (2,0) circle(.1em);
\draw[line width=.5pt, fill=black] (4,0) circle(.1em);
\draw[line width=.5pt, fill=black] (5.8,1) circle(.1em);
\draw[line width=.5pt, fill=black] (5.8,-1) circle(.1em);
\node[circle, minimum size=.1em, outer sep=2pt, inner sep=0] (0) at (0,0) {};
\node[circle, minimum size=.1em, outer sep=2pt, inner sep=0] (1) at (2,0) {};
\node[circle, minimum size=.1em, outer sep=2pt, inner sep=0] (2) at (4,0) {};
\node[circle, minimum size=.1em, outer sep=2pt, inner sep=0] (3) at (5.8,1) {};
\node[circle, minimum size=.1em, outer sep=2pt, inner sep=0] (4) at (5.8,-1) {};
\path[-stealth] (0) edge (1) (1) edge (2) (2) edge (3) (2) edge (4); 
\end{scope}
\end{tikzpicture}
\end{equation}

\begin{equation}
\label{eq:dissection6}
\begin{tikzpicture}[x=1em,y=1em,decoration={markings,mark=at position 0.7 with {\arrow[black]{Stealth[length=4.8pt]}}}, baseline=-.2em]
\begin{scope}
\draw[use as bounding box, draw opacity=0] (-2.5em,-2.5em) rectangle (24.5em,2.5em);
\draw[fill=arccolour!30!white,line width=0pt] (235:2.5em) to ++(245-180:2.3em) to[bend left=90, looseness=2.2] ($(255:2.5em)+(245-180:2.3em)$) to (255:2.5em) arc[start angle=255, end angle=235, radius=2.5em] to cycle;
\draw[line width=.5pt] circle(2.5em);
\draw[line width=0pt,postaction={decorate}] (190:2.5em) arc[start angle=190, end angle=207, radius=2.5em];
\draw[line width=0pt,postaction={decorate}] (203:2.5em) arc[start angle=203, end angle=224, radius=2.5em];
\draw[line width=0pt,postaction={decorate}] (216:2.5em) arc[start angle=216, end angle=239, radius=2.5em];
\draw[line width=0pt,postaction={decorate}] (230:2.5em) arc[start angle=230, end angle=260, radius=2.5em];
\node[font=\scriptsize,shape=circle,scale=.6] (X) at (0,0) {};
\node[font=\scriptsize] at (0,0) {$\times$};
\draw[line width=.75pt,color=arccolour, line cap=round] (191:2.5em) to[bend right=20] (140:2.5em);
\draw[line width=.75pt,color=arccolour, line cap=round] (205:2.5em) to[out=235-180, in=180] (90:1.6em) to[out=0, in=45+180] (45:2.5em);
\draw[line width=.75pt,color=arccolour, line cap=round] (221:2.5em) to ++(245-180:1.5em) to[out=245-180, in=180] (90:.9em) to[out=0, in=-45+180] (-45:2.5em);
\draw[line width=.75pt,color=arccolour, line cap=round] (235:2.5em) to ++(245-180:2.3em) to[bend left=90, looseness=2.2] ($(255:2.5em)+(245-180:2.3em)$) to (255:2.5em);
\foreach \a in {90, 180, 270, 0} {
\draw[fill=stopcolour, color=stopcolour] (\a:2.5em) circle(.15em);
}
\end{scope}
\begin{scope}[xshift=5.5em, yshift=0]
\draw[line width=.5pt, fill=black] (0,0) circle(.1em);
\draw[line width=.5pt, fill=black] (2,0) circle(.1em);
\draw[line width=.5pt, fill=black] (4,0) circle(.1em);
\draw[line width=.5pt, fill=black] (6,0) circle(.1em);
\node[circle, minimum size=.1em, outer sep=2pt, inner sep=0] (0) at (0,0) {};
\node[circle, minimum size=.1em, outer sep=2pt, inner sep=0] (1) at (2,0) {};
\node[circle, minimum size=.1em, outer sep=2pt, inner sep=0] (2) at (4,0) {};
\node[circle, minimum size=.1em, outer sep=2pt, inner sep=0] (3) at (6,0) {};
\path[-stealth] (0) edge (1) (1) edge (2) (2) edge (3) (3) edge[out=55, in=125, looseness=30] (3); 
\node[font=\scriptsize] at (6,1.4) {$\muotimes_2$};
\end{scope}
\begin{scope}[xshift=15em]
\node[font=\footnotesize] at (-1.5,0) {$\simeq$};
\draw[line width=.5pt, fill=black] (0,0) circle(.1em);
\draw[line width=.5pt, fill=black] (2,0) circle(.1em);
\draw[line width=.5pt, fill=black] (4,0) circle(.1em);
\draw[line width=.5pt, fill=black] (5.8,1) circle(.1em);
\draw[line width=.5pt, fill=black] (5.8,-1) circle(.1em);
\node[circle, minimum size=.1em, outer sep=2pt, inner sep=0] (0) at (0,0) {};
\node[circle, minimum size=.1em, outer sep=2pt, inner sep=0] (1) at (2,0) {};
\node[circle, minimum size=.1em, outer sep=2pt, inner sep=0] (2) at (4,0) {};
\node[circle, minimum size=.1em, outer sep=2pt, inner sep=0] (3) at (5.8,1) {};
\node[circle, minimum size=.1em, outer sep=2pt, inner sep=0] (4) at (5.8,-1) {};
\path[-stealth] (0) edge (1) (1) edge (2) (2) edge (3) (2) edge (4); 
\end{scope}
\end{tikzpicture}
\end{equation}
Here \eqref{eq:dissection5} is a dissection in the sense of Definition \ref{definition:dissection}, where the curved arrow has degree $+1$, and \eqref{eq:dissection6} a dissection in the sense of the (revised) Definition \ref{definition:newdissection}, featuring an orbifold $1$-gon, where the loop has degree $0$.

\begin{remark}[Deformation-theoretic interpretation]
Note that in all of these examples, $\mathbf A_\Delta$ can be viewed as a {\it DG deformation} of the graded gentle algebra obtained by setting $\mutimes_2$, $\mutimes_1$ or $\muotimes_2$ to zero.

For \eqref{eq:dissection1}--\eqref{eq:dissection5} these graded gentle algebras are homologically smooth and have the cylinder $\mathbf S_{1,4}$ (see Fig.~\ref{fig:cylinder}) as surface model, and introducing $\mutimes_2$ or $\mutimes_1$, as the case may be, captures the deformation from $\W (\mathbf S_{1,4})$ to $\W (\mathbf D_4^\times)$ obtained by partially compactifying the boundary component of winding number $1$.

The underlying graded algebra of \eqref{eq:dissection6} obtained by setting $\muotimes_2$ to $0$ also has a cylinder as surface model, but with an entire boundary stop on the inner boundary component. (Note that the corresponding gentle algebra is not homologically smooth.)

However, this difference disappears through deformation. Indeed, through deformations of Fukaya categories of surfaces, orbifold points may arise either from compactifying boundary components with one boundary stop or with an entire boundary stop. (In both cases, the winding number is $1$, thus giving rise to order $2$ orbifold points.) See \cite{barmeierschrollwang2} for a complete description of deformations of $\W (\mathbf S)$.
\end{remark}

\paragraph{\bf Acknowledgements}
It is our great pleasure to thank Sibylle Schroll for the inspiring and fruitful collaboration and the ICRA Organizing and Scientific Committees for inviting the second-named author to present this work. We would also like to thank the editors of these proceedings, and the referee for their thoughtful and constructive comments which significantly improved the presentation.

This work was partially supported by the Deutsche For\-schungs\-ge\-mein\-schaft (DFG) through the project SFB/TRR 191 Symplectic Structures in Geometry, Algebra and Dynamics (Projektnummer 281071066-TRR191) and through the grant WA 5157/1-1, by the National Key R\&D Program of China (2024YFA1013803), and by the National Natural Science Foundation of China (Grant Nos.\ 13004005, 12371043 and 12071137).


\begin{thebibliography}{ABCP10}


\bibitem[AB22]{amiotbruestle}
C.~Amiot, T.~Brüstle,
Derived equivalences between skew-gentle algebras using orbifolds,
Doc.\ Math.\ 27 (2022) 933--982.

\bibitem[AP21]{amiotplamondon}
C.~Amiot, P.-G.~Plamondon,
The cluster category of a surface with punctures via group actions,
Adv.\ Math.\ 389 (2021) 107884.

\bibitem[AP]{amiotplamondon2}
C.~Amiot, P.-G.~Plamondon,
Skew-group $A_\infty$-categories as Fukaya categories of orbifolds,
arXiv:2405.15466 (2024).

\bibitem[APS23]{amiotplamondonschroll}
C.~Amiot, P.-G.~Plamondon, S.~Schroll,
A complete derived invariant for gentle algebras via winding numbers and Arf invariants,
Selecta Math.\ 29 (2023) no.~30.

\bibitem[AL24]{annologvinenko}
B.~Anno, T.~Logvinenko,
Unbounded twisted complexes, 
J.\ Algebra 647 (2024) 794--822. 

\bibitem[AS87]{assemskowronski}
I.~Assem, A.~Skowroński,
Iterated tilted algebras of type $\widetilde A_n$,
Math.\ Z.\ 195 (1987) 269--90.

\bibitem[AG08]{avellaalaminosgeiss}
D.~Avella-Alaminos, C.~Geiss,
Combinatorial derived invariants for gentle algebras,
J.\ Pure Appl.\ Algebra 212 (2008) 228--243.

\bibitem[BSW1]{barmeierschrollwang}
S.~Barmeier, S.~Schroll, Z.~Wang,
Partially wrapped Fukaya categories of orbifold surfaces. 
arXiv:2407.16358 (2024).

\bibitem[BSW2]{barmeierschrollwang2}
S.~Barmeier, S.~Schroll, Z.~Wang,
Deformations of partially wrapped Fukaya categories of surfaces,
arXiv:2512.16354 (2025).

\bibitem[BW]{barmeierwang}
S.~Barmeier, Z.~Wang,
Deformations of path algebras of quivers with relations, forthcoming in Astérisque,
arXiv:2002.10001 (2020).

\bibitem[BM03]{bekkertmerklen}
V.~Bekkert, H.\,A.~Merklen,
Indecomposables in derived categories of gentle algebras,
Algebr.\ Represent.\ Theory 6 (2003) 285--302.

\bibitem[Boc21]{bocklandt}
R.~Bocklandt,
A gentle introduction to Homological Mirror Symmetry,
Cambridge University Press, Cambridge, 2021.

\bibitem[BK90]{bondalkapranov}
A.~Bondal, M.~Kapranov,
Enhanced triangulated categories, 
Mat.\ Sb.\ 181 (1990) 669--683.

\bibitem[BD17]{burbandrozd}
I.~Burban, Yu.~Drozd,
On the derived categories of gentle and skew-gentle algebras: homological algebra and matrix problems,
arXiv:1706.08358 (2017).

\bibitem[CK]{chokim}
C.-H.~Cho, K.~Kim,
Topological Fukaya category of tagged arcs,
arXiv:2404.10294 (2024).

\bibitem[Cho12]{choi}
S.~Choi,
Geometric structures on 2-orbifolds: Exploration of discrete symmetry,
MSJ Memoirs 27,
Mathematical Society of Japan, Tokyo, 2012.

\bibitem[DK18]{dyckerhoffkapranov}
T.~Dyckerhoff, M.~Kapranov,
Triangulated surfaces in triangulated categories,
J.\ Eur.\ Math.\ Soc.\ 20 (2018) 1473--1524.

\bibitem[FOOO09]{fukayaohohtaono}
K.~Fukaya, Y.-G.~Oh, H.~Ohta, K.~Ono,
Lagrangian intersection Floer theory: Anomaly and obstruction I, II, Chapter 10 available at \url{https://www.math.kyoto-u.ac.jp/%7Efukaya/Chapter10071117.pdf},
Studies in Advanced Mathematics 46,
American Mathematical Society, Providence, RI, 2009.

\bibitem[GPS20]{ganatrapardonshende1}
S.~Ganatra, J.~Pardon, V.~Shende,
Covariantly functorial wrapped Floer theory on Liouville sectors,
Publ.\ Math.\ Inst.\ Hautes Études Sci.\ 131 (2020) 73--200.

\bibitem[GPS24]{ganatrapardonshende2}
S.~Ganatra, J.~Pardon, V.~Shende,
Sectorial descent for wrapped Fukaya categories,
J.\ Amer.\ Math.\ Soc.\ 37 (2024) 499--635.

\bibitem[GdlP99]{geisspena}
C.~Geiss, J.\,A.~de la Pe\~{n}a, 
Auslander--Reiten components for clans, 
Boll.\ Soc.\ Mat.\ Mexicana 5 (1999) 307--326.

\bibitem[GLS16]{geisslabardinifragososchroeer}
C.~Geiß, D.~Labardini-Fragoso, J.~Schröer,
The representation type of Jacobian algebras,
Adv.\ Math.\ 290 (2016) 364--452.

\bibitem[HKK17]{haidenkatzarkovkontsevich}
F.~Haiden, L.~Katzarkov, M.~Kontsevich,
Flat surfaces and stability structures,
Publ.\ Math.\ Inst.\ Hautes Études Sci.\ 126 (2017) 247--318.

\bibitem[JSW]{jinschrollwang}
H.~Jin, S.~Schroll, Z.~Wang,
A complete derived invariant and silting theory for graded gentle algebras,
arXiv:2303.17474 (2023).

\bibitem[Kel01]{keller0}
B.~Keller,
Introduction to $A$-infinity algebras and modules, 
Homology Homotopy Appl.\ 3 (2001) 1--35.

\bibitem[Kel03]{keller03}
B.~Keller,
Derived invariance of higher structures on the Hochschild complex,
available at \url{https://webusers.imj-prg.fr/~bernhard.keller/publ/dih.pdf}, 2003.


\bibitem[Kim24]{kim}
K.~Kim,
Topological Fukaya categories of tagged arc systems and derived-tame algebras,
PhD thesis,
Seoul National University, Seoul, 2024.

\bibitem[Kon94]{kontsevich1}
M.~Kontsevich,
Homological algebra of mirror symmetry,
in Proceedings of the International Congress of Mathematicians (Zürich 1994), Vol.~I, 120--139,
Birkhäuser, Basel, 1995.

\bibitem[Kon09]{kontsevich2}
M.~Kontsevich,
Symplectic geometry of homological algebra,
Mathematische Arbeitstagung (Bonn 2009),
available at \url{https://archive.mpim-bonn.mpg.de/id/eprint/1536}, Max Planck Institute for Mathematics, Bonn, 2009.

\bibitem[Lab09]{labardini}
D.~Labardini-Fragoso,
Quivers with potentials associated to triangulated surfaces,
Proc.\ Lond.\ Math.\ Soc.\ 98 (2009) 797--839.

\bibitem[LSV22]{labardinifragososchrollvaldivieso}
D.~Labardini-Fragoso, S.~Schroll, Y.~Valdivieso,
Derived categories of skew-gentle algebras and orbifolds,
Glasg.\ Math.\ J.\ 64 (2022) 649--674.

\bibitem[Lef03]{lefevre}
K.~Lefèvre-Hasegawa,
Sur les A$_\infty$-catégories,
PhD thesis, available as arXiv:math/0310337,
Université Paris 7, Paris, 2003.

\bibitem[LP20]{lekilipolishchuk2}
Y.~Lekili, A.~Polishchuk,
Derived equivalences of gentle algebras via Fukaya categories,
Math.\ Ann.\ 376 (2020) 187--225.

\bibitem[OPS]{opperplamondonschroll}
S.~Opper, P.-G.~Plamondon, S.~Schroll,
A geometric model for the derived category of gentle algebras,
arXiv:1801.09659 (2018).

\bibitem[OZ22]{opperzvonareva}
S.~Opper, A.~Zvonareva,
Derived equivalence classification of Brauer graph algebras,
Adv.\ Math.\ 402 (2022) 108341.

\bibitem[PW]{palmerwoodward}
J.~Palmer, C.~Woodward,
Invariance of immersed Floer cohomology under Lagrangian surgery, forthcoming in Quantum Topol.,
arXiv:1903.01943 (2019).

\bibitem[QZZ]{qiuzhangzhou}
Y.~Qiu, C.~Zhang, Y.~Zhou,
Two geometric models for graded skew-gentle algebras,
arXiv:2212.10369 (2022).

\bibitem[QZ17]{qiuzhou}
Y.~Qiu, Y.~Zhou,
Cluster categories for marked surfaces: punctured case,
Compositio Math.\ 153 (2017) 1779--1819.

\bibitem[Sch08]{schiffler}
R.~Schiffler,
A geometric model for cluster categories of type $\mathrm D_n$,
J.\ Algebr.\ Comb.\ 27 (2008) 1--21.

\bibitem[SZ03]{schroeerzimmermann}
J.~Schröer, A.~Zimmermann,
Stable endomorphism algebras of modules over special biserial algebras,
Math.\ Z.\ 244 (2003) 515--530.

\bibitem[Sch15]{schroll}
S.~Schroll,
Trivial extensions of gentle algebras and Brauer graph algebras,
J.\ Algebra 444 (2015) 183--200.

\bibitem[Sch]{schrollicm}
S.~Schroll,
On geometric models in representation theory,
to appear in Proceedings of the International Congress of Mathematicians (Philadelphia 2026),
arXiv:2601.14396 (2026).

\bibitem[Sei02]{seidel1}
P.~Seidel,
Fukaya categories and deformations,
in Proceedings of the International Congress of Mathematicians (Beijing 2002), Vol.\ II, 351--360,
Higher Ed.\ Press, Beijing, 2002.

\bibitem[Sei08]{seidel2}
P.~Seidel,
Fukaya categories and Picard--Lefschetz theory,
European Mathematical Society, Zürich, 2008.

\end{thebibliography}
\end{document}